\newtheorem{lemma}{Lemma}[section]
\newtheorem{theorem}[lemma]{Theorem}
\newtheorem*{theorem*}{Theorem}
\newtheorem{claim}{Claim}[section]
\newtheorem{corollary}[lemma]{Corollary}
\newtheorem{proposition}[lemma]{Proposition}
\newtheorem*{proposition*}{Proposition}
\newtheorem*{problem*}{Problem}
\theoremstyle{definition}
\newtheorem*{claim*}{Claim}
\newtheorem{definition}{Definition}
\newtheorem{remark}{Remark}
\newcommand{\N}{{\mathbb N}}
\newcommand{\Z}{{\mathbb Z}}
\newcommand{\CA}{{\mathcal A}}
\newcommand{\CB}{{\mathcal B}}
\newcommand{\CM}{{\mathcal M}}
\newcommand{\BT}{\boldsymbol{\tau}}
\title{Asymptoticity, automorphism groups and strong orbit equivalence}
\author{Haritha Cheriyath}
\address{Centro de Modelamiento Matemático (CNRS IRL2807)\\Universidad de Chile\\Santiago, Chile}
\email{hcheriyath@cmm.uchile.cl, harithacheriyath@gmail.com}
\author{Sebasti\'an Donoso}
\address{Departamento de Ingenier\'ia Matem\'atica and Centro de Modelamiento Matemático (CNRS IRL2807)\\Universidad de Chile\\Santiago, Chile}
\email{sdonosof@uchile.cl}
\thanks{Both authors were partially funded by Centro de Modelamiento Matemático (CMM) FB210005, BASAL funds for centers of excellence from ANID-Chile. The second author was partially funded by ANID/Fondecyt/1241346.}
\subjclass[2020]{37B05, 37B10 (Primary); 37B40 (Secondary)}
\keywords{ Minimal systems, strong orbit equivalence, automorphism groups,  subshifts, asymptotic pairs, topological entropy}
\begin{document}
\begin{abstract}
   Given any strong orbit equivalence class of minimal Cantor systems and any cardinal number that is finite, countable, or the continuum, we show that there exists a minimal subshift within the given class whose number of asymptotic components is exactly the given cardinal. For finite or countable ones, we explicitly construct such examples using $\mathcal{S}$-adic subshifts. We derived the uncountable case by showing that any topological dynamical system with countably many asymptotic components has zero topological entropy. We also construct systems with arbitrarily high subexponential word complexity with only one asymptotic class. We deduce that within any strong orbit equivalence class, there exists a subshift whose automorphism group is isomorphic to $\mathbb{Z}$. 
\end{abstract}

\maketitle

\section{Introduction}
Two dynamical systems are said to be orbit equivalent if there exists a homeomorphism between the phase spaces that induces a one-to-one correspondence between their orbits. They are strong orbit equivalent if they are orbit equivalent and the cocycle map (precisely defined in \cref{sec:SOE}) is continuous everywhere except at most at one point. These definitions are weaker than topological conjugacy and define equivalence relations on the class of topological dynamical systems. In this article, as in many others in the field, we focus on minimal Cantor systems. This is because strong orbit equivalence essentially becomes topological conjugacy otherwise (see, for instance, \cite{Giordano_Putman_Skau_Topological_orbit_equiv_crossed_products:1995, Boyle_Tomimaya_bounded_orb_eq:1998}). Furthermore, in the Cantor case, we can use the machinery of Bratteli-Vershik representations to give a characterization of when two systems are strong orbit equivalent ~\cite{GPS95}.
Several works have been done on exploring the dynamical invariants that are preserved under this equivalence.
For instance, it was proved in~\cite{HPS92} that the set of invariant probability measures is preserved under strong orbit equivalence. On the other hand, topological entropy does not impose any constraints on two systems being strongly orbit equivalent. In fact, within
every strong orbit equivalence class, there exist systems with any non-negative numbers as their entropies (see~\cite{BH94,Ormes97,Sugisaki03,Sugisaki07}). Talking about eigenvalues of dynamical systems under strong orbit equivalence, it was proved that the subgroup of rational continuous eigenvalues is an invariant; however, continuous or measurable eigenvalues may vary under strong orbit equivalence~\cite{Ormes97,CDP16,DFM18}. In the case of subshifts, the complexity functions in the context of this equivalence are also studied. For instance, for any given strong orbit equivalence class, there exists a subshift with arbitrarily low superlinear complexity~\cite{CD20}.

In this article, we explore the relationship of two other conjugacy invariants within the framework of strong orbit equivalence - automorphism groups and asymptotic components. The automorphism group of a dynamical system $(X,T)$ is the group, under composition, of all homeomorphisms on $X$ that commute with the transformation $T$. They have been the subject of extensive study, especially in the context of subshifts of finite type and minimal subshifts. In the context of subshifts, the automorphism groups of a full shift~\cite{H69} or subshifts of finite type~\cite{BLR88} are shown to be profoundly large, containing copies of finite groups, countably many copies of $\Z$, free groups, among others. On the other hand, systems with low complexity exhibit a restricted automorphism group. In particular, systems with nonsuperlinear complexity have the automorphism group that is virtually $\Z$ (\cite{CK15,DDMP16}.  We refer to~\cite{CK16,S17,DDMP17} and the references therein for more work in this direction. 
%To the best of our knowledge, this article serves as the first attempt at characterizing automorphism groups under strong orbit equivalence, except for an example mentioned in~\cite{CDP16}, where the automorphism group is not invariant under orbit equivalence.
In this article, we explore, in particular, how small an automorphism group can be within a strong orbit equivalence class. It is worth mentioning that it is not always possible to find subshifts with nonsuperlinear complexity inside a strong orbit equivalence class \footnote{This is because in a class containing a subshift of nonsuperlinear complexity, the dimension group (an invariant of strong orbit equivalence) is an abelian group of finite rational rank.}. Consequently, complexity arguments alone are insufficient for obtaining small automorphism groups. Instead, we employ the concept of asymptotic components, an essential tool for studying the automorphism groups of subshifts, and examine it in detail.
%One of the main implications of our result (Corollary~\ref{cor:aut}) is that the
%automorphism groups are not invariant under strong orbit equivalence. %\textcolor{blue}{However, it is not true that strong orbit equivalence puts no restrictions at all on automorphism groups. (add)}.
Our main theorem (Theorem~\ref{thm:main1}) concludes that we can construct subshifts with asymptotic components of any cardinality - finite, countable, or uncountable, within a strong orbit equivalence class. This result uses the combinatorial structure of the subshifts to a great extent. 
The method of studying automorphism groups by exploring their asymptotic components has been seen in some earlier works, especially for subshifts with low complexity~\cite{CK15,CQY16,DDMP16,DDMP21}. Elements of the automorphism group induce permutations on the set of asymptotic components, and only the powers of $T$ fix these permutations. Therefore, a system with fewer asymptotic components will have a smaller automorphism group. This fact, together with Theorem~\ref{thm:main1}, allows us to construct a subshift whose automorphism group is isomorphic to $\Z$ within any strong orbit equivalence class. This system is immediately proven to have zero entropy using Theorem~\ref{thm:main3}, which states that positive entropies must have uncountably many asymptotic components. As a corollary to our main result, we also show that having a trivial automorphism group imposes no restrictions on its set of invariant measures (Corollary~\ref{cor:invmeas}). We also construct subshifts with smaller automorphism groups and arbitrarily low superlinear complexity or arbitrarily large subexponential complexity. 
%Since the systems with positive topological entropy have uncountably many asymptotic components, whenever we attempted to construct subshifts with smaller automorphism groups within a given strong orbit equivalence class, we always considered systems with zero entropy.
We also prove similar results for the strong orbit equivalence of Toeplitz shifts. Note that Toeplitz shifts satisfy the ``equal path number property'', which is not preserved under the constructions described in Section~\ref{Sec:finite} and Section~\ref{Sec:countable}. Consequently, we cannot guarantee that applying the same construction will eventually yield a Toeplitz shift. Therefore, we employ a different construction in this setup. Both constructions have their respective merits and demerits.

%\begin{question}
 %   Has this ever been asked if trivial automorphism group concludes anything about its invariant measures? 
%\end{question}

\subsection{Main results}
An asymptotic component consists of points that are asymptotic to a distinct point other than the elements in its orbits (the exact definition is given in Section~\ref{sec:asymclass}). 
As a preliminary result, we have the following theorem on the topological entropy of a system having at most countably many asymptotic components.

\begin{theorem}\label{thm:main3}
    Let $(X,T)$ be a topological dynamical system with at most countably many asymptotic components. Then $(X,T)$ has topological entropy zero.
\end{theorem}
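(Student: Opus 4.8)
The plan is to prove the contrapositive: if $(X,T)$ has positive topological entropy, then it has uncountably many asymptotic components. By the variational principle, fix an ergodic $T$-invariant measure $\mu$ with $h_\mu(T)>0$, and suppose toward a contradiction that there are only countably many asymptotic components $C_1,C_2,\dots$. Each $C_i$ is a $T$-invariant Borel set, so their union $U=\bigcup_i C_i$ is $T$-invariant; the classical theorem of Blanchard, Host and Ruette on asymptotic pairs in positive-entropy systems guarantees that $\mu$-almost every point is forward asymptotic to uncountably many other points, hence lies in a nontrivial component, giving $\mu(U)=1$. Since the $C_i$ are disjoint and $T$-invariant, ergodicity forces a single component $C=C_j$ to have full measure.

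First I would analyze the forward-asymptotic relation inside $C$. Chasing the chain of asymptotic and orbit moves that define $C$ shows that every forward-asymptotic class contained in $C$ is of the form $T^s[x_0]$ for a fixed class $[x_0]$ and some $s\in\Z$, so the forward-asymptotic classes in $C$ form the countable $T$-orbit of a single class. The assignment of each point to its class is thus a measurable, $T$-equivariant map from $C$ onto a countable set on which $T$ acts by a permutation; pushing $\mu$ forward gives an ergodic invariant probability measure there. Since a permutation of a countable set carries an ergodic invariant probability measure only on a finite orbit, there is an integer $k\ge1$ with $T^kx\sim x$ (forward asymptotic) for $\mu$-a.e.\ $x$, i.e.\ $d(T^{n+k}x,T^nx)\to0$ as $n\to\infty$ almost everywhere.

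Next I would convert this asymptotic periodicity into an entropy bound. Fix any finite partition $\mathcal{P}$ whose cells have $\mu$-null boundaries; then forward asymptoticity of $T^k x$ and $x$ implies that the future itinerary $(\mathcal{P}(T^n x))_{n\ge 0}$ is eventually periodic with period $k$ for almost every $x$. By Egorov's theorem there is a set $A$ with $\mu(A)\ge 1-\delta$ and a time $N$ such that for every $x\in A$ the itinerary is exactly $k$-periodic beyond time $N$. Consequently, for $n>N+k$ the restriction to $A$ of $\bigvee_{i=0}^{n-1}T^{-i}\mathcal{P}$ takes at most $|\mathcal{P}|^{N+k}$ values, a bound independent of $n$. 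A routine splitting of $H_\mu\big(\bigvee_{i=0}^{n-1}T^{-i}\mathcal{P}\big)$ over $A$ and its complement then yields $\tfrac1n H_\mu\big(\bigvee_{i=0}^{n-1}T^{-i}\mathcal{P}\big)\le \delta\log|\mathcal{P}| + o(1)$, so $h_\mu(T,\mathcal{P})\le \delta\log|\mathcal{P}|$. Letting $\delta\to0$ and taking the supremum over such partitions gives $h_\mu(T)=0$, contradicting $h_\mu(T)>0$.

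I expect the main obstacle to be the passage from \emph{many asymptotic pairs} to \emph{many components}: the Blanchard--Host--Ruette theorem only produces an abundance of asymptotic pairs almost everywhere, and the real work is to show that these pairs cannot all collapse into countably many $T$-invariant components without forcing the degenerate asymptotic periodicity $T^kx\sim x$ above. Care is also needed with the bookkeeping around the definition of components (their $T$-invariance and Borel measurability) and with choosing partitions with null boundaries so that asymptoticity translates cleanly into eventual agreement of itineraries.
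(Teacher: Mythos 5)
Your overall strategy coincides with the paper's: invoke Blanchard--Host--Ruette to show that almost every point (for an ergodic measure of positive entropy) lies in a nontrivial asymptotic component, use ergodicity to reduce to a single class, and extract an integer $k\ge 1$ with $T^kx\sim x$ almost everywhere, from which a contradiction with positive entropy must follow. Two remarks on the first half. Your reduction to $\mu(U)=1$ leans on the statement that a.e.\ point is asymptotic to \emph{uncountably many} points; the basic BHR result cited in the paper only gives at least one distinct asymptotic partner, and with only that, one must separately rule out that all partners of a.e.\ point lie in its own orbit (such points belong to no asymptotic component under the definition used here). The paper handles exactly this with the set $P=\{x : y\in\mathrm{Orb}_T(x)\text{ whenever }x\sim y\}$ and a recurrence argument showing $\mu(P)=0$. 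Your route via the orbit of a single asymptotic class $[x_0]$ and the nonexistence of an invariant probability measure for the shift on $\Z$ is a nice, correct alternative to the paper's use of Poincar\'e recurrence on the set $A_i'$ of points asymptotic to a fixed representative, granted the stronger input.

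The genuine gap is in the final entropy estimate. The claim that asymptoticity of $T^kx$ and $x$ forces the $\mathcal{P}$-itinerary of $x$ to be \emph{eventually exactly $k$-periodic} is not justified, and choosing cells with $\mu$-null boundaries does not repair it: $\mathrm{dist}(T^{n+k}x,T^nx)\to 0$ does not place $T^{n+k}x$ and $T^nx$ in the same cell when $T^nx$ comes closer to $\partial\mathcal{P}$ than their mutual distance, and an orbit can approach a null boundary infinitely often. Hence Egorov cannot be applied as described, and the bound ``at most $|\mathcal{P}|^{N+k}$ values on $A$'' collapses. This is patchable (e.g., use the ergodic theorem to bound the density of times $n$ with $T^nx$ in an $\epsilon$-neighborhood of $\partial\mathcal{P}$ by the measure of that neighborhood, and then count itineraries that are $k$-periodic off a set of small density), but there is a much shorter finish, which is essentially what the paper does: combine $T^kx\sim x$ with Poincar\'e recurrence. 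For a recurrent point, choosing $n_i$ with $T^{n_i}x\to x$ (in the time direction of the asymptoticity) gives $T^{n_i+k}x\to x$ and, by continuity, $T^{n_i+k}x\to T^kx$, so $T^kx=x$ almost everywhere; then $h_\mu(T)=\tfrac1k h_\mu(T^k)=0$, the desired contradiction, with no partition bookkeeping at all.
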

We show that \cref{thm:main3} is somehow optimal, in the sense that for any given subexponential function, there exists a subshift whose complexity dominates that function along some subsequence, while possessing only a single asymptotic component (\cref{thm:subexponential}).

Now we state our main result which gives the existence of systems with finitely, countably infinite, or uncountably many asymptotic components within any strong orbit equivalence class of a minimal Cantor system. 
\begin{theorem}\label{thm:main1}
    Let $(X,T)$ be a minimal Cantor system, then there exist subshifts that are strong orbit equivalent to $(X,T)$ and have $k$-many for any $k\ge 1$, countably infinite or uncountably many asymptotic components. 
\end{theorem}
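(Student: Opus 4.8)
The plan is to treat the finite, countably infinite, and uncountable regimes by different means, exploiting the fact (Giordano--Putnam--Skau, \cite{GPS95}) that the strong orbit equivalence class of $(X,T)$ is determined entirely by its dimension group, equivalently by the \emph{unordered} telescoped Bratteli diagram underlying any Bratteli--Vershik representation. I would first fix such a representation of $(X,T)$ and reinterpret its diagram as a directive sequence of morphisms $(\sigma_n)_{n\ge 1}$ whose incidence matrices are the transition matrices of the diagram. The essential leverage is that the dimension group ignores the order in which letters are written inside each image $\sigma_n(a)$, so I am free to choose these orderings --- and to telescope or refine --- without leaving the strong orbit equivalence class, while the orderings are precisely what govern the dynamics and, in particular, the asymptotic structure.

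For the finite case I would engineer the orderings so that the gluings of consecutive substituted blocks produce exactly $k$ distinct asymptotic classes. Concretely, in the Bratteli--Vershik picture a forward-asymptotic pair arises from two infinite paths that coincide on a tail but diverge across the maximal and minimal edges at a single level; in the $\mathcal{S}$-adic picture this corresponds to the finitely many ways that the right end of one block can abut the left end of another inside admissible words. By designating a controlled set of letters permitted at these boundaries I would force exactly $k$ such branchings, and then verify that no others occur. To ensure that the object produced is a genuine minimal subshift and not merely a Cantor system, I would arrange the directive sequence to be primitive, proper, and recognizable, so that the Vershik map is expansive and hence topologically conjugate to a subshift. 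The countably infinite case follows from the same scheme by letting the number of admissible boundary branchings grow with the level, so that the limit system carries countably many distinct asymptotic classes.

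The uncountable case I would obtain indirectly rather than by an explicit construction. The realization results of \cite{BH94,Ormes97,Sugisaki03,Sugisaki07} guarantee that the strong orbit equivalence class of $(X,T)$ contains a subshift of positive topological entropy. By the contrapositive of \cref{thm:main3}, a system with at most countably many asymptotic components has zero entropy; hence this subshift must have more than countably many asymptotic components. Since a subshift is a compact metrizable space of cardinality at most that of the continuum, it has at most continuum many points and therefore at most continuum many asymptotic classes; combined with the previous sentence, it has exactly uncountably many, furnishing the required representative.

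The main obstacle is the exactness of the count in the finite and countable constructions. Producing \emph{at least} $k$ asymptotic classes is straightforward, but ruling out \emph{accidental} asymptotic pairs created by concatenations across infinitely many levels is delicate: I expect this upper bound to require a recognizability argument showing that every asymptotic pair is already detected at a single finite level and therefore coincides with one of the prescribed boundary branchings. Balancing this control against the simultaneous constraints of preserving the dimension group, retaining minimality, and guaranteeing expansiveness is where the construction must be handled with care.
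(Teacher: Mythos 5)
Your proposal follows essentially the same route as the paper: the finite and countably infinite cases are handled by choosing orderings on a (telescoped) Bratteli diagram so that the induced directive sequence of morphisms controls the bifurcation points of asymptotic pairs, with recognizability used to show every asymptotic pair is detected at a finite level (the paper's Properties $(P_k)$ and $(P_\infty)$ and the surrounding lemmas), while the uncountable case is obtained exactly as in the paper from Sugisaki's positive-entropy realization combined with the contrapositive of Theorem~\ref{thm:main3}. The delicate upper-bound step you flag is precisely where the paper invests its technical effort, but your identification of the mechanism (prescribed boundary branchings plus level-by-level desubstitution) matches the paper's argument.
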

Theorem~\ref{thm:main1} is divided into three results, Theorem~\ref{thm:kmany}, Theorem~\ref{thm:count} and Corollary~\ref{cor:uncount}. Here, the case of uncountably many asymptotic components is a consequence of Theorem~\ref{thm:main3}. 

As a corollary, we also obtain subshifts with a trivial automorphism group within a strong orbit equivalence class of minimal Cantor systems.  

\begin{corollary}\label{cor:aut}
     Let $(X,T)$ be a minimal Cantor system. Then there exists a minimal subshift $(X',S)$ that is strong orbit equivalent to $(X,T)$ and $\text{Aut}(X',S)$ is generated by $S$.
\end{corollary}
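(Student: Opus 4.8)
The plan is to specialize Theorem~\ref{thm:main1} to the smallest possible number of asymptotic components. Concretely, I would invoke Theorem~\ref{thm:kmany} with $k=1$ to produce a minimal subshift $(X',S)$ that is strong orbit equivalent to $(X,T)$ and has exactly one asymptotic component. Since $(X,T)$ is a minimal Cantor system it is infinite, so $(X',S)$ is aperiodic and $\langle S\rangle\cong\Z$ sits inside $\mathrm{Aut}(X',S)$. The whole task is therefore to establish the reverse inclusion $\mathrm{Aut}(X',S)\subseteq\langle S\rangle$.

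The engine of the argument is that automorphisms respect the asymptotic structure. By the Curtis--Hedlund--Lyndon theorem every $\phi\in\mathrm{Aut}(X',S)$ is a uniformly continuous homeomorphism commuting with $S$, so from $d(S^{n}\phi z,S^{n}\phi w)=d(\phi S^{n}z,\phi S^{n}w)$ one sees that $\phi$ carries asymptotic pairs to asymptotic pairs and hence permutes the set of asymptotic components. Fix a nontrivial asymptotic pair $(x,y)$ generating the unique component. The image $(\phi x,\phi y)$ is again a nontrivial asymptotic pair (nontrivial because $\phi$ is injective), so it lies in the same component; thus $\phi x$ and $\phi y$ each belong to $\mathrm{orbit}(x)\cup\mathrm{orbit}(y)$. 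Granting for the moment that $\phi$ preserves $\mathrm{orbit}(x)$ (rather than sending it to $\mathrm{orbit}(y)$), we obtain $\phi x=S^{n}x$ for some $n\in\Z$. Replacing $\phi$ by $\psi:=S^{-n}\phi$, we get an automorphism fixing $x$; then $\psi(S^{k}x)=S^{k}\psi(x)=S^{k}x$ for all $k$, so $\psi$ fixes the orbit of $x$ pointwise. By minimality this orbit is dense, and continuity forces $\psi=\mathrm{id}$. Hence $\phi=S^{n}\in\langle S\rangle$, giving $\mathrm{Aut}(X',S)=\langle S\rangle$.

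The step that requires care---and which I expect to be the main obstacle---is ruling out the orbit-swapping alternative $\phi x=S^{n}y$, $\phi y=S^{n}x$, which would yield an extra involution rather than a power of $S$. Nothing in the general theory excludes such a ``flip'', so this is exactly where the explicit combinatorial structure of the $\mathcal S$-adic pair $(x,y)$ produced in Theorem~\ref{thm:kmany} must be used: the two tails branching at the asymptotic pair carry distinguishable data (for instance the coordinate at which the two sequences first differ and the letters occurring there), and any sliding block code preserves this distinction. Consequently no automorphism can interchange $\mathrm{orbit}(x)$ and $\mathrm{orbit}(y)$, every $\phi$ falls into the orbit-preserving case treated above, and the reduction to $\psi=\mathrm{id}$ goes through, completing the proof.
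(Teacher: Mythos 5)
Your overall route coincides with the paper's: apply Theorem~\ref{thm:kmany} with $k=1$ to obtain a minimal subshift strong orbit equivalent to $(X,T)$ with a single asymptotic component, and then argue that a single asymptotic component forces $\mathrm{Aut}(X',S)=\langle S\rangle$. The paper disposes of the second step in one line by invoking Proposition~\ref{prop:aut} (which rests on \cite[Corollary 3.3]{DDMP16}), whereas you attempt to prove it from scratch. Your reduction to two cases (orbit-preserving versus orbit-swapping) is correct, and the orbit-preserving case is handled properly. The problem is the orbit-swapping case, which you yourself identify as the crux and then do not actually prove: the assertion that ``the two tails carry distinguishable data \dots and any sliding block code preserves this distinction'' is not an argument. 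An automorphism is under no obligation to preserve the letters occurring at the bifurcation, nor the coordinate of first difference (both can change under a sliding block code), so the specific combinatorics of the pair produced in Theorem~\ref{thm:kmany} cannot be invoked in the way you suggest without substantial further work. As written, this is a genuine gap.

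Moreover, your diagnosis that ``nothing in the general theory excludes such a flip'' is mistaken; the flip is excluded by a soft argument valid for any infinite minimal subshift and any left-asymptotic pair, which is exactly why Proposition~\ref{prop:aut} is citable here. Suppose $\eta\in\mathrm{Aut}(X',S)$ satisfies $\eta(x)=S^{a}y$ with $x\sim y$ and $y\notin\mathrm{Orb}(x)$. Replacing $\eta$ by $S^{-a}\eta$ we may assume $\eta(x)=y$. Pick a convergent subsequence $z=\lim_{j}S^{-n_j}x$ with $n_j\to\infty$; since $\mathrm{dist}(S^{-n}x,S^{-n}y)\to 0$ we also have $z=\lim_{j}S^{-n_j}y$, hence $\eta(z)=\lim_{j}S^{-n_j}\eta(x)=\lim_{j}S^{-n_j}y=z$. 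Then $\eta$ fixes the dense orbit of $z$ pointwise and is therefore the identity, contradicting $\eta(x)=y\neq x$. In particular no automorphism can carry $\mathrm{Orb}(x)$ to $\mathrm{Orb}(y)$, so only your orbit-preserving case occurs and the proof closes. (You also tacitly use that the unique component consists of exactly the two orbits of $x$ and $y$; that is Lemma~\ref{Lemma:twocomp}.) With this lemma supplied---or by simply citing Proposition~\ref{prop:aut} as the paper does---your argument becomes complete.
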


There are also several other interesting corollaries of our main theorem, most of which are mentioned in Section~\ref{Sec:finite}. We also prove the following result for the strong orbit equivalence class of the Toeplitz subshift. 

\begin{theorem}\label{thm:main2}
    Let $(X,T)$ be a minimal Cantor system that is strong orbit equivalent to a Toeplitz subshift. Then there exists a Toeplitz subshift $(X',S)$ that is strong orbit equivalent to $(X,T)$ and has exactly $k$, countably, or uncountable many asymptotic components.  
\end{theorem}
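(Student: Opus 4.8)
The plan is to work entirely inside the Bratteli--Vershik picture, exploiting the fact that strong orbit equivalence of minimal Cantor systems is exactly isomorphism of ordered dimension groups \cite{GPS95}, while the Toeplitz property is captured by the \emph{equal path number property} recalled in the introduction: a minimal Cantor system is conjugate to a Toeplitz subshift precisely when it admits a properly ordered Bratteli--Vershik model in which, at every level, all vertices receive the same number of paths from the top. Since $(X,T)$ is assumed strong orbit equivalent to a Toeplitz subshift $(Y,S_0)$, their dimension groups agree, and that common group is realized by some ordered diagram $B$ having the equal path number property. The entire construction then consists in modifying $B$ --- by telescoping and by uniform edge/symbol manipulations --- in three controlled ways so that the resulting diagram (i) has the same dimension group, hence stays in the strong orbit equivalence class, (ii) retains the equal path number property, hence remains Toeplitz, and (iii) realizes the prescribed number of asymptotic components. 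As with Theorem~\ref{thm:main1}, I would split the argument into the finite, countably infinite, and uncountable cases.

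For the finite case ($k$ components) I would first telescope $B$ to make the ordering clean, and then use the fact that asymptotic pairs in a Bratteli--Vershik system arise from pairs of infinite paths that become cofinal but separate along the maximal/minimal edges sitting over a common vertex, so that the number of asymptotic components is governed by the combinatorics of the ordering near the maximal and minimal paths. I would then re-order the edges, splitting symbols \emph{uniformly} across all vertices of each level so as not to disturb the path counts, in order to pin down exactly $k$ cofinal families of such path pairs. The delicate point is that each modification must be performed symmetrically over all vertices of a level, since that is precisely what the equal path number property demands; telescoping and uniform splitting both respect it, and neither changes the dimension group, so (i)--(iii) hold simultaneously.

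For the countably infinite case I would iterate the finite construction along a sequence of levels tending to infinity, introducing one new asymptotic component at each chosen level while freezing those already created below it, and then pass to the inverse limit. The care needed here is to ensure that the newly created components do not accumulate into extra, unintended asymptotic tails in the limit; this is arranged by spacing the insertions at levels far enough apart and, again, performing them uniformly over vertices.

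The uncountable case is a consequence of Theorem~\ref{thm:main3}: since any system with at most countably many asymptotic components has zero entropy, it suffices to exhibit a \emph{positive-entropy} Toeplitz subshift inside the strong orbit equivalence class, and such a system automatically has uncountably many asymptotic components. Positive entropy can be forced by thickening $B$ (raising edge multiplicities uniformly, which preserves the equal path number property) and then telescoping to restore the original dimension group, using the entropy-realization techniques of Sugisaki \cite{Sugisaki03,Sugisaki07}. The main obstacle throughout is the tension between the equal path number property --- which forces all vertices of a level to look alike --- and the need to create a precisely controlled set of asymptotic pairs; this is exactly why the $\mathcal{S}$-adic constructions of Section~\ref{Sec:finite} and Section~\ref{Sec:countable}, which break that property, cannot be reused here, and why every edge manipulation has to be carried out symmetrically across each level.
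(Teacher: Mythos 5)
Your high-level strategy is the right one and matches the paper's: since re-ordering and telescoping leave the unordered Bratteli diagram (hence the strong orbit equivalence class, by Proposition~\ref{prop:soe}) and the equal path number property untouched, the whole problem reduces to choosing an ordering on a fixed Toeplitz diagram that realizes the prescribed number of asymptotic components, with the uncountable case delegated to a positive-entropy Toeplitz representative plus Theorem~\ref{thm:main3}. That much is correct and is exactly how the paper frames the proof.

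The gap is in the finite and countable cases, where all the work lies. You say you will ``re-order the edges, splitting symbols uniformly across all vertices of each level so as to pin down exactly $k$ cofinal families of such path pairs,'' but this is not an argument: the splitting operation is never defined, and it is not checked that it preserves either the common-intertwining condition or the equal row sums. More importantly, you never confront the specific obstruction that forces the paper to introduce Definition~\ref{def:almostlr}. Property $(P_k)$ needs the morphism read on the diagram to be injective on symbols, which is arranged in Section~\ref{Sec:finite} by giving the letters $v_{i,n+1}$ with $i>k+1$ the prefixes $v_{1,n}^i v_{2,n}$; that requires the $(i,1)$ entry of the incidence matrix to grow with $i$ and therefore destroys the equal row sum property. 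Since the vertex sets cannot be enlarged as in Lemma~\ref{lemma:A>V} without leaving the Toeplitz world, the paper instead assigns the prefix type of $\tau_n(v_{i,n+1})$ according to the residue of $i$ modulo $k+1$, accepts that $\tau_n$ is no longer injective on symbols, and recovers recognizability and injectivity on sequences through the partition into the classes $D_{s,n}$ (Proposition~\ref{prop:conjugacy3}, Remark~\ref{Rem:D_s}, Lemma~\ref{lemma:followertop}). Without this device, or an equivalent one, your re-ordering cannot be shown to produce exactly $k$ components: your heuristic that asymptotic pairs ``separate along the maximal/minimal edges over a common vertex'' is not how the count is actually carried out (the paper counts bifurcation signals in the conjugate $\mathcal{S}$-adic subshift), and it does not by itself rule out extra components coming from distinct letters with identical images. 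The same issue, with $k+1$ replaced by a level-dependent modulus $n+1$, affects your countably infinite case.
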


\subsection{Organization of the paper}
In Section \ref{sec:prelim}, we provide a comprehensive overview of the necessary background and preliminaries. This includes an introduction to minimal Cantor systems, subshifts, Bratteli-Vershik systems and strong orbit equivalence. 
Section \ref{sec:asymclass} is devoted to the proof of Theorem \ref{thm:main3}.
In Section \ref{Sec:finite}, we focus on proving several auxiliary lemmas in order to provide the proof of Theorem~\ref{thm:main1} for the case of finitely many asymptotic components. Similar thing for the case of countably many asymptotic components is done in Section~\ref{Sec:countable}. Finally, we look at the asymptotic components of systems within the strong orbit equivalence class of the Toeplitz shift in Section~\ref{Sec:Toeplitz}.

\section{Preliminaries}\label{sec:prelim}

%\noindent Things to add
%\begin{itemize}
 %   \item Prelims on subshifts, S-adic shift, recognizability for directive morphisms
  %  \item Bratteli diagram, simple Bratteli diagram, Telescoping, Vershik map, BV theorem
   % \item Automorphism group, asymptotic class, connection between the two.
%\end{itemize}

\subsection{Minimal Cantor systems}
We say that the pair $(X,T)$ is a \emph{topological dynamical system}, or simply a {\em system}, if $X$ is a compact metric space and $T\colon X\to X$ is a homeomorphism. A point $x\in X$ is said to be \emph{periodic} if the orbit of $x$ under $T$, defined by $\text{Orb}_T(x):=\{T^n(x)\mid n\in\Z\}$, is finite. Otherwise it is said to be \emph{aperiodic}. A system $(X,T)$ is said to be \emph{aperiodic} if every point in $X$ is aperiodic and it is said to be \emph{minimal} if for every $x\in X$, $\text{Orb}_T(x)$ is dense in $X.$ 
A \emph{minimal Cantor system} is a minimal system $(X,T)$ where $X$ is a Cantor space, that is, $X$ is a non-empty totally disconnected compact metric space without isolated points.

Two dynamical systems $(X_1,T_1)$ and $(X_2,T_2)$ are said to be \emph{topologically conjugate} if there exists a homeomorphism $\phi\colon X_1\to X_2$ such that $\phi\circ T_1=T_2\circ \phi$. In such a case, $\phi$ is said to be the \emph{conjugacy map}. 
%If $\phi$ is a continuous surjective map, we say that $(X_2,T_2)$ is a \emph{factor} of $(X_1,T_1)$ and that $\phi$ is a \emph{factor map}. 

%\textcolor{blue}{Let $(X,\CB,\mu)$ be a probability space and $T:X\to X$ be measure preserving. Then, we denote by the tuple $(X,\CB,\mu,T)$, a \emph{measurable dynamical system}. For two measurable dynamical systems $(X_i,\CB_i,\mu_i,T_i)$, $i=1,2$, if there exists a measure preserving map $\pi:X_1\to X_2$ such that $\pi\circ T_1=T_2\circ\pi$, we say that $X_2$ is a \emph{factor} of $X_1$ and that $\pi$ is a \emph{measurable factor map}.}
%\textcolor{red}{measurable conjugacy and factor map}

\subsection{Symbolic dynamical systems}
 An {\em alphabet} $\CA$ consists of a finite set of symbols. A \emph{word} is a finite sequence (string) with symbols from $\CA$. The empty word is denoted by $\epsilon$. 
 If $w=w_1\dots w_n$ is a word, then $|w|=n$ denotes the length of $w$. Let $\CA^*$ denote the collection of all finite words with symbols from $\CA$, including the empty word. For two words $u=u_1\dots u_s,w=w_1\dots w_t\in\CA^*$, we define their \emph{concatenation} as $uw:=u_1\dots u_sw_1\dots w_t$.
 For two words $u,w$, we say that $u$ is a \emph{subword} of $w$, denoted as $u\prec w$, if there exist $t,v\in \CA^*$ such that $w=tuv$. When $t=\epsilon$, we say that $u$ is \emph{prefix} of $w$ and when $v=\epsilon$, we say that $u$ is a \emph{suffix} of $w$.
 
 Let $\CA^{\Z}$ denote the collection of all bi-infinite sequences with symbols from $\CA$. When $\CA$ is given the discrete topology, the space $\CA^{\Z}$ 
 is a compact metric space with respect to the product topology. 
 The metric on $\CA^\Z$ is given as follows. For $x=(x_i)_{i\in\Z},y=(y_i)_{i\in\Z}\in \CA^\Z$, $\text{dist}(x,y)=2^{-n}$ where $n=\min\{|j|:x_j\ne y_j\}$ and $\text{dist}(x,x)=0$. For $x=(x_i)_{i\in\Z}\in \CA^\Z$, we denote it as $\dots x_2x_1.x_0x_1x_2\dots$ in order to specify the zeroth coordinate.   
 Define the \emph{left shift map} $S:\CA^\Z\to \CA^\Z$ as $(S(x))_i=x_{i+1}$ where $x=(x_i)_{i\in\Z}$. A \emph{subshift} $X\subseteq \CA^\Z$ is a closed $S$-invariant subset, with the induced topology. Clearly, $(X,S)$ is a topological dynamical system. When $(X,S)$ is infinite and minimal, $X$ is a Cantor set. For a subshift, the underlying dynamics (map) is already understood to be the left shift map and hence we will only mention the map $S$ whenever necessary, otherwise we just denote a subshift to be $X$.

 Let $X$ be a subshift and $x=(x_i)_{i\in \Z}$. For $i<j$, we denote $x_{[i,j]}=x_ix_{i+1}\dots x_j$, $x_{[i,j)}=x_ix_{i+1}\dots x_{j-1}$, $x_{(i,j]}=x_{i+1}x_{i+2}\dots x_j$, $x_{[i,\infty)}=x_ix_{i+1}\dots$ and $x_{(-\infty,j]}=\dots x_{j-1}x_j$.

 A word $w\in\CA^*$ is said to be \emph{allowed} in $X$ if $w$ is a subword of a sequence in $X$, that is, there exists a sequence $x\in X$ and $i\le j$ such that $w=x_{[i,j]}$. Let $\mathcal{L}_n(X)$ denote the collection of all allowed words of length $n$ in $X$. We define $\mathcal{L}(X)=\bigcup_{n\ge 1}\mathcal{L}_n(X)$ to be the \emph{language} of $X$. Recall that the \emph{topological entropy} of a subshift $X$ is given by 
  \[
  h(X)=\lim_{n\to\infty}\frac{1}{n}\ln\#(\mathcal{L}_n(X)),
  \]
where $\#(.)$ denotes the cardinality of a set. The function $p_x:\N_0\to\N_0$ defined as $p_X(n)=\#(\mathcal{L}_n(X))$ is the complexity function of $X$. 

 For a word $w\in\mathcal{L}(X)$ and $n\in\Z$, we denote $C_{w,n}$ to be the \emph{cylinder} based at $w$ and positioned at $n$. That is,
 \[
 C_{w,n}=\{x\in X\mid x_{[n,n+|w|-1]}=w\}.
 \]
 Cylinders are clopen subsets of $X$ and they form a countable basis for the topology on $X$.
 
\subsubsection{$\mathcal{S}$-adic subshifts}
 Let $\CA,\CB$ be two finite alphabets and $\tau\colon \CA^*\to\CB^*$ be a morphism. We say that $\tau$ is \emph{primitive} if for every $a\in\CA$, all symbols $b\in \CB$ occur in $\tau(a)$ and is \emph{left (or right resp.) proper} if there exists $\ell\in \CB$ (or $r\in\CB$ resp.) such that $\tau(a)$ starts (or ends resp.) with $\ell$ (or $r$ resp.) for all $a\in \CA$. The morphism $\tau$ is said to be \emph{proper} if it is left proper and right proper. A morphism $\tau$ is a \emph{hat morphism} if for any distinct $a,b\in\CA$, the symbols appearing in $\tau(a)$ and $\tau(b)$ are all distinct. For a given morphism $\tau$, we define its incidence matrix $A$ to be a non-negative integer matrix of size $|\CA|\times|\CB|$ such that for $a\in\CA$ and $b\in\CB$, the entry $A_{a,b}$ is the number of occurrences of $b$ in $\tau(a)$. Clearly, $\tau$ is primitive if and only if $A$ is a positive matrix.

 For a sequence $(\CA_n)_{n\ge 0}$ of finite alphabets, let $\BT=(\tau_n:\CA_{n+1}^*\to \CA_n^*)_{n\ge 0}$ be a directive sequence of morphisms. For $N\ge 1$ and $0\le n<N$, define $\tau_{[n,N)}=\tau_n\circ\dots\circ\tau_{N-1}$ and $\tau_{[n,N]}=\tau_n\circ\dots\circ\tau_{N}$. We say that $\BT$ is \emph{primitive} if for every $n\ge 0$, there exists $N>n$ such that $\tau_{[n,N)}$ is primitive. 

 \noindent For $n\ge 0$, we define the \emph{$\mathcal{S}$-adic subshift generated by $\BT$  at level $n$} as
 \[
 X_{\BT}^{(n)}:=\{x\in\CA_n^\Z\mid \forall\ k\ge 0, \ x_{[-k,k]} \prec \tau_{[n,N)}(a) \text{ for some } N>n \text{ and } a\in \CA_N\}. 
 \]
 We denote by $X_{\BT}:=X_{\BT}^{(0)}$, the \emph{$\mathcal{S}$-adic subshift generated by $\BT$}. If $\BT$ is primitive, then $X_{\BT}^{(n)}$ is non-empty and minimal for all $n\ge 0$. %For more details on $\mathcal{S}$-adic subshift, refer to~\cite{Durand22_book}.

 \subsubsection{Recognizability of morphisms}
 Let $\tau\colon \CA^*\to\CB^*$ be a morphism and $Y\subseteq \CA^\Z$ be given. For $y=(y_i)_{i\in\Z}\in Y$, define $\tau(y)=\dots\tau(y_{-1}).\tau(y_0)\tau(y_1)\dots\in \CB^\Z$ by concatenation of words. %Let $S$ denote the shift map as before.
 For $x\in\CB^\Z$, if there exist $k\in\Z$ and $y\in Y$ such that $x=S^k(\tau(y))$, then we say that $(k,y)$ is a \emph{$\tau$-representation of $x$ in $Y$}. If $0\le k< |\tau(y_0)|$, we say that $(k,y)$ is a \emph{centered $\tau$-representation of $x$ in $Y$}. The morphism $\tau$ is \emph{recognizable in $Y$} if every $x\in\CB^\Z$ has at most one centered $\tau$-representation in $Y$. 

 Let $\BT=(\tau_n\colon \CA_{n+1}^*\to\CA_n^*)_{n\ge 0}$ be a directive sequence of morphisms. We say that $\BT$ is \emph{recognizable} if $\tau_n$ is recognizable in $X_{\BT}^{(n+1)}$ for each $n\ge 0$. When $\BT$ is recognizable, then for each $n\ge 0$ and $x\in X_{\BT}$, there exists a unique pair $(k,y)$ where $y\in X_{\BT}^{(n)}$ and $0\le k<|\tau_{[0,n)}(y_0)|$ such that $x=S^k(\tau_{[0,n)}(y)).$ In this case we say that $y$ is the  \emph{$\tau_{[0,n)}$- factorization} of $x$.

 We have the following lemma (\cite[Lemma 3.6]{DDMP21}) that gives the local property of recognizability. 

 \begin{lemma}\label{lemma:local}
 Let $\BT=(\tau_n)_{n\ge 0}$ be a directive sequence of morphisms. If $X_{\BT}$ is recognizable, then for all $n\ge 0$, there exists a positive integer $R$ such that whenever $x,x'\in X_{\BT}$ are such that $x_{[-R,R]}=x'_{[-R,R]}$, then $y_0=y'_0$ where $(y_i)_{i\in\Z},(y'_i)_{i\in\Z}\in X_{\BT}^{(n)}$ are the $\tau_{[0,n)}$-factorizations of $x$ and $x'$ respectively.  
\end{lemma}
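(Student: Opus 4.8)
The plan is to argue by contradiction, converting the global uniqueness supplied by recognizability into a uniform local bound by means of the compactness of $X_{\BT}$ and $X_{\BT}^{(n)}$. Fix $n \ge 0$ and suppose the conclusion fails for this $n$. Then for every positive integer $R$ there exist $x^{(R)}, {x'}^{(R)} \in X_{\BT}$ with $x^{(R)}_{[-R,R]} = {x'}^{(R)}_{[-R,R]}$ whose $\tau_{[0,n)}$-factorizations $y^{(R)}, {y'}^{(R)} \in X_{\BT}^{(n)}$ nonetheless satisfy $y^{(R)}_0 \ne {y'}^{(R)}_0$. Writing these as $x^{(R)} = S^{k^{(R)}}(\tau_{[0,n)}(y^{(R)}))$ and ${x'}^{(R)} = S^{{k'}^{(R)}}(\tau_{[0,n)}({y'}^{(R)}))$ with $0 \le k^{(R)} < |\tau_{[0,n)}(y^{(R)}_0)|$ and $0 \le {k'}^{(R)} < |\tau_{[0,n)}({y'}^{(R)}_0)|$, the task is to produce a limiting configuration that contradicts the uniqueness of the factorization.

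First I would extract a well-behaved subsequence. Because $\CA_n$ is finite, the lengths $|\tau_{[0,n)}(a)|$ are bounded over $a \in \CA_n$, so the offsets $k^{(R)}, {k'}^{(R)}$ take only finitely many values and the central symbols $y^{(R)}_0, {y'}^{(R)}_0$ lie in a finite set; by the pigeonhole principle I may pass to a subsequence along which $k^{(R)} = k$, ${k'}^{(R)} = k'$, $y^{(R)}_0 = a$, and ${y'}^{(R)}_0 = b$ are constant, with $a \ne b$. Using compactness of $X_{\BT}$ and $X_{\BT}^{(n)}$, I refine once more so that $x^{(R)} \to x$, ${x'}^{(R)} \to x'$ and $y^{(R)} \to y$, ${y'}^{(R)} \to y'$; the limits lie in the respective closed subshifts. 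Since $x^{(R)}$ and ${x'}^{(R)}$ agree on windows $[-R,R]$ of growing size, the two limits coincide: $x = x'$.

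Next I would check that $(k,y)$ and $(k',y')$ are bona fide centered $\tau_{[0,n)}$-representations of the common point $x = x'$. The map $z \mapsto \tau_{[0,n)}(z)$ is continuous on $X_{\BT}^{(n)}$, since any finite window of $\tau_{[0,n)}(z)$ about the origin is determined by finitely many coordinates of $z$ near the origin; together with continuity of $S^{k}$, passing to the limit in $x^{(R)} = S^{k}(\tau_{[0,n)}(y^{(R)}))$ gives $x = S^{k}(\tau_{[0,n)}(y))$, and likewise $x' = S^{k'}(\tau_{[0,n)}(y'))$. The admissibility inequalities persist in the limit because $y_0 = a$ and $y'_0 = b$, so that $0 \le k < |\tau_{[0,n)}(y_0)|$ and $0 \le k' < |\tau_{[0,n)}(y'_0)|$. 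Hence both $(k,y)$ and $(k',y')$ are centered $\tau_{[0,n)}$-representations of $x$.

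Finally, recognizability of $\BT$ makes the $\tau_{[0,n)}$-factorization unique, forcing $(k,y) = (k',y')$ and in particular $y = y'$; but then $a = y_0 = y'_0 = b$, contradicting $a \ne b$. This contradiction yields the required $R$. The hard part will be the bookkeeping in the limiting argument: the successive passages to subsequences must stabilize the offsets and the central symbols so that the strict length inequalities $0 \le k < |\tau_{[0,n)}(y_0)|$ survive the limit, which is precisely what guarantees that the limiting pairs are admissible centered representations rather than arbitrary shifted decompositions.
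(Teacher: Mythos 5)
Your compactness argument is correct: the stabilization of the offsets and central symbols by pigeonhole, the continuity of the concatenative extension of $\tau_{[0,n)}$, and the persistence of the centering inequality $0\le k<|\tau_{[0,n)}(y_0)|$ in the limit are exactly the points that need care, and you handle each of them. Note that the paper itself gives no proof of this lemma (it is quoted from [DDMP21, Lemma 3.6]); your argument is the standard one by which this local form of recognizability is derived from the global uniqueness of centered representations.
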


 \subsubsection{Toeplitz subshifts}
 Let $\CA$ be an alphabet and $x=(x_i)_{i\in\Z}$ be an infinite sequence with symbols from $\CA$. We say that $x$ is \emph{Toeplitz} if for all $n\in \N$, there exists $p=p(n)\in\N$ such that $x_n=x_{n+kp}$ for all $k\in\Z$. A subshift $X\subseteq\CA^\Z$ is said to be \emph{Toeplitz} if it is the orbit closure under the shift map of a Toeplitz sequence. That is, $X=\overline{\{S^kx\mid k\in\Z\}}$ for some Toeplitz sequence $x$. A Toeplitz subshift is minimal. 

 \subsection{Bratteli-Vershik systems}
 In this section, we introduce Bratteli-Vershik systems, and we state a fundamental result by Herman, Putnam, and Skau, which gives the conjugacy of a minimal Cantor system with a Bratteli-Vershik system. For more details, we refer to~\cite{Durand22_book}.

 \subsubsection{Bratteli diagrams}
 A Bratteli diagram is a directed infinite graph $B=(V,E)$ with the following properties;
 \begin{itemize}
    \item Both $V$ and $E$ are partitioned into non-empty finite levels as $V=\bigcup_{n\ge 0}V_n$ and $E=\bigcup_{n\ge 1} E_n$.
    \item $V_0=\{v_0\}$ and all edges in $E_n$, for $n\ge 1$, are from $V_{n-1}$ to $V_n$.
 \end{itemize}
 For $n\ge 1$, define $s:E_n\to V_{n-1}$ and $r:E_n\to V_n$ to be the source and the range maps of edges, respectively. For $n\ge 0$, let $A_n$ denote the incidence matrix of $B$ at level $n$, that is,  $A_n$ is a non-negative integer matrix of size $|V_{n+1}|\times |V_n|$ such that $A_n(u,v)$ is the number of edges in $\{e\in E_{n+1}\mid s(e)=v \ \text{and } r(e)=u\}$. A \emph{finite path} in $B$ is given by a finite sequence of edges $(e_1,e_2,\dots,e_k)$ such that $e_i\in E_{n+i}$ for some $n\ge 0$, for all $1\le i\le k$ and $r(e_j)=s(e_{j+1})$ for all $1\le j<k$. For $k>n\ge 0$, let $E_{n,k}$ denote the collection of all paths from $V_n$ to $V_k$. Note that the number of paths from $v\in V_n$ to $u\in V_{k}$ is given by $(A_k\dots A_n)_{u,v}$. 

 For a strictly increasing subsequence $(n_k)_{k\ge 0}$ of integers with $n_0=0$, we define the \emph{telescoping of $(V,E)$ with respect to $(n_k)$} as the new Bratteli diagram $(V',E')$ where $V'_k=V_{n_k}$ and $E'_k=E_{n_{k-1}+1,n_k}$ with $s((e_1,\dots,e_\ell))=s(e_1)$ and $r((e_1,\dots,e_\ell))=r(e_\ell)$. A Bratteli diagram $(V,E)$ is said to be \emph{simple}, if there exists a telescoping $(V',E')$ of $(V,E)$ such that the corresponding incidence matrices $A'_n$ are positive matrices at each level $n\ge 0$.    

 \subsubsection{Ordered Bratteli diagrams and Vershik map}
 An ordered Bratteli diagram is a Bratteli diagram $(V,E)$ together with a partial order $\le$ on $E$ where $e,e'\in E$ are comparable if and only if $r(e)=r(e')$. 
 This induces a partial order on finite paths on $(V,E)$, called the reverse lexicographic ordering, which defines an order on any telescoping of $(V,E)$.

 We denote $X_B^{\max}$ ($X_B^{\min}$ resp.) to be the collection of $x=(e_i)_{i\ge 1}$ where $e_n$ is a maximal (minimal resp.) edge for all $n\ge 1$. 
 An ordered Bratteli diagram $B=(V,E,\le)$ is said to be \emph{properly ordered} if it is simple and $X_B^{\max}$ and $X_B^{\min}$ contain exactly one point.

 Given a properly ordered Bratteli diagram $B=(V,E,\le)$, we define a map, known as \emph{Vershik map} on 
 \[
 X_B=\{e_1e_2\dots\ \mid\ e_i\in E_{i} \text{
 and } r(e_i)=s(e_{i+1}) \text{ for } i\ge 1\}.
 \]
 We define the Vershik map $T_B:X_B\to X_B$ to be $T_B(x_{\max})=x_{\min}$ and for $x\ne x_{\max}$, the image of $x$ is obtained as follows. Let $x=(e_i)_{i\ge 1}$ and $k$ be the least integer such that $e_k$ is not a maximal edge. Let $e'_k$ be the successor of $e_k$ with respect to $\le$ where $r(e_k)=r(e'_k)$ and $(e'_1,\dots,e'_{k-1})$ be the unique minimal path from $V_0$ to $V_k$ such that $r(e'_{k-1})=s(e'_k)$. We define 
\[
T_B(x)=e'_1\dots e'_ke_{k+1}e_{k+2}\dots.
\]

 Note that $X_B$ is a closed subspace of $\prod_{i\ge 1}E_i$ and hence a compact metric space (recall that each $E_i$'s are finite). When $B=(V,E,\le)$ is simple, we have that $X_B$ is a Cantor space. Moreover, when $B$ is properly ordered, $(X_B,T_B)$ is a minimal invertible dynamical system and we call it to be the \emph{Bratteli-Vershik system}. A telescoping of Bratteli diagrams from $B=(V,E,\le)$ to $B'=(V',E',\le')$ induces a conjugacy from $(X_B,T_B)$ to $(X_{B'},T_{B'}).$ 
 Now we state the result of Herman, Putnam and Skau~\cite{HPS92} on the conjugacy between the minimal Cantor systems and Bratteli-Vershik systems. 

\begin{theorem}\label{thm:minimalCantor}
     Let $(X,T)$ be a minimal Cantor system. Then there exists a properly ordered Bratteli diagram $B=(V,E,\le)$ such that $(X_B,T_B)$ is topologically conjugate to $(X,T)$. 
\end{theorem}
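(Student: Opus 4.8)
The statement is the Herman--Putnam--Skau realization theorem, and the plan is to build the diagram explicitly from Kakutani--Rokhlin partitions. First I would fix a point $x_0\in X$ together with a decreasing sequence of clopen neighbourhoods $X=U_0\supseteq U_1\supseteq U_2\supseteq\cdots$ with $\bigcap_{n}U_n=\{x_0\}$, which exists because $X$ is a Cantor set. For each clopen $U_n$, minimality of $(X,T)$ forces the first-return time $r_n(x)=\min\{k\ge 1: T^k x\in U_n\}$ to be finite, and continuity together with compactness make it a bounded, locally constant function on $U_n$. Partitioning $U_n$ according to the finitely many return times and the landing columns produces a Kakutani--Rokhlin partition $\CP_n$ of $X$ into clopen ``towers'' $\{T^{j}C: 0\le j< h\}$, each built over a clopen base $C\subseteq U_n$ of height $h$. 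Passing to a subsequence of the $U_n$ if necessary, I would arrange that $\CP_{n+1}$ refines $\CP_n$, that the bases of $\CP_n$ are contained in $U_n$, and that $\bigcup_n \CP_n$ generates the topology of $X$.

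The diagram is then read off from this tower structure. Set $V_0=\{v_0\}$ and, for $n\ge 1$, let $V_n$ index the columns of $\CP_n$. For each $n\ge 0$ put one edge in $E_{n+1}$ from a vertex $u\in V_n$ to a vertex $w\in V_{n+1}$ for every occurrence of the column $u$ of $\CP_n$ as a sub-tower inside the column $w$ of $\CP_{n+1}$, read from bottom to top; this is well defined because $\CP_{n+1}$ refines $\CP_n$ and the bases are nested. Ordering these edges into each $w$ by the order in which the occurrences are met while ascending the column $w$ gives exactly the reverse-lexicographic order on finite paths. Simplicity of $B=(V,E,\le)$ follows, after a telescoping, from the fact that $\bigcup_n \CP_n$ generates the topology, since then the incidence products eventually become strictly positive; and the choice $\bigcap_n U_n=\{x_0\}$ forces $X_B^{\max}$ and $X_B^{\min}$ to be singletons, so $B$ is properly ordered.

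Finally, I would define $\phi\colon X\to X_B$ by sending each $x$ to the sequence of columns it occupies at the successive levels. Because the partitions generate the topology, $\phi$ is injective; because the towers are clopen it is continuous; and since $X_B$ is the full path space of $B$, it is surjective, hence a homeomorphism of compact spaces. The intertwining $\phi\circ T=T_B\circ\phi$ reflects the elementary observation that applying $T$ raises a point one rung in its current tower whenever possible --- which is precisely what incrementing the lowest non-maximal edge does in the Vershik map --- while the passage from the top of a tower back to a base corresponds to $T_B$ sending $x_{\max}$ to $x_{\min}$. I expect the delicate part to be the simultaneous bookkeeping that makes the three demands on $\{\CP_n\}$ (mutual refinement, nested bases, and generation of the topology) compatible with the properly ordered requirement; guaranteeing that both $X_B^{\max}$ and $X_B^{\min}$ reduce to a single point is where the choice $\bigcap_n U_n=\{x_0\}$ and a careful telescoping really carry the argument.
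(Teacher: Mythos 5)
The paper does not actually prove this statement: it is quoted directly from Herman, Putnam and Skau \cite{HPS92}, so there is no internal argument to compare yours against. Your sketch is the standard proof from that reference --- nested Kakutani--Rokhlin partitions built over a distinguished point, with the ordered Bratteli diagram read off from the occurrences of level-$n$ towers inside level-$(n+1)$ towers --- and it is correct in outline, including your identification of the genuinely delicate points (simultaneous refinement, nested bases, generation of the topology, and the singleton $X_B^{\max}$, $X_B^{\min}$ forced by $\bigcap_n U_n=\{x_0\}$). Two justifications are slightly off, though neither is fatal at this level of detail. First, simplicity of the diagram (positivity of the telescoped incidence products) does not follow from the partitions generating the topology --- that hypothesis is what gives injectivity of $\phi$; simplicity follows from minimality: every orbit enters the base of a fixed level-$n$ column within a uniformly bounded time, so once the level-$m$ towers are taller than that bound, every level-$m$ column contains every level-$n$ column. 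Second, surjectivity of $\phi$ is not automatic from ``$X_B$ is the full path space''; one should observe that each finite path corresponds to a nonempty clopen rung of a tower and take a nested intersection of compacta to realize every infinite path.
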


\subsection{Strong orbit equivalence} \label{sec:SOE}
Let $(X_1,T_1)$ and $(X_2,T_2)$ be two topological dynamical systems. We say that $(X_1,T_1)$ is \emph{orbit equivalent} to $(X_2,T_2)$ if there exists a homeomorphism $\phi:X_1\to X_2$ such that $\phi(\text{Orb}_{T_1}(x))=\text{Orb}_{T_2}(\phi(x))$ as sets for all $x\in X_1$. Clearly, $(X_1,T_1)$ and $(X_2,T_2)$ are orbit equivalent if they are conjugate where $\phi$ can be taken to be the conjugacy map. 
The orbit equivalence between $(X_1,T_1)$ and $(X_2,T_2)$ induces two maps $\alpha,\beta:X_1\to\Z$ such that for each $x\in X_1$, 
\[
\phi\circ T_1(x)=T_2^{\alpha(x)}\circ \phi(x) \text{ and } \phi\circ T_1^{\beta(x)}(x)=T_2\circ \phi(x).
\] We call $\alpha$ and $\beta$ to be the \emph{cocycle} maps.
Continuities of $\alpha$ or $\beta$ on all points imply that $(X_1,T_1)$ is either conjugate to $(X_2,T_2)$ or to its inverse (see, for instance, \cite{Boyle_Tomimaya_bounded_orb_eq:1998}). We say that $(X_1,T_1)$ and $(X_2,T_2)$ are \emph{strong orbit equivalent} if $\alpha$ and $\beta$ have at most one point of discontinuity.
%\textcolor{blue}{Add Dimension group}

Before stating the result that characterizes minimal Cantor systems that are strong orbit equivalent in terms of their associated Bratteli-Vershik representations, we give the following definition. Two Bratteli diagrams $(V_1,E_1)$ and $(V_2,E_2)$ have \emph{common intertwining} if there exists a Bratteli diagram $(V,E)$ such that $(V_1,E_1)$ is a telescoping of $(V,E)$ to odd levels (that is, to the subsequence $(2k+1)_{k\ge 0}$) and $(V_2,E_2)$ is a telescoping of $(V,E)$ to the even levels (that is, to the subsequence $(2k)_{k\ge 0}$). Now we state the following result from~\cite{GPS95}.

\begin{proposition}\label{prop:soe}
    Let $(X_1,T_1)$ and $(X_2,T_2)$ be two minimal Cantor systems with the associated ordered Bratteli diagrams $B_1=(V_1,E_1,\le_1)$ and $B_2=(V_2,E_2,\le_2)$. Then $(X_1,T_1)$ and $(X_2,T_2)$ are strong orbit equivalent if and only if $(V_1,E_1)$ and $(V_2,E_2)$ have a common intertwining. In particular, they are strong orbit equivalent if they have the same Bratteli diagram without ordering. 
\end{proposition}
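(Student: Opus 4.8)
The plan is to route the equivalence through the \emph{dimension group} of a minimal Cantor system, which serves as the common bridge between the dynamical notion of strong orbit equivalence and the purely combinatorial notion of common intertwining. For a minimal Cantor system $(X,T)$, define the ordered abelian group
\[
K^0(X,T) = C(X,\Z)\big/\{\, f - f\circ T : f\in C(X,\Z)\,\},
\]
equipped with the positive cone generated by the classes of non-negative integer-valued functions and with distinguished order unit $[\mathbf{1}]$, the class of the constant function $1$. I would first isolate two ingredients and then combine them. \textbf{(A) Dynamical side:} strong orbit equivalence of $(X_1,T_1)$ and $(X_2,T_2)$ is equivalent to the existence of an isomorphism of ordered groups \emph{with order unit} between $K^0(X_1,T_1)$ and $K^0(X_2,T_2)$. \textbf{(B) Combinatorial side:} for a properly ordered Bratteli diagram $B=(V,E,\le)$ with incidence matrices $(A_n)$, the group $K^0(X_B,T_B)$ is isomorphic, as an ordered group with order unit, to the inductive limit $\varinjlim(\Z^{|V_n|},A_n)$ with its natural order and the order unit coming from $V_0$; and two such inductive limits are order-unit isomorphic if and only if the underlying unordered diagrams admit a common intertwining.

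For step (B), the identification of $K^0$ with the inductive limit is a direct computation using the tower structure of the Bratteli--Vershik representation: level $n$ of the diagram produces a Kakutani--Rokhlin partition of $X_B$, the characteristic functions of its atoms generate $C(X_B,\Z)$ in the limit, and the coboundary relation $f\sim f\circ T_B$ realises exactly the connecting maps $A_n$. The assertion ``isomorphic inductive limits $\iff$ common intertwining'' is Bratteli's classical zig-zag argument: an order-unit isomorphism of two inductive limits can, after telescoping both diagrams, be approximated by a single commuting staircase of matrix identities, and the diagram $(V,E)$ supporting this staircase telescopes to $(V_1,E_1)$ on odd levels and to $(V_2,E_2)$ on even levels, which is precisely a common intertwining. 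The final sentence of the proposition (``same unordered diagram $\Rightarrow$ strong orbit equivalent'') is then immediate, since an identical diagram intertwines with itself trivially; alternatively, for this easy direction one may bypass $K^0$ entirely and build the orbit equivalence by hand, matching the two path spaces through their shared tail (cofinality) equivalence relation, which coincides with the Vershik-orbit relation and is insensitive to the ordering.

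The main obstacle is step (A), and within it the \emph{forward} implication: recovering a genuine order-unit isomorphism of $K^0$ from an orbit equivalence whose cocycles $\alpha,\beta$ may each be discontinuous at one point. The reverse implication of (A) is comparatively soft, since an order-unit isomorphism prescribes compatible Kakutani--Rokhlin partitions on both systems, from which one assembles the homeomorphism $\phi$ and checks that the single unavoidable discontinuity of the resulting cocycle is concentrated on the orbit of the distinguished maximal/minimal path. For the forward implication I would start from the relations $\phi\circ T_1 = T_2^{\alpha}\circ\phi$ and $\phi\circ T_1^{\beta} = T_2\circ\phi$ and show that pulling back along $\phi$ carries $T_2$-coboundaries to $T_1$-coboundaries, hence descends to the quotient. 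The at-most-one-discontinuity hypothesis is exactly what makes this work: away from the exceptional point the correction terms generated by the cocycle are themselves continuous coboundaries, and the lone discontinuity contributes nothing to the quotient class. Verifying this compatibility rigorously — the interplay between the orbit cocycle and the coboundary subgroup — is the analytic heart of the argument and is where the full Giordano--Putnam--Skau machinery is indispensable.
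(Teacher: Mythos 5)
The paper gives no proof of this proposition --- it is quoted from \cite{GPS95}, with the explicit remark that the original result is stated in terms of dimension groups --- and your proposal is precisely a sketch of that standard dimension-group argument (strong orbit equivalence $\Leftrightarrow$ order-unit isomorphism of $K^0$, identification of $K^0(X_B,T_B)$ with $\varinjlim(\Z^{|V_n|},A_n)$ via Kakutani--Rokhlin towers, and the Bratteli--Elliott intertwining), so it takes essentially the same route as the source the paper relies on. Note only that, as you yourself acknowledge, the analytic core (the forward implication of your step (A)) is deferred to the Giordano--Putnam--Skau machinery rather than carried out, so what you have is a correct outline of the cited proof rather than a self-contained one.
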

   In~\cite{GPS95}, the above result is stated in terms of the dimension groups of the associated minimal Cantor systems, the notion of which is entirely skipped from the purpose of this paper. Interested readers are referred to \cite{Durand22_book} and the references therein. 
\subsubsection{Morphisms read on an ordered Bratteli diagram}
Let $B=(V,E,\le)$ be an ordered Bratteli diagram. We define a directive sequence $\BT:=\BT_B=(\tau_n)_{n\ge 0}$ of morphisms as follows.
 
For $n\ge 0$ and $u\in V_{n+1}$, let $(e_1,\dots,e_k)$ be an ordered list of edges on $E_{n+1}$ with respect to the order $\le$ such that $\{e_1,\dots,e_k\}=r^{-1}(u)$. 
 %Also, let $(v_1,\dots,v_k)$ be an ordered list of vertices on $V_n$ such that $v_i=s(e_i)$ for $1\le i\le k$. 
For $n\ge 1$, we define the \emph{morphism read on $B$ at level $n$}, $\tau_n:V_{n+1}^*\to V_n^*$, as $\tau_n(u)=s(e_1)\dots s(e_k)$.   For $n=0$, we define $\tau_0:V_1^*\to E_1^*$ as $\tau_0(u)=e_1\dots e_k$.
 
Clearly, the incidence matrix $A_n$ of $B$ on level $n$ is the incidence matrix of the morphism $\tau_n$ for $n\ge 1$.
Moreover, for $u\in V_{n+1}$, the length of $\tau_n(u)$, denoted as $|\tau_n(u)|$, is given by the $u$-th row sum of $A_n$. Let $X_{\BT}$ denote the $\mathcal{S}$-adic subshift associated with $\BT$. We now state the following result which gives conditions that guarantee the conjugacy between $(X_{\BT},S)$ and $(X_B,T_B)$. It is a reformulation of~\cite[Proposition 4.5]{DDMP21}. 

\begin{proposition}\label{prop:conjugacy}
 % Let $B=(V,E,\le)$ be an ordered Bratteli diagram, and $\BT=(\tau_n)_{n\ge 0}$ be the directive sequence of morphisms read on $B$. Then $(X_{\BT},S)$ is conjugate to $(X_B,T_B)$ if the following holds;
 % \begin{enumerate}
 %     \item $X_{\BT}$ is minimal,
 %     \item $\tau_n$ is  proper for $n\ge 1$,
 %     \item $\tau_n$ extends by concatenation to an injective map from $X_{\BT}^{(n+1)}$ to $X_{\BT}^{(n)}$ for $n\ge 0$. 
 % \end{enumerate}
 
Let $B=(V,E,\le)$ be an ordered Bratteli diagram and $\BT=(\tau_n)_{n\ge 0}$ be the directive sequence of morphisms read on $B$. Suppose that $X_{\BT}$ is minimal, $\tau_n$ is  proper for $n\ge 1$ and each $\tau_n$ extends by concatenation to an injective map from $X_{\BT}^{(n+1)}$ to $X_{\BT}^{(n)}$ for $n\ge 0$. Then $(X_{\BT},S)$ is conjugate to $(X_B,T_B).$ 
 \end{proposition}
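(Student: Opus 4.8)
The plan is to realise the conjugacy as the coding of $(X_B,T_B)$ by its first-level edge and then to invert this coding using the recognizability supplied by the hypotheses. Define $\Phi\colon X_B\to E_1^{\Z}$ by setting $\Phi(x)_m=f_1$ whenever $T_B^m(x)=(f_i)_{i\ge 1}$; that is, $\Phi(x)_m$ records the first edge of the $m$-th iterate of $x$. The map $x\mapsto f_1$ is locally constant, being the clopen partition of $X_B$ according to the initial edge in $E_1$, and $T_B$ is a homeomorphism, so each coordinate of $\Phi$ is continuous and hence so is $\Phi$; moreover $\Phi\circ T_B=S\circ\Phi$ holds by construction. To see that $\Phi$ lands in $X_{\BT}$, fix $x$ off the $T_B$-orbit of the extreme paths and a level $n$: writing $u=r(e_n)\in V_n$ for the vertex traversed by $x$ and $j_n$ for the position of the prefix $(e_1,\dots,e_n)$ in the reverse lexicographic order among the $h_n(u):=|\tau_{[0,n)}(u)|$ paths into $u$, the iterates $T_B^{-j_n}(x),\dots,T_B^{\,h_n(u)-1-j_n}(x)$ sweep the tower over $u$ and their first edges spell exactly $\tau_{[0,n)}(u)$, so $\Phi(x)_{[-j_n,\,h_n(u)-1-j_n]}=\tau_{[0,n)}(u)$. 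For such $x$ one has $j_n\to\infty$ and $h_n(u)-j_n\to\infty$, so these windows exhaust $\Z$ and every central factor of $\Phi(x)$ lies in $\mathcal{L}(X_{\BT})$. As these $x$ are dense and $X_{\BT}$ is closed, continuity gives $\Phi(X_B)\subseteq X_{\BT}$; since $\Phi(X_B)$ is a nonempty closed $S$-invariant subset of the minimal system $X_{\BT}$, in fact $\Phi(X_B)=X_{\BT}$.

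The heart of the argument is injectivity of $\Phi$, which is equivalent to the recognizability of $\BT$. At level $0$ this is free: $\tau_0\colon V_1^{*}\to E_1^{*}$ is a hat morphism, since distinct vertices of $V_1$ receive disjoint sets of edges, so the vertex $r(e_1)$ is read off any single coordinate of $\Phi(x)$. For $n\ge 1$, properness of $\tau_n$ furnishes distinguished first and last letters that mark the cutting points between consecutive $\tau_{[0,n)}$-blocks, which forces the alignment of any centered $\tau_{[0,n)}$-representation, while the hypothesis that $\tau_n$ extends to an \emph{injective} map $X_{\BT}^{(n+1)}\to X_{\BT}^{(n)}$ forces the desubstituted point; inducting on $n$ yields recognizability of $\BT$.

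Granting recognizability, I would build the inverse $\Psi\colon X_{\BT}\to X_B$ directly. Each $z\in X_{\BT}$ has, for every $n$, a unique centered factorization $z=S^{k_n}\tau_{[0,n)}(y^{(n)})$ with $y^{(n)}\in X_{\BT}^{(n)}$ and $0\le k_n<|\tau_{[0,n)}(y^{(n)}_0)|$; set $u_n:=y^{(n)}_0\in V_n$ and recover the unique edge $e_n$ with $s(e_n)=u_{n-1}$, $r(e_n)=u_n$ from the compatibility of the factorizations at levels $n-1$ and $n$, so that $\Psi(z):=e_1e_2\cdots\in X_B$. By \cref{lemma:local} the vertex $u_n$ and the alignment $k_n$ depend only on finitely many coordinates of $z$, so each $e_n$ is a locally constant function of $z$ and $\Psi$ is continuous; it is equivariant because the factorization data transform correctly under $S$. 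Finally $\Phi$ and $\Psi$ are mutually inverse, since both encode the same datum $(u_n,j_n)_{n\ge 1}$ and such a datum determines a single path of $X_B$. A continuous equivariant bijection of compact spaces is a conjugacy, which is what we want.

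The main obstacle is the recognizability step, and with it the behaviour at the two extreme paths. The statement does not literally assume that $B$ is simple or properly ordered, so I would first record that minimality of $X_{\BT}$ forces $\BT$ to be primitive (after telescoping), hence $B$ to be simple, and that properness forces all minimal, respectively maximal, edges entering a given level to share one source, so that $X_B^{\min}$ and $X_B^{\max}$ are singletons and $(X_B,T_B)$ is a genuine Bratteli--Vershik system. The delicate point is then that on the single $T_B$-orbit of $x_{\max}$ (where $T_B(x_{\max})=x_{\min}$) the sweeping windows above fail to exhaust $\Z$; one must check that the Vershik ``carry'' at $x_{\max}$ is matched by the shift, which is exactly what properness guarantees through the continuity of $T_B$ at $x_{\max}$, so that $\Phi$ and $\Psi$ remain mutually inverse on this exceptional orbit.
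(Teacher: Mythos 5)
The paper does not actually prove this proposition: it is imported as a reformulation of \cite[Proposition 4.5]{DDMP21}, so there is no in-paper argument to compare against. Your architecture --- coding $(X_B,T_B)$ by the first-level edge of the iterates, verifying that the image is $X_{\BT}$ via minimality, and inverting the coding by desubstitution --- is exactly the standard route taken in that source, and your treatment of continuity, equivariance, surjectivity and the tower identity $\Phi(x)_{[-j_n,\,h_n(u)-1-j_n]}=\tau_{[0,n)}(u)$ is correct.

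There is, however, a genuine gap at the recognizability step, which you rightly call the heart of the argument but then dispatch with a claim that is false as stated. You assert that properness of $\tau_n$ ``furnishes distinguished first and last letters that mark the cutting points between consecutive blocks, which forces the alignment of any centered representation.'' If $\ell$ and $r$ denote the common first and last letters, nothing in the hypotheses prevents the word $r\ell$ from occurring \emph{inside} some image $\tau_n(a)$, in which case occurrences of $r\ell$ do not locate the cutting points and misaligned centered representations are not excluded by this observation. The authors are clearly aware of this: their Property $(P_k)$ contains the extra clause that $\tau_n(v_{i,n+1})$ must not contain $v_{m_n,n}v_{1,n}$ as a subword, precisely so that cutting points become identifiable with occurrences of that word --- a hypothesis the general proposition does not make. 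What your argument actually establishes is only the aligned case: if two centered representations share the same cutting-point set, then $\tau_n(y)=\tau_n(S^{e}y')$, injectivity of the extension gives $y=S^{e}y'$, and the centering condition forces $e=0$. The misaligned case is the nontrivial one; closing it requires a genuine combinatorial argument (essentially the recognizability theorems for morphisms on aperiodic subshifts), together with the observation --- also missing from your write-up --- that minimality and infiniteness of $X_{\BT}$ force every level $X_{\BT}^{(n)}$ to be aperiodic, since a periodic point at level $n$ would push forward to a periodic point at level $0$. As written, the induction ``yields recognizability'' only by assertion, so the central step of the proof is not actually supplied.
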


%Now we also discuss the recognizability of $X_{\BT}$ where $\BT$ is the directive sequence of morphisms read on an ordered Bratteli diagram $B$.  
% From~\cite[Proposition 4.6]{DDMP21} and Proposition~\ref{prop:conjugacy}, we can conclude that whenever $\BT$ is primitive and $\tau_n$ is proper for $n\ge 1$, then $X_{\BT}$ is recognizable. 

\section{Asymptotic components and automorphism  groups}\label{sec:asymclass}
In this section, we introduce the concept of asymptotic components and automorphism groups of topological dynamical systems. We describe the connection between these two notions. We also prove Theorem~\ref{thm:main3}.

\subsection{Asymptotic components and topological entropy}
Let $(X,T)$ be a topological dynamical system.
Let $\text{dist}$ denote the distance in $X$. 
Two points $x,y\in X$ are said to be \emph{left asymptotic} with respect to $T$ if $\lim_{n\to \infty}\text{dist}(T^{-n}x,T^{-n}y)=0$. Similarly, we can define the notion of right asymptotic, but we do not consider it in this paper. For simplicity, we say that two points are asymptotic if they are left asymptotic. 
We denote $x\sim y$ if $x$ and $y$ are asymptotic. 
When $X$ is a subshift, note that whenever $x\sim y$ for distinct $x,y\in X$, there exists $\ell\in \Z$ such that $x_{(-\infty,\ell)}=y_{(-\infty,\ell)}$ and $x_{\ell}\ne y_{\ell}$.
When $X$ is an infinite subshift, there are non-trivial asymptotic pairs, that is, there exist distinct $x,y\in X$ such that $x\sim y$. 

For $x,y\in X$ we say that $\text{Orb}_T(x)$ is asymptotic to $\text{Orb}_T(y)$, denoted as $x\sim_{\text{Orb}}y$, if there exist $x'\in\text{Orb}_T(x)$ and $y'\in\text{Orb}_T(y)$ that are asymptotic. This defines an equivalence relation on the collection of orbits of points on $X$ under $T$. When an equivalence class is not reduced to a single element, we call it an \emph{asymptotic component}. %\textcolor{red}{(Remove) We now state and prove the following lemma about the asymptotic pairs on the Bernoulli shift. 
%\begin{lemma}\label{lemma:bernoulli}
%    Let $(Y,S,\nu)$ be a Bernoulli shift. Then, the set $B_x:=\{y\in Y\mid y\sim x\}$ has measure zero for any $x\in Y$.
%\end{lemma}
%\begin{proof}
 %     When $\lim_{n\to\infty}\text{dist}(S^{-n}(x),S^{-n}(y))=0$, there exists $i\in\Z$ such that $x_{(-\infty,i]}=y_{(-\infty,i]}$. We partition $B_x$ as $B_x=\bigcup_{i\in\Z} B_x^i$ where $B_x^i=\{y\in Y\mid y_{(-\infty,i]}=x_{(-\infty,i]}\}.$ If $x=(x_j)_{j\in\Z},$ then $B_x^i\subseteq S^{-i+n}(C_{x_{i-n}\cdots x_{i},0})$ for all $n\ge 0$, where for a word $w$, $C_{w,0}$ denotes the cylinder based at $w$ and positioned at 0. Clearly, $\nu(B_x^i)=0$ as $\lim_{n\to \infty}\nu(C_{x_{i-n}\cdots x_{i},0})=0$.
%\end{proof}}

\noindent We now have all the necessary materials to prove Theorem~\ref{thm:main3}.

\begin{proof}[Proof of Theorem~\ref{thm:main3}]
Assume that the entropy of $(X,T)$ is positive and let $\mu$ be an ergodic Borel probability measure of $X$ such that $h(\mu)>0$. Let $X$ have at most countably many asymptotic components. Choose $x_1,x_2,\dots\in X$ to be the representatives of points on distinct asymptotic components. Let $A_i'=\{y\in X \mid y\sim x_i\}$ and $A_{i}=\{y\in X\mid y\sim_{\text{Orb}} x_i \}$ be the asymptotic and orbit asymptotic sets associated with $x_i$ for $i\in\N$. Note that $A_i=\bigcup_{n\in \Z} T^n A_i'$. 
Also,
\[
A:=\{x\in X\mid x\sim y,\ \text{for some } y\ne x\}=\bigcup_{i\in \Z} A_i\cup P,
\] where $P=\{x\in A \mid y\in\text{Orb}_T(x) \text{ whenever } x\sim y\}$. 
The elements in $P$ consist of all elements in $A$ whose orbit asymptotic class contains a single element and is not considered part of the asymptotic component.
By~\cite[Proposition 1 and Proposition 3]{BHR02}, $\mu(A)=1$. 

First, we prove that $P$ has measure zero. If $P_n=\{x\in P\ \mid\ x\sim T^nx \}$, then $P=\bigcup_{n\in\Z}P_n$. Since $P_n$ is an invariant set and $\mu$ is an ergodic measure, either $\mu(P_n)=0$ for all $n\in\Z$ or there exists $n\in\Z$ where $\mu(P_n)=1$. In the former case, we have $\mu(P)=0$ and in the later case we let $R=\{x\in X\mid \lim_{i\to\infty}T^{n_i}x=x \text{ for some subsequence } (n_i)_{i\ge 1}\}$. By Poincar\'e recurrence theorem, $\mu(R)=1$ and hence $\mu(P_n\cap R)=1$. This implies that $x=\lim_{i\to\infty}T^{n_i}x=\lim_{i\to\infty}T^{n_i+n}x=T^nx$ for almost every point in $X$. This contradicts that $h(\mu)>0$.

Hence we have $\mu\left(\bigcup_{i,n\in\Z}T^nA'_i\right)=1$, and
there exists $i\in \N$ such that  $\mu(A_{i}')>0$. Again by Poincar\'e recurrence theorem, there exists $n\geq 1$ such that $\mu(A_i'\cap T^{-n}A_i')>0$. But this is not possible as $A_i'\cap T^{-n}A_i'\subseteq P_n$.
%Set $B=\{x\in X: x \sim T^n x\}$. Then $\mu(B)>0$ and since $B$ is $T$-invariant, we obtain $\mu(B)=1$. Let $R$ be the set of recurrent points. Then $\mu(B\cap R)=1$. For $x\in B\cap R$, we have that there exists a sequence $(n_i)$ with $T^{n_ix}\to x$ and as $x\sim T^nx$, $T^{n_i+n}x\to x$ as well. Hence $T^nx=x$ for all $x\in B$. This contradicts that $h(\mu)=0$. 
\end{proof}

Using the above theorem together with the results from~\cite{Sugisaki07}, we state one part of our main result, Theorem~\ref{thm:main1}. 
\begin{corollary}\label{cor:uncount}
     Let $(X,T)$ be a minimal Cantor system. Then there exists a minimal subshift $(X',S)$ that is strong orbit equivalent to $(X,T)$ and having uncountably many asymptotic components. 
\end{corollary}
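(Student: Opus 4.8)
The plan is to combine \cref{thm:main3} with a known density result on entropy within strong orbit equivalence classes. The strategy rests on the contrapositive of \cref{thm:main3}: since a system with at most countably many asymptotic components must have zero topological entropy, any system with \emph{positive} entropy is forced to have uncountably many asymptotic components. Thus it suffices to produce, inside the strong orbit equivalence class of $(X,T)$, a minimal subshift with strictly positive topological entropy.

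First I would invoke the results of Sugisaki~\cite{Sugisaki07}, cited in the text, which guarantee that within the strong orbit equivalence class of any minimal Cantor system one can realize systems of arbitrary prescribed entropy; in particular, one can realize a minimal subshift $(X',S)$ that is strong orbit equivalent to $(X,T)$ and satisfies $h(X')>0$. (One should check that these realizations are genuinely subshifts, not merely abstract minimal Cantor systems — this is exactly the form in which Sugisaki's entropy-realization theorem is stated, so no extra work is needed here.) Having secured such an $(X',S)$, I would apply \cref{thm:main3} in contrapositive form: if $(X',S)$ had at most countably many asymptotic components, its entropy would vanish, contradicting $h(X')>0$. Hence $(X',S)$ has more than countably many asymptotic components, i.e.\ uncountably many. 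Strong orbit equivalence is transitive, so $(X',S)$ is strong orbit equivalent to $(X,T)$, completing the argument.

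The main obstacle I anticipate is purely bibliographic rather than conceptual: one must confirm that Sugisaki's construction stays inside the prescribed strong orbit equivalence class (not merely producing \emph{some} positive-entropy minimal Cantor system) and that the realizing object is a minimal \emph{subshift}. Given the way the introduction cites \cite{BH94,Ormes97,Sugisaki03,Sugisaki07} — asserting that ``within every strong orbit equivalence class, there exist systems with any non-negative numbers as their entropies'' — this closure property is precisely what those references supply, so the deduction is immediate. The only genuinely nontrivial ingredient, \cref{thm:main3}, is already available, and the cardinality dichotomy (``not at most countable'' equals ``uncountable'') requires no further justification for the set of asymptotic components.
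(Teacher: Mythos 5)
Your argument is correct and is essentially identical to the paper's own proof: both invoke Sugisaki's realization theorem (\cite[Theorem 1.1]{Sugisaki07}) to obtain a positive-entropy minimal subshift in the given strong orbit equivalence class, and then apply \cref{thm:main3} in contrapositive form to conclude that it must have uncountably many asymptotic components.
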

\begin{proof}
   Let $(X,T)$ be a minimal Cantor system. By~\cite[Theorem 1.1] {Sugisaki07}, there exists a minimal subshift $(X',S)$ with positive entropy that is strong orbit equivalent to $(X,T)$. This subshift $(X',S)$ must have uncountably many asymptotic components by Theorem~\ref{thm:main3}. 
\end{proof}

\subsection{Automorphism groups of a dynamical system}
 An automorphism on $(X,T)$ is a homeomorphism $\phi\colon X\to X$ such that $\phi\circ T=T\circ\phi$. Let $\text{Aut}(X,T)$ denote the group of automorphisms of $(X,T)$ under composition. Clearly, $T^n\in\text{Aut}(X,T)$ for all $n\in\Z$. We say that $\text{Aut}(X,T)$ is \emph{trivial} if it is generated by $T$.

Let  $\phi\in \text{Aut}(X,T)$. Whenever two points $x,y\in X$ are asymptotic with respect to $T$, it is easy to see that $\phi(x)$ and $\phi(y)$ are asymptotic with respect to $T$. Similarly, the orbits $\text{Orb}_T(\phi(x))$ is asymptotic to $\text{Orb}_T(\phi(y))$ whenever $\text{Orb}_T(x)$ is asymptotic to $\text{Orb}_T(y)$. Hence, $\phi\in\text{Aut}(X,T)$ defines a permutation on the set of asymptotic components.  
Using \cite[Corollary 3.3]{DDMP16}, we can easily state the following result which provides us the connection between the number of asymptotic components and automorphism group of a minimal subshift.

\begin{proposition}\label{prop:aut}
    If $(X,S)$ is a minimal  subshift with finitely many asymptotic components, then $\text{Aut}(X,S)$ virtually $\Z$. Moreover, when  $(X,S)$ has exactly one asymptotic component, then $\text{Aut}(X,S)$ is isomorphic to $\Z$.
\end{proposition}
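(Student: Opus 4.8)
The plan is to leverage the stated result \cite[Corollary 3.3]{DDMP16}, which (as the surrounding discussion indicates) asserts that for a minimal subshift, every automorphism is determined by how it permutes the asymptotic components together with the finitely many possible ``shifts'' within a component. Concretely, I expect \cite[Corollary 3.3]{DDMP16} to give that the kernel of the action of $\text{Aut}(X,S)$ on the set of asymptotic components consists precisely of the powers of $S$, and moreover that this action has image a finite group whenever there are finitely many components. Granting this, the structural statement follows from elementary group theory.

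For the first assertion, I would proceed as follows. By the discussion preceding the proposition, each $\phi \in \text{Aut}(X,S)$ induces a permutation $\pi_\phi$ of the (finite) set $\CC = \{c_1,\dots,c_k\}$ of asymptotic components, and $\phi \mapsto \pi_\phi$ is a group homomorphism into the symmetric group $\text{Sym}(\CC) \cong \text{Sym}(k)$. Let $K = \ker(\phi \mapsto \pi_\phi)$ be the subgroup of automorphisms fixing every asymptotic component setwise. Since $\text{Sym}(k)$ is finite, $K$ has finite index in $\text{Aut}(X,S)$. The content of \cite[Corollary 3.3]{DDMP16} should be exactly that $K$ is generated by $S$, i.e. $K = \langle S \rangle \cong \Z$ (this is where the minimality and the asymptotic structure of the subshift enter: an automorphism fixing each asymptotic component must act on each asymptotic pair by a bounded shift, and minimality forces this shift to be a single global power of $S$). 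Hence $\text{Aut}(X,S)$ contains the normal subgroup $\langle S\rangle \cong \Z$ of finite index, which is the definition of being virtually $\Z$.

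For the second (``moreover'') assertion, suppose $(X,S)$ has exactly one asymptotic component, so $k = 1$ and $\text{Sym}(\CC)$ is trivial. Then the homomorphism $\phi \mapsto \pi_\phi$ is trivial, so $\text{Aut}(X,S) = K = \langle S \rangle$. Finally, since $(X,S)$ is an infinite minimal subshift it is aperiodic, so $S$ has infinite order and $\langle S\rangle \cong \Z$; thus $\text{Aut}(X,S) \cong \Z$.

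I expect the only genuine subtlety to be pinning down the precise statement of \cite[Corollary 3.3]{DDMP16} and verifying that the hypotheses needed to apply it are met here; in particular one must confirm that ``finitely many asymptotic components'' is the correct hypothesis under which the cited corollary identifies the kernel $K$ with $\langle S\rangle$, rather than merely with a group containing it. Everything after that identification is routine: the passage from ``kernel of finite-index action equals $\langle S\rangle$'' to ``virtually $\Z$'', and the specialization to the single-component case, are both immediate. The remaining verifications (that the induced map is a homomorphism, that aperiodicity gives $\langle S\rangle \cong \Z$) were already sketched in the paragraph preceding the proposition and require no new ideas.
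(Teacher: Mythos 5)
Your proposal follows essentially the same route as the paper, which offers no argument beyond invoking Corollary~3.3 of [DDMP16]: the point that $\text{Aut}(X,S)$ acts on the finite set of asymptotic components with kernel exactly $\langle S\rangle$ and finite image is precisely the mechanism the authors themselves describe in the introduction (``only the powers of $T$ fix these permutations''), and the group-theoretic bookkeeping you add is the intended, routine part. The one caveat you flag---verifying that the cited corollary identifies the kernel with $\langle S\rangle$ rather than a larger group (e.g.\ ruling out an automorphism that fixes a component setwise while permuting the orbits inside it)---is exactly the content the paper delegates to the reference, so your reconstruction matches the paper's proof.
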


%\begin{theorem}
 %   There exists a subshift $(X,S)$ of positive entropy such that if $\mu$ is an invariant measure of positive entropy, then there exists a subset $A\subseteq X$ of full measure such that any two points in $A$ are asymptotic to each other. 
%\end{theorem}

\section{Subshifts with finitely many asymptotic components}\label{Sec:finite}
Fix $k\ge 1$. For a given minimal Cantor system $(X,T)$, we construct a subshift that is strong orbit equivalent to $(X,T)$ and has $k$ many asymptotic components. As a corollary, we also construct a subshift within the same strong orbit equivalence class having trivial automorphism group. 

%Let $\BT=(\tau_n)_{n\ge 0}$ be a directive sequence of morphisms such that $X_{\BT}$ is recognizable. Then for $x\in X_{\BT}$, let $x^{(n)}\in X_{\BT}^{(n)}$ be such that $x=S^\ell(\tau_{[0,n)}(x^{(n)}))$ for some $0\le\ell<|\tau_{[0,n)}(x_0^{(n)})|$. For each $n\ge 0$, $x^{(n)}$ is well-defined as there exists such a unique pair $(x^{(n)},\ell)$. We have the following result about the asymptotic set of $x^{(n)}$. 

%\begin{lemma}
 %   If $\text{Orb}(x)\sim\text{Orb}(y)$ for $x,y\in X_{\BT}$, then $\text{Orb}(x^{(n)})\sim\text{Orb}(y^{(n)})$ for $n\ge 0$.
%\end{lemma}
%\begin{proof}
 %   First of all, note that if $x,x'\in X_{\BT}$ are in the same orbit, clearly, $x^{(n)},x'^{(n)}$ are in the same orbit.    Let $\text{Orb}(x)\sim\text{Orb}(y)$ for $x,y\in X_{\BT}$. Hence, without loss of generality, assume that $x_{(-\infty,0)}=x_{(-\infty,0)}$ and $x_0\ne y_0$. By the local property of recognizability, there exists $t\in \Z$ such that $x^{(n)}_{(\infty,t)}=y^{(n)}_{(\infty,t)}$. \textcolor{red}{this proof can be skipped}  
%\end{proof}

%Let $(V,E)$ be the simple Bratteli diagram and $A_n$ be the incidence matrix at level $n$. Telescoping if necessary, we assume that $(A_n)_{i,j}>0$ and $(A_n)_{i,1}>|V_{n+1}|$ for all $n\ge 1, 1\le i\le m_{n+1}$ and $1\le j\le m_n$ where $|V_k|=m_k$. Additionally, we also assume that $|V_i|>k$ for all $i\ge 1$.

%with respect to which the associated Bratteli-Vershik system is conjugate to $(X,T)$. 
\subsection{A directive sequence of morphisms} We state the following properties for a directive sequence of morphisms to be satisfied so that the $\mathcal{S}$-adic subshift generated by it will have exactly $k$ many asymptotic components.

\begin{definition}\label{def:finite-asym}
%Assume that $(A_n)_{i,j}>0$ and $(A_n)_{i,1}>3$ for all $n\ge 1, 1\le i\le m_{n+1}$ and $1\le j\le m_n$ where $|V_k|=m_k$. %This assumption can be made since $(V,E)$ is simple and we are allowed to telescope the diagram if necessary. 
    %Let $V_n$ be an alphabet of size $m_n$ (we can think of $V_n$ and $V_{n+1}$ as indexing sets of columns and rows of $A_n$, respectively). 
 Let  $\BT=(\tau_n:V^*_{n+1}\to V^*_n)_{n\ge 0}$ be a directive sequence of morphisms. We say that $\BT$    
    satisfies \emph{Property ($P_k$)} if the following conditions are true; $\tau_0$ is a hat morphism, for $n\ge 1$ and $V_n=\{v_{1,n},\dots,v_{m_n,n}\}$, we have
    \begin{enumerate}
    \item $\tau_n$ is primitive,
   %     \item $\tau_0$ is a hat morphism,
        \item for $1\le i\le k+1$,  $\tau_n(v_{i,n+1})$ has $v_{1,n}^2v_{2,n}^2\dots v_{i-1,n}^2v_{i,n}\dots v_{k,n}v_{k+1,n}$ as a prefix, 
        \item for $ i> k+1$, $\tau_n(v_{i,n+1})$ has $v_{1,n}^iv_{2,n}$ as a prefix, and 
       \item for $1\le i\le m_{n+1}$, $\tau_n(v_{i,n+1})$ has $v_{m_n,n}$ as a suffix and $\tau_n(v_{i,n+1})$ does not have $v_{m_n,n}v_{1,n}$ as a subword.
    \end{enumerate}
  
\end{definition}
Note that, in order to define Property ($P_k$), the ordering of letters in $V_n$'s are important (especially the first $k+1$ many and the last letters). Hence, whenever we talk about Property ($P_k$), we always mention that $V_n=\{v_{1,n},\dots,v_{m_n,n}\}$ with $v_{1,n},\dots,v_{k+1,n},v_{m_n,n}$ being the distinguished letters as in the definition. We state some properties of morphisms that satisfy ($P_k$) for later use, which can be easily verified. 
\begin{remark}
    Consider a directive sequence of morphisms $\BT=(\tau_n:V^*_{n+1}\to V^*_n)_{n\ge 0}$ where $V_n=\{v_{1,n},\dots,v_{m_n,n}\}$ and with the incidence matrices given by $(A_n)_{n\ge 0}$. Denote $(A_n)_{i,j}$ as the $(v_{i,n+1},v_{j,n})$-th entry of $A_n$.
    In order for $\BT$ to satisfy Property ($P_k$), the necessary requirements are, for $n\ge 1,$
\begin{enumerate}
    \item[(i)] $(A_n)_{i,j}>0$ for all $1\le j\le m_n$ and $1\le i\le m_{n+1}$,
    \item[(ii)] $m_n>k$,
    \item[(iii)] $(A_n)_{i,j}\ge 2$  for $1\le j \le i-1$ and $1\le i\le k+1,$
    \item[(iv)] $(A_n)_{i,1}\ge i$  for $i>k+1$.
\end{enumerate}
\end{remark}
\begin{remark}\label{rem:prefix}
 %Consider a directive sequence of primitive morphisms $\BT=(\tau_n:V^*_{n+1}\to V^*_n)_{n\ge 0}$ that satisfies Property ($P_k$), where $V_n=\{v_{1,n},\dots,v_{m_n,n}\}$. For $j>k$, observe that there does not exist $w\in V_n^*$ and distinct elements $u,u'\in V_{n+1},$ such that both $\tau_n(u)$ and $\tau_n(u')$ share the same prefix $wv_{j,n}$. 
 Consider a directive sequence of morphisms $\BT=(\tau_n:V^*_{n+1}\to V^*_n)_{n\ge 0}$ that satisfies Property ($P_k$), where $V_n=\{v_{1,n},\dots,v_{m_n,n}\}$. Clearly, $\tau_n$ is injective on symbols, that is, for $u, u'\in V_{n+1}$ such that $u\ne u'$, we have $\tau_n(u)\ne \tau_n(u')$. 
In fact, more can be said than just that it is injective on symbols - no two distinct images can have the same prefixes of certain type.   
 That is, there do not exist distinct elements $u,u'\in V_{n+1},$ such that both $\tau_n(u)$ and $\tau_n(u')$ share the same prefix $v_{1,n}^2\dots v_{i-1,n}^2v_{i,n}\dots v_{k+1,n}$ for some $1\le i\le k+1$. Similarly, there do not exist distinct elements $u,u'\in V_{n+1},$ such that both $\tau_n(u)$ and $\tau_n(u')$ share the same prefix $v_{1,n}^iv_{2,n}$ for some $i>k+1$. Hence, for $j>k$, there do not exist distinct elements $u,u'\in V_{n+1},$ such that both $\tau_n(u)$ and $\tau_n(u')$ share the same prefix starting with $v_{1,n}$ and ending with $v_{j,n}$.
\end{remark}
%\[
%\tau_n(u_i)=
%\begin{cases}
%v_1^2v_2^2\dots v_{i-1}^2v_{i}\dots v_kv_{k+1}\dots  v_{m_n}^{\ell_{i,m_n}}, & 1\le i\le k+1\\
 %   v_1^{i}v_2\dots v_{m_n}^{\ell_{i,m_n}}, & i>k+1
%\end{cases}
%\]
%For $n=0,$ it is a hat morphism $\tau_0:V_1^*\to E_1^*$ given by $\tau_0(v_{i,1})=e_{i,1}\dots e_{i,k_i}$ where $e_{i,j}$ is an edge from $v_{1,0}$ to $v_{i,1}$, $k_i=(A_0)_{u_i,1}$. 

%\textcolor{blue}{this is not needed here:} For a given simple Bratteli diagram $(V,E)$ with $(A_n)_{i,j}>0, (A_n)_{i,1}>|V_{n+1}|$ and $|V_n|>k$, it is possible to give a partial ordering $\ge'$ on $E$ such that $(V,E,\ge')$ is an ordered Bratteli diagram and the directive sequence of morphisms read on $(V,E,\ge')$ is primitive and satisfies Property ($P_k$). Clearly, $\tau_n$ is injective on symbols. 

%Given a Bratteli diagram $(V,E)$ and its corresponding directive sequence of morphisms $\BT$ as in Definition~\ref{def:finite-asym}, we can give a partial ordering $\ge'$ on $E$ such that $(V,E,\ge')$ is an ordered Bratteli diagram and $\BT$ is the directive sequence of morphisms read on $(V,E,\ge')$. 

\subsection{Auxiliary lemmas}
We have the following series of lemmas on the properties of the $\mathcal{S}$-adic subshift $(X_{\BT},S)$ where $\BT$ satisfies Property ($P_k$). 
 
\noindent We say that two words $v,u$ are \emph{prefix dependent} if either $u$ is a prefix of $v$ or $v$ is a prefix of $u$. 
\begin{lemma}\label{lemma:prefix_indep}
    Let $\BT=(\tau_n:V^*_{n+1}\to V^*_n)_{n\ge 0}$ be a directive sequence of morphisms that satisfy Property ($P_k$) with $V_{n}=\{v_{1,n},\dots,v_{m_n,n}\}$  for $n\ge 1$. Then for $n\ge 1$ and $1\le i\le k$, the words $\tau_{[0,n)}(v_{i,n})$ and $\tau_{[0,n)}(v_{i+1,n})$ are not prefix dependent.
\end{lemma}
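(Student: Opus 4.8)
The plan is to argue by induction on $n$, exploiting the recursion $\tau_{[0,n+1)}=\tau_{[0,n)}\circ\tau_n$ together with the explicit prefix data recorded in condition (2) of Property $(P_k)$. For the base case $n=1$ we have $\tau_{[0,1)}=\tau_0$, and since $\tau_0$ is a hat morphism, for the distinct letters $v_{i,1}\neq v_{i+1,1}$ the images $\tau_0(v_{i,1})$ and $\tau_0(v_{i+1,1})$ involve pairwise disjoint symbols. As these images are nonempty (the morphisms are nonerasing, which follows from primitivity and the underlying Bratteli structure), their initial letters already differ, so neither word is a prefix of the other; that is, they are not prefix dependent.

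For the inductive step, assume the statement at level $n$: for every $1\le j\le k$ the words $\tau_{[0,n)}(v_{j,n})$ and $\tau_{[0,n)}(v_{j+1,n})$ are not prefix dependent. Fix $1\le i\le k$. Since $i,i+1\le k+1$, condition (2) applies to both letters, so $\tau_n(v_{i,n+1})$ has prefix $P_i=v_{1,n}^2\cdots v_{i-1,n}^2\,v_{i,n}\,v_{i+1,n}\cdots v_{k+1,n}$ and $\tau_n(v_{i+1,n+1})$ has prefix $P_{i+1}=v_{1,n}^2\cdots v_{i-1,n}^2\,v_{i,n}^2\,v_{i+1,n}\cdots v_{k+1,n}$. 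Writing $a=\tau_{[0,n)}(v_{i,n})$, $b=\tau_{[0,n)}(v_{i+1,n})$ and $C=\tau_{[0,n)}(v_{1,n}^2\cdots v_{i-1,n}^2\,v_{i,n})$, and using that $\tau_{[0,n)}$ is a morphism (hence distributes over concatenation and preserves the prefix relation), I find that $\tau_{[0,n+1)}(v_{i,n+1})$ begins with $C\,b\,(\cdots)$ whereas $\tau_{[0,n+1)}(v_{i+1,n+1})$ begins with $C\,a\,(\cdots)$. The block $C$ is therefore a common prefix of both full images, after which they present $b$ against $a$.

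Suppose, toward a contradiction, that $\tau_{[0,n+1)}(v_{i,n+1})$ and $\tau_{[0,n+1)}(v_{i+1,n+1})$ were prefix dependent. Cancelling the common prefix $C$, the tails $b\,R_1$ and $a\,R_2$ (for suitable words $R_1,R_2$) would still be prefix dependent. An elementary word lemma then forces $a$ and $b$ to be prefix dependent: comparing the first $\min(|a|,|b|)$ symbols of $b\,R_1$ and $a\,R_2$ shows that the shorter of $a,b$ is a prefix of the longer. This contradicts the induction hypothesis applied with $j=i$, and so $\tau_{[0,n+1)}(v_{i,n+1})$ and $\tau_{[0,n+1)}(v_{i+1,n+1})$ are not prefix dependent, closing the induction.

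The heart of the argument is the bookkeeping of prefixes: condition (2) is tailored so that, for consecutive indices $i$ and $i+1$, the prefixes $P_i$ and $P_{i+1}$ first coincide on $v_{1,n}^2\cdots v_{i-1,n}^2\,v_{i,n}$ and then diverge by opposing $v_{i+1,n}$ to $v_{i,n}$ — precisely the pair controlled by the induction hypothesis. The only genuinely delicate point is the cancellation step: one must verify that prefix dependence of the full images descends, after deleting the common block $C$, to prefix dependence of the single blocks $a$ and $b$. This is exactly the short piece of word combinatorics isolated above, and it is also the place where nonemptiness of the images $a$ and $b$ is needed.
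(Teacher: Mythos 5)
Your proof is correct and follows essentially the same route as the paper: induction on $n$, using condition (2) of Property $(P_k)$ to write both images as a common block $C=\tau_{[0,n)}(v_{1,n}^2\cdots v_{i-1,n}^2v_{i,n})$ followed by $\tau_{[0,n)}(v_{i+1,n})$ and $\tau_{[0,n)}(v_{i,n})$ respectively, then cancelling $C$ to reduce prefix dependence to the previous level. You merely spell out the cancellation/word-combinatorics step that the paper leaves implicit.
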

\begin{proof}
We prove this by induction on $n$. Clearly, the result holds for $n=1$ as $\tau_0$ is a hat morphism.
Assume that the results hold true for $n-1$.   
    For $1\le i\le k$, we have, 
    \begin{eqnarray*}
          \tau_{[0,n)}(v_{i,n})&=&w_{i,n}\tau_{[0,n-1)}(v_{i+1,n-1})u_{i,n} \text{ and }
          \\ 
           \tau_{[0,n)}(v_{i+1,n})&=&w_{i,n}\tau_{[0,n-1)}(v_{i,n-1})u'_{i,n}
    \end{eqnarray*} where $w_{i,n}=\tau_{[0,n-1)}(v_{1,n-1}^2\dots v_{i-1,n-1}^2v_{i,n-1})$. Clearly, if $\tau_{[0,n)}(v_{i,n})$ and $\tau_{[0,n)}(v_{i+1,n})$ are prefix dependent, then $\tau_{[0,n-1)}(v_{i,n-1})$ and $\tau_{[0,n-1)}(v_{i+1,n-1})$ are prefix dependent which is a contradiction. 
\end{proof}

% \begin{definition}
%     Let $X_{\BT}$ be recognizable. For $x\in X_{\BT}^{(n+1)}$, we define the cutting point of $\tau_n(x)$  to be the subset of integers defined (refer to Figure~\ref{fig:cutting}) as,
%     \[
%     C_{\tau_n}(x):=\{-|\tau_n(x_{[-\ell,0)})|: \ell>0\}\cup\{0\}\cup \{|\tau_n(x_{[0,\ell)})|: \ell>0\}.
%     \]
% Note that when $\BT$ satisfies Property ($P_k$), the cutting points are precisely where $v_{m_n,n}$ is followed by $v_{1,n}$. 

\begin{definition}
    Let $\tau:\CA\to\CB$ be a morphism. For $x\in \CA^{\Z}$, we define the \emph{cutting point of $\tau(x)\in\CB^\Z$ by $\tau$} to be the subset of integers defined (refer to Figure~\ref{fig:cutting}) as,
    \[
    C_{\tau}(x):=\{-|\tau(x_{[-\ell,0)})|: \ell>0\}\cup\{0\}\cup \{|\tau(x_{[0,\ell)})|: \ell>0\}.
    \]

\begin{figure}[ht]
\begin{center}
\begin{tikzpicture}
\beginpgfgraphicnamed{situation-b}
\begin{scope}[very thick]
\draw[loosely dotted] (-2.5,0) -- (-1.5,0) ;
\draw (0,.15)--(0,-.15) node[above=1mm]{$c_{-1}$};
\draw (-1.5,.15)--(-1.5,-.15) node[above=1mm]{$c_{-2}$};
\draw  (-1.5,0) --  (1.5,0) ; 
\draw[snake=brace, mirror snake] (-1.5,-0.1)-- node[below] {$|\tau(x_{-2})|$} (0,-0.1) ;
\draw[snake=brace, mirror snake] (0,-0.1)-- node[below] {$|\tau(x_{-1})|$} (1.45,-0.1) ;
\draw (1.5,.15)--(1.5,-.15) node[above=1mm] {$c_{0}=0$};
\draw  (1.5,0) -- (3.4,0) ;
\draw[snake=brace, mirror snake] (1.55,-0.1)-- node[below] {$|\tau(x_{0})|$} (3.35,-0.1) ;
\draw (3.4,.15)--(3.4,-.15) node[above=1mm] {$c_{1}$};
\draw  (3.4,0) -- (4.7,0) ;
\draw[snake=brace, mirror snake] (3.45,-0.1)-- node[below] {$|\tau(x_{1})|$} (4.65,-0.1) ;
\draw (4.7,.15)--(4.7,-.15) node[above=1mm] {$c_{2}$};
\draw[loosely dotted]  (4.7,0) -- (6,0) node[right]{$\tau(x)$};
\end{scope}
\endpgfgraphicnamed
\end{tikzpicture}
\caption{Cutting points of $\tau(x)$: $C_{\tau}(x)=\{c_i\mid\ i\in\Z\}$}
\label{fig:cutting}
\end{center}
\end{figure}
\end{definition}

Note that when $\BT=(\tau_n:V_{n+1}^*\to V_n^*)_{n\ge 0}$ satisfies Property ($P_k$) with $V_n=\{v_{1,n},\dots,v_{m_n,n}\}$, the cutting points by $\tau_n$ are precisely where $v_{m_n,n}$ is followed by $v_{1,n}$. This observation, together with the fact that $\tau_n$'s are injective on symbols, it is easy to conclude that
 $X_{\BT}$ is recognizable. This notion of cutting points will be useful whenever we talk about `desubstituting' or factorizing the sequences using the recognizability of $X_{\BT}$.

\begin{definition}
Let $X$ be a subshift on an alphabet $\CA$.
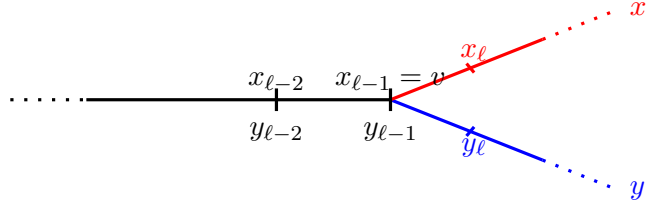
\begin{figure}[ht]
\begin{center}
\begin{tikzpicture}
\beginpgfgraphicnamed{situation-b}
\begin{scope}[very thick]
\draw[loosely dotted] (-1,0)--(0,0);
\draw (0,0)--(4,0);
\draw[blue] (4,0)--(6,-.8);
\draw[loosely dotted,blue] (6,-.8)--(7,-1.2)node[right]{$y$};
\draw[red] (4,0)--(6,.8);
\draw[loosely dotted,red] (6,.8)--(7,1.2) node[right]{$x$};
\draw (2.5,.15)--(2.5,-.15) node[above=1mm]{$x_{\ell-2}$} node[below]{$y_{\ell-2}$};
\draw (4,.15)--(4,-.15) node[above=1mm]{$x_{\ell-1}=v$} node[below]{$y_{\ell-1}$};
\draw[red] (5,.48)--(5.1,.36) node[above]{\textcolor{red}{$x_{\ell}$}} ;
\draw[blue] (5,-.48)--(5.1,-.36) node[below]{\textcolor{blue}{$y_{\ell}$}};
\end{scope}
\endpgfgraphicnamed
\end{tikzpicture}
\caption{Bifurcation point and the signal}
\label{fig:bifurcation}
\end{center}
\end{figure}
Let $\mathcal{AP}:=\mathcal{AP}(X)=\{(x,y)\mid x\sim y \text{ and } x\ne y\}$ be the collection of all non-trivial asymptotic pairs in $X$. We define 
    a function $J:\mathcal{AP}\to (\CA,\Z)$, called as a \emph{bifurcation function}, as $J(x,y)=(v,\ell)$ where $x_{(-\infty,\ell)}=y_{(-\infty,\ell)},\ x_\ell\ne y_\ell$ and $x_{\ell-1}=y_{\ell-1}=v$ (refer to Figure~\ref{fig:bifurcation}). In this case, we say that $v$ is a \emph{bifurcation signal (or simply, a signal)} for $x$. That is, $v$ is a signal for $x$ if there exist $y\in X$ and $\ell\in\Z$ such that $(x,y)\in\mathcal{AP}$ and $J(x,y)=(v,\ell)$. 
\end{definition}    
We remark that we use the notation $\mathcal{AP}$ without specifying the subshift, as this will be clear from the context. 

\begin{lemma}\label{lemma:suffix}
    Let $\BT=(\tau_n\colon V^*_{n+1}\to V^*_n)_{n\ge 0}$ be a directive sequence of  morphisms that satisfies Property ($P_k$) where $V_n=\{v_{1,n},\dots,v_{m_n,n}\}$  for $n\ge 1$.  Fix $n\ge 1$.
   If $v_{i,n}\in V_n$
   is a signal for $x^{(n)}\in X_{\BT}^{n}$, then $1\le i\le k$. \\
   Moreover, for $y^{(n)}\in X_{\BT}^{(n)}$ with $(x^{(n)}, y^{(n)})\in \mathcal{AP}$, and for $\ell\in\Z$ such that $J(x^{(n)},y^{(n)})=(v_{i,n},\ell)$, we have $\{x^{(n)}_\ell,y^{(n)}_\ell\}=\{v_{i,n},v_{i+1,n}\}$. That is, if $v_{i,n}$ is a signal, then at the bifurcation point, it is always followed by either $v_{i,n}$ or $v_{i+1,n}$.

  % We have the following to be true. 
  %   \begin{enumerate}
  %   %    \item Let $x^{(n)}\ne y^{(n)}\in X_{\BT}^{n}$ be asymptotic. Then $Sp(x^{(n)},u^{(n)})=(v_{i,n},\ell)$ for some $\ell\in\Z$ and $1\le i\le k$. 
  %   \item If $v\in V_n$ is a signal for some sequence $x^{(n)}\in X_{\BT}^{n}$, then $v=v_{i,n}$ for some $1\le i\le k$.
  %   \item Let $x^{(n)}\ne y^{(n)}\in X_{\BT}^{n}$ be asymptotic. Then for $J(x^{(n)},y^{(n)})=(v_{i,n},\ell)$ for some $\ell\in\Z$ and $1\le i\le k$, we have $\{x^{(n)}_\ell,y^{(n)}_\ell\}=\{v_{i,n},v_{i+1,n}\}$. That is, if $v_{i,n}$ is a signal for a sequence in $X_{\BT}^{(n)}$, then it is always followed by either $v_{i,n}$ or $v_{i+1,n}$.
  %   \end{enumerate}
\end{lemma}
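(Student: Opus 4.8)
The plan is to analyze the leftmost coordinate at which an asymptotic pair in $X_{\BT}^{(n)}$ bifurcates, using the block decomposition coming from recognizability of $\tau_n$. Recall from the discussion following the definition of cutting points that, under Property ($P_k$), the cutting points of a point of $X_{\BT}^{(n)}$ with respect to $\tau_n$ are exactly the positions where $v_{m_n,n}$ is immediately followed by $v_{1,n}$; in particular every $z\in X_{\BT}^{(n)}$ decomposes into consecutive blocks, each of the form $\tau_n(a)$ for a unique $a\in V_{n+1}$.

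First I would record the elementary but crucial fact that \emph{no image $\tau_n(a)$ is a proper prefix of another image $\tau_n(b)$ with $a\neq b$}. Indeed, each $\tau_n(a)$ begins with one of the distinguished prefixes of Remark~\ref{rem:prefix}, namely $v_{1,n}^2\cdots v_{i-1,n}^2 v_{i,n}\cdots v_{k+1,n}$ for $1\le i\le k+1$, or $v_{1,n}^i v_{2,n}$ for $i>k+1$. A direct comparison shows that two such distinguished prefixes belonging to distinct letters are never prefix dependent (they always disagree within the shorter one). Hence if $\tau_n(a)$ were a prefix of $\tau_n(b)$, their distinguished prefixes would be prefix dependent, forcing $a=b$.

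Next, take $(x,y)\in\mathcal{AP}$ with $J(x,y)=(v,\ell)$, so that $x,y$ agree on $(-\infty,\ell)$, differ at $\ell$, and $x_{\ell-1}=y_{\ell-1}=v$. Agreement on $(-\infty,\ell)$ forces the cutting points of $x$ and $y$ to coincide on $(-\infty,\ell-1]$; let $c$ be the largest such common cutting point with $c\le\ell-1$. I would first rule out that $\ell$ is itself a cutting point: were it a cutting point of, say, $x$, then $x_\ell=v_{1,n}$ and $x_{[c,\ell)}$ would be a full image $\tau_n(a)$, whereas the block of $y$ starting at $c$ would extend beyond $\ell$ (since $x_\ell\ne y_\ell$ gives $y_\ell\ne v_{1,n}$, so $\ell$ is not a cutting point of $y$), exhibiting $\tau_n(a)$ as a proper prefix of another image and contradicting the previous paragraph. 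Thus $\ell$ lies strictly inside a common block: both $x$ and $y$ have their block starting at $c$ extend past $\ell$, equal to $\tau_n(a)$ and $\tau_n(b)$ respectively. These agree on their first $t:=\ell-c\ge 1$ letters and differ at position $t$, so $a\neq b$, and by the non-prefix-dependence of the distinguished prefixes this first difference falls within the distinguished prefixes of $a$ and $b$.

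It then remains to carry out a finite case check comparing the finitely many types of distinguished prefixes: for any two distinct such prefixes, I would verify that at their first point of disagreement the preceding letter is some $v_{i,n}$ with $1\le i\le k$ and the two differing letters are exactly $v_{i,n}$ and $v_{i+1,n}$. For a pair coming from indices $i<j\le k+1$ the first disagreement sits right after the common segment $v_{1,n}^2\cdots v_{i-1,n}^2 v_{i,n}$, giving signal $v_{i,n}$ and followers $\{v_{i,n},v_{i+1,n}\}$; every remaining pair first disagrees after an initial run of $v_{1,n}$'s, giving signal $v_{1,n}$ and followers $\{v_{1,n},v_{2,n}\}$. Translating back through $c$ yields $v=v_{i,n}$ with $1\le i\le k$ and $\{x_\ell,y_\ell\}=\{v_{i,n},v_{i+1,n}\}$, as required. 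The main obstacle is the cutting-point bookkeeping of the third paragraph — especially the reduction excluding a bifurcation at a cutting point via the proper-prefix fact — after which the statement reduces to the transparent finite comparison of distinguished prefixes.
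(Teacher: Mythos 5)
Your proof is correct and takes essentially the same route as the paper's: desubstitute back to the last cutting point before the bifurcation, observe that $x$ and $y$ then sit inside two images $\tau_n(a)\neq\tau_n(b)$ sharing the prefix up to the bifurcation, and conclude by comparing the distinguished prefixes of Remark~\ref{rem:prefix}. Your write-up is merely more explicit in two places where the paper is terse — ruling out a bifurcation at a cutting point via the ``no image is a proper prefix of another'' observation, and carrying out the finite comparison of distinguished prefixes that simultaneously yields $1\le i\le k$ and $\{x^{(n)}_\ell,y^{(n)}_\ell\}=\{v_{i,n},v_{i+1,n}\}$.
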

\begin{proof}
     Let $x^{(n)}\in X_{\BT}^{(n)}$ and $v_{i,n}\in V_n$ be a signal for $x^{(n)}$. That is, there exist $y^{(n)}\in X_{\BT}^{(n)}$ and $\ell\in \Z$ such that $x^{(n)}_{(-\infty,\ell)}= y^{(n)}_{(-\infty,\ell)},\ x^{(n)}_\ell\ne y^{(n)}_\ell$ and $x^{(n)}_{\ell-1}=v_{i,n}$.
    Choose a maximum of $m<\ell-1$, such that $x^{(n)}_{[m,m+1]}=v_{m_n,n}v_{1,n}$. Hence, we have 
    \[
    x^{(n)}_{[m,\ell)}=y^{(n)}_{[m,\ell)}=v_{m_n,n}v_{1,n}\dots v_{i,n}
    \]
    having no other occurrence of $v_{m_n,n}v_{1,n}$ as a subword, but $x^{(n)}_\ell\ne y^{(n)}_\ell$. %By recognizability of $X_{\BT},$ let $x^{(n+1)},y^{(n+1)}\in X_{\BT}^{(n+1)}$ be such that $x^{(n)}=S^r(\tau_n(x^{(n+1)}))$ and $y^{(n)}=S^t(\tau_n(y^{(n+1)}))$ for some $r,t\in\Z$. 
    
    Since the cutting points by $\tau_n$ are precisely where $v_{m_n,n}$ is followed by $v_{1,n}$ and since $X_{\BT}$ is recognizable, there exist $u,u'\in V_{n+1}$ where $\tau_n(u)$ and $\tau_n(u')$ have the same prefix starting with $v_{1,n}$ and ending with $v_{i,n}$. Since $x^{(n)}_\ell\ne y^{(n)}_\ell$, we have $u\ne u'$. Hence by Remark~\ref{rem:prefix}, we have $1\le i\le k$.  
    %If $j>k$, then we can have two possible scenarios. 
  %  \begin{enumerate}
       % \item[(a)] There exists a word $w_{j,n}$ (possibly empty) that do not contain $v_{m_n,n}v_{1,n}$ as a factor and $v,v'\in V_n$ such that $v_{m_n,n}v_{1,n}^2\dots v_{i,n}^2\dots v_{k+1,n}w_{j,n}v_{j,n}v$ is a suffix of $x^{(n)}_{(-\infty,\ell]}$, and $v_{m_n,n}v_{1,n}^2\dots v_{i,n}^2 v_{i+1,n}\dots v_{k+1,n}w_{j,n}v_{j,n}v'$ is a suffix of $y^{(n)}_{(-\infty,\ell]}$, for some $1\le i\le k$. Hence, there exist $u\ne u'\in V_{n+1}$ such that $v_{1,n}^2\dots v_{i,n}^2v_{i+1,n}$ is a prefix of $\tau_{n+1}(u)$ and $v_{1,n}^2\dots v_{i,n}^2v_{i+1,n}$ is a prefix of $\tau_{n+1}(u')$.
     %  \item[(a)] 
   % \item[(b)] There exists a word $w'_{j,n}$ (possibly empty) that do not contain $v_{m_n,n}v_{1,n}$ as a factor and $v,v'\in V_n$ such that $v_{m_n,n}v_{1,n}^tw'_{j,n}v_{j,n}v$ is a suffix of $x^{(n)}_{(-\infty,\ell]}$ and $v_{m_n,n}v_{1,n}^tw'_{j,n}v_{j,n}v'$ is a suffix of $y^{(n)}_{(-\infty,\ell]}$ for some $t>k+1$: Hence, there exist $u\ne u'\in V_{n+1}$ such that  $v_{1,n}^tw'_{j,n}v_{j,n}v$ is a prefix of $\tau_{n+1}(u)$ and $v_{1,n}^tw'_{j,n}v_{j,n}v'$ is a prefix of $\tau_{n+1}(u')$. 
   % \end{enumerate}
     % Observe that neither of these scenarios is possible, as the morphisms satisfy Property ($P_k$). \\ 
%  We skip the second part of the proof, as it can be shown using the same argument. 

 For the second part of the proof, without loss of generality, let $x^{(n)}_\ell=v_{j,n}$ for $j\ne i,i+1$. As before, choose a maximum of $m<\ell-1$, such that $x^{(n)}_{[m,m+1]}=v_{m_n,n}v_{1,n}$. Then $x^{(n)}_{[m,\ell]}=v_{m_n,n}v_{1,n}\dots v_{k+1,n}\dots v_{i,n}v_{j,n}$. In particular, we have 
 \[
 x^{(n)}_{[m,\ell)}=y^{(n)}_{[m,\ell)}=v_{m_n,n}v_{1,n}\dots v_{k+1,n}\dots v_{i,n}
 \]
having no other occurrence of $v_{m_n,n}v_{1,n}$ as a subword, but $x^{(n)}_\ell\ne y^{(n)}_\ell$ which is a contradiction as before.  
\end{proof}

\begin{lemma}\label{Lemma:splittingpoint}
Let $\BT=(\tau_n)_{n\ge 0}$ be a directive sequence of morphisms that satisfies Property ($P_k$) with $V_n=\{v_{1,n},\dots,v_{m_n,n}\}$  for $n\ge 1$. 
    Let $x,y\in X_{\BT}$ be such that $x_{(-\infty,0)}=y_{(-\infty,0)}$ and $x_0\ne y_0$. For a fixed $n\ge 1$, choose $x^{(n)},y^{(n)}\in X_{\BT}^{(n)}$ and $0\le r<|\tau_{[0,n)}(x^{(n)}_0)|,\ 0\le t<|\tau_{[0,n)}(y^{(n)}_0)|$ such that $x=S^r(\tau_{[0,n)}(x^{(n)}))$ and $y=S^t(\tau_{[0,n)}(y^{(n)}))$.     
    Then the following hold;
    \begin{enumerate}
        \item $x^{(n)}\sim y^{(n)}$. In particular, $x^{(n)}_{(-\infty,0)}=y^{(n)}_{(-\infty,0)}$ and $x^{(n)}_0\ne y^{(n)}_0$. 
        \item $r=t$ and $x_{[-r,0)}=y_{[-r,0)}$ is the common prefix of $\tau_{[0,n)}(v_{i,n})$ and $\tau_{[0,n)}(v_{i+1,n})$, where $x^{(n)}_{-1}=y^{(n)}_{-1}=v_{i,n}$ for some $1\le i\le k$. 
    \end{enumerate}
\end{lemma}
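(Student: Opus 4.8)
The plan is to prove both statements simultaneously by peeling off one morphism at a time, reducing the claim to a single-level statement that is then iterated. Concretely, I will show that the property ``$a_{(-\infty,0)}=b_{(-\infty,0)}$ and $a_0\neq b_0$'' (a bifurcation at the origin) descends from level $m$ to level $m+1$: if $a,b\in X_{\BT}^{(m)}$ bifurcate at $0$ and $a=S^{r_m}(\tau_m(a'))$, $b=S^{t_m}(\tau_m(b'))$ are their centered $\tau_m$-factorizations, then $a',b'\in X_{\BT}^{(m+1)}$ again bifurcate at $0$, $r_m=t_m$, and $a_{[-r_m,0)}$ is the common prefix of $\tau_m(a'_0)$ and $\tau_m(b'_0)$. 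Granting this single-level reduction, chaining it for $m=0,1,\dots,n-1$ starting from the hypothesis on $x,y$ yields $x^{(n)}\sim y^{(n)}$ with $x^{(n)}_{(-\infty,0)}=y^{(n)}_{(-\infty,0)}$ and $x^{(n)}_0\neq y^{(n)}_0$, which is exactly part~(1); composing the per-level offsets and prefixes will give part~(2).

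The single-level reduction splits according to whether $m=0$ or $m\geq 1$. For the base level $m=0$ I use that $\tau_0$ is a hat morphism: every symbol of the level-$0$ alphabet occurs in the image of exactly one letter of $V_1$ and at a unique position inside it, so each level-$0$ coordinate determines both its source letter and its offset within the corresponding block. Hence if positions $-1$ and $0$ of $x$ belonged to the same $\tau_0$-block, then $x_{-1}=y_{-1}$ would force $x_0=y_0$, contradicting the hypothesis; therefore position $0$ opens a fresh block for both $x$ and $y$, giving $r_0=t_0=0$, and distinctness of the hat images at their first letters gives $x^{(1)}_0\neq y^{(1)}_0$, with agreement on $(-\infty,0)$ inherited block by block.

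For a step level $m\geq 1$ the morphism $\tau_m$ is proper, and the cutting points of $\tau_m$ are exactly the positions where the last letter $v_{m_m,m}$ of $V_m$ is immediately followed by $v_{1,m}$. Since $a\sim b$ bifurcate at $0$, the signal is $a_{-1}=b_{-1}=v_{i,m}$, and \cref{lemma:suffix} forces $1\leq i\leq k$; as $k<m_m$, we have $v_{i,m}\neq v_{m_m,m}$, so position $0$ is \emph{not} a $\tau_m$-cutting point. Consequently the $\tau_m$-block containing position $0$ begins at some coordinate $-r_m\leq -1$ lying in the common region $(-\infty,0)$. Its left boundary is a cutting point determined purely by the common left content $a_{(-\infty,0)}=b_{(-\infty,0)}$, and there is no cutting point of either sequence strictly between it and $0$; hence this boundary is shared, i.e.\ $r_m=t_m$. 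The two blocks $\tau_m(a'_0)$ and $\tau_m(b'_0)$ then share their length-$r_m$ prefix (positions $[-r_m,-1]$) and differ at position $0$, so $a'_0\neq b'_0$, while all blocks further to the left coincide, giving the bifurcation at $0$ at level $m+1$.

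Assembling part~(2) is then bookkeeping: chaining $r_m=t_m$ across levels gives $r=t$ for the composite $\tau_{[0,n)}$, and at the top \cref{lemma:suffix} identifies $x^{(n)}_{-1}=y^{(n)}_{-1}=v_{i,n}$ with $1\leq i\leq k$ and $\{x^{(n)}_0,y^{(n)}_0\}=\{v_{i,n},v_{i+1,n}\}$. The word $x_{[-r,0)}=y_{[-r,0)}$ is the length-$r$ prefix of both $\tau_{[0,n)}(x^{(n)}_0)$ and $\tau_{[0,n)}(y^{(n)}_0)$, and since these composite images differ exactly at index $r$ (because $x_0\neq y_0$), this word is precisely the common prefix of $\tau_{[0,n)}(v_{i,n})$ and $\tau_{[0,n)}(v_{i+1,n})$. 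The main obstacle is the step level $m\geq 1$: the whole argument hinges on observing that \cref{lemma:suffix} pins the signal among the first $k$ letters, so that (using $m_m>k$) position $0$ can never coincide with a cutting point; this is exactly what keeps the bifurcation strictly inside a block whose left boundary sits in the already-agreed region, forcing $r_m=t_m$ and letting the bifurcation descend. The hat base case and the offset/prefix bookkeeping are routine by comparison.
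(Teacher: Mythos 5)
Your proof is correct, but it is organized differently from the paper's. The paper argues in one shot at level $n$: it gets $x^{(n)}\sim y^{(n)}$ by citing the local property of recognizability (\cref{lemma:local}), identifies the signal via \cref{lemma:suffix}, and then pins the bifurcation of $x^{(n)},y^{(n)}$ at $\ell=0$ by invoking the prefix-independence of the composite images $\tau_{[0,n)}(v_{i,n})$ and $\tau_{[0,n)}(v_{i+1,n})$ (\cref{lemma:prefix_indep}). You instead run a bottom-up induction, descending one morphism at a time using the cutting-point characterization (a block boundary occurs exactly where $v_{m_m,m}$ is followed by $v_{1,m}$) together with \cref{lemma:suffix} to rule out a cutting point at the bifurcation position, plus a separate hat-morphism base case. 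This buys self-containedness — you effectively re-derive the content of \cref{lemma:local} and of the prefix-independence lemma rather than quoting them — at the cost of length and some offset bookkeeping; the two arguments rest on the same underlying facts. Two small points to make explicit: when you pass from ``the blocks to the left coincide as words'' to $a'_j=b'_j$ for $j<0$, you need injectivity of $\tau_m$ on symbols (noted in \cref{rem:prefix}); and your observation that position $0$ cannot be a cutting point forces $r_m\ge 1$, which is what guarantees the shared left block boundary lies in the agreed region $(-\infty,0)$.
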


\begin{proof}
 %Let $\mathcal{W}_n=\tau_{[0,n)}(V_n)$. 
    Clearly, $x^{(n)}\sim y^{(n)}$ 
    using the local property of recognizability (Lemma~\ref{lemma:local}) of $X_{\BT}$. Choose $\ell\le 0$ to be that $x^{(n)}_{(-\infty,\ell)}=y^{(n)}_{(-\infty,\ell)}$, and $x^{(n)}_\ell\ne y^{(n)}_\ell.$ By Lemma~\ref{lemma:suffix}, $x^{(n)}_{\ell-1}=y^{(n)}_{\ell-1}=v_{i,n}$ for some $1\le i\le k$ and it is followed  by either $v_{i,n}$ or $v_{i+1,n}$. Without loss of generality, assume that $x^{(n)}_\ell=v_{i,n}$ and $y^{(n)}_\ell=v_{i+1,n}$. Let $u=u_1\dots u_{k_i}=\tau_{[0,n)}(v_{i,n})$ and $w=w_1\dots w_{k_{i+1}}=\tau_{[0,n)}(v_{i+1,n})$, where $k_j=|\tau_{[0,n)}(v_{j,n})|$.
    By Lemma~\ref{lemma:prefix_indep}, $u$ and $w$ are not prefix dependent. Let $s\ge 1$ be the smallest number such that $u_s\ne w_s$. Clearly, the    
    bifurcation of $x$ and $y$ is at $x_0=u_s$ and $y_0=w_s$. That is, $J(x,y)=(u_{s-1},0)$ if $s>1$, and $J(x,y)=(u_{k_i},0)$, if $s=1$. By the definition of recognizability, we can conclude that $\ell=0$. Using this, the rest of the results are straightforward. 
\end{proof}

\begin{lemma}\label{Lemma:prefix}
Let $\BT=(\tau_n\colon V^*_{n+1}\to V^*_n)_{n\ge 0}$ be a directive sequence of  morphisms that satisfies Property ($P_k$) where $V_n=\{v_{1,n},\dots,v_{m_n,n}\}$  for $n\ge 1$.  Fix $n\ge 1$.
    %Let $X_{\BT}^{(n)}$ be the $\mathcal{S}$-adic subshift at level $n$.   
   % \textcolor{blue}{Also, assume that $X_{\BT}$ is recognizable.}
      Let $(x^{(n)}, y^{(n)})\in\mathcal{AP}(X_{\BT}^{n})$. Then we have $J(x^{(n)},y^{(n)})=(v_{i,n},\ell)$ for some $\ell\in\Z$ and $1\le i\le k$ with $x^{(n)}_{[\ell-2i,\ell)}=v_{m_n,n}v_{1,n}^2\dots v_{i-1,n}^2v_{i,n}$. That is, if $v_{i,n}$ is a signal for a sequence in $X_{\BT}^{(n)}$, then at the bifurcation point, it is always preceded by $v_{m_n,n}v_{1,n}^2\dots v_{i-1,n}^2$.
\end{lemma}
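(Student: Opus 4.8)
The plan is to descend one level, from $X_{\BT}^{(n)}$ to $X_{\BT}^{(n+1)}$ through the morphism $\tau_n$, and then read the prefix directly off the prescribed shape of the images under $\tau_n$. First I would record what is already available: by \cref{lemma:suffix}, any non-trivial asymptotic pair in $X_{\BT}^{(n)}$ bifurcates after a signal $v_{i,n}$ with $1\le i\le k$, and at the bifurcation point $\{x^{(n)}_\ell,y^{(n)}_\ell\}=\{v_{i,n},v_{i+1,n}\}$. Thus the statement $J(x^{(n)},y^{(n)})=(v_{i,n},\ell)$ with $1\le i\le k$ is nothing new, and only the assertion about $x^{(n)}_{[\ell-2i,\ell)}$ remains to be proved. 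Note also that since $x^{(n)}$ and $y^{(n)}$ agree on $(-\infty,\ell)$, the word $x^{(n)}_{[\ell-2i,\ell)}$ is common to both, so no choice of which point is which is involved.

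Next I would factorize. Since $\BT$ is recognizable, $\tau_n$ is recognizable in $X_{\BT}^{(n+1)}$, so we may write $x^{(n)}=S^r(\tau_n(z))$ and $y^{(n)}=S^t(\tau_n(z'))$ for the unique $\tau_n$-factorizations $z,z'\in X_{\BT}^{(n+1)}$. By the local property of recognizability (\cref{lemma:local}), exactly as in \cref{Lemma:splittingpoint}(1), one gets $z\sim z'$; let $j$ be the bifurcation index, so that $z_{(-\infty,j)}=z'_{(-\infty,j)}$ and $z_j\ne z'_j$. The crucial step is to apply \cref{lemma:suffix} \emph{one level up}: the signal $z_{j-1}=z'_{j-1}$ equals some $v_{p,n+1}$ with $1\le p\le k$, and $\{z_j,z'_j\}=\{v_{p,n+1},v_{p+1,n+1}\}$. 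In particular $p,p+1\le k+1$, so both letters $z_j,z'_j$ lie in the range $\{v_{1,n+1},\dots,v_{k+1,n+1}\}$ governed by condition (2) of Property~($P_k$).

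Finally I would compute the common prefix of the two relevant images and transport it back. By condition (2), $\tau_n(v_{p,n+1})$ begins with $v_{1,n}^2\cdots v_{p-1,n}^2v_{p,n}v_{p+1,n}\cdots$ while $\tau_n(v_{p+1,n+1})$ begins with $v_{1,n}^2\cdots v_{p-1,n}^2v_{p,n}^2v_{p+1,n}\cdots$, so their maximal common prefix is $v_{1,n}^2\cdots v_{p-1,n}^2v_{p,n}$, of length $2p-1$, after which one image continues with $v_{p+1,n}$ and the other with $v_{p,n}$. Matching this against \cref{lemma:suffix} identifies the level-$n$ signal as $v_{p,n}$, whence $i=p$, and places the bifurcation at the $(2i)$-th letter of these images. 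Because $x^{(n)}$ and $y^{(n)}$ agree on $(-\infty,\ell)$, the cutting points of their $\tau_n$-factorizations (the occurrences of $v_{m_n,n}v_{1,n}$) coincide on $(-\infty,\ell)$, so the images $\tau_n(z_j)$ and $\tau_n(z'_j)$ begin at the same position, namely $\ell-(2i-1)$, giving $x^{(n)}_{[\ell-2i+1,\ell)}=v_{1,n}^2\cdots v_{i-1,n}^2v_{i,n}$. The single preceding letter $x^{(n)}_{\ell-2i}$ is the last letter of $\tau_n(z_{j-1})=\tau_n(v_{p,n+1})$, which by condition (4) of Property~($P_k$) is $v_{m_n,n}$. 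Concatenating yields $x^{(n)}_{[\ell-2i,\ell)}=v_{m_n,n}v_{1,n}^2\cdots v_{i-1,n}^2v_{i,n}$, as claimed.

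The main obstacle is precisely the crucial step above. At level $n$ alone, the shared-prefix information extracted in the proof of \cref{lemma:suffix} does not by itself determine the length of the common prefix: two distinct but non-consecutive images, or an image carrying a long $v_{1,n}^{\,j}$ prefix coming from condition (3), could a priori share with another image a prefix ending in $v_{i,n}$ of the wrong length, which would break the count $2i$. Passing to level $n+1$ and invoking \cref{lemma:suffix} there is exactly what forces the two governing images to be the consecutive pair $\tau_n(v_{p,n+1}),\tau_n(v_{p+1,n+1})$ with $p\le k$ (both of the ``short'' type covered by condition (2)), after which the prefix is pinned down uniquely; the remainder is the elementary length bookkeeping.
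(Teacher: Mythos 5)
Your proof is correct, and it reorganizes the argument in a way that differs from the paper's. The paper proves the lemma by contradiction with a case split: for $i>1$ it argues purely at level $n$ (if the prefix were not $v_{m_n,n}v_{1,n}^2\cdots v_{i-1,n}^2v_{i,n}$, the block between the last occurrence of $v_{m_n,n}v_{1,n}$ and the bifurcation would force two distinct letters of $V_{n+1}$ whose images share a forbidden prefix, contradicting \cref{rem:prefix}); only in the residual subcase of $i=1$ with $x^{(n)}_{[\ell-t-2,\ell)}=v_{m_n,n}v_{1,n}^{t+1}$, $t>0$ --- where the level-$n$ argument genuinely fails because condition (3) permits two images sharing the prefix $v_{1,n}^{t+1}$ --- does the paper ascend to level $n+1$ and invoke \cref{Lemma:splittingpoint} and \cref{lemma:suffix} there to rule out a letter of index $>k+1$ appearing in an asymptotic pair. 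You instead ascend to level $n+1$ from the outset for every $i$: applying \cref{lemma:suffix} one level up pins the two relevant level-$(n+1)$ letters down to a consecutive pair $v_{p,n+1},v_{p+1,n+1}$ with $p\le k$, after which the prefix $v_{m_n,n}v_{1,n}^2\cdots v_{p-1,n}^2v_{p,n}$ and the identification $i=p$ follow by directly comparing the two images prescribed by conditions (2) and (4). This is a direct and uniform argument that absorbs the paper's troublesome $i=1$ subcase automatically, at the cost of invoking recognizability and the alignment of cutting points (as in \cref{Lemma:splittingpoint}) even in the cases the paper handles elementarily; your closing paragraph correctly diagnoses why the level-$n$ argument alone cannot control the prefix length, which is precisely the difficulty the paper isolates in its $i=1$ case. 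Both proofs rest on the same structural facts, so nothing is gained or lost in generality; yours is somewhat cleaner to verify because the bifurcating pair of images is identified once and for all before any counting is done.
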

\begin{proof}
 Let $i>1$ and assume that $x^{(n)}_{[\ell-2i,\ell)}\ne v_{m_n,n}v_{1,n}^2\dots v_{i-1,n}^2v_{i,n}$. Then, as in~\ref{lemma:suffix}, choose a maximum of $m<\ell-1$, such that $x^{(n)}_{[m,m+1]}=v_{m_n,n}v_{1,n}$. We have 
 \[
 x^{(n)}_{[m,\ell)}=y^{(n)}_{[m,\ell)}=v_{m_n,n}v_{1,n}\dots v_{k+1,n}\dots v_{i,n}
 \]
having no other occurrence of $v_{m_n,n}v_{1,n}$ as a subword, but $x^{(n)}_\ell\ne y^{(n)}_\ell$, which is a contradiction as before.  

   For $i=1$, we want to prove that $x^{(n)}_{[\ell-2,\ell)}=v_{m_n,n}v_{1,n}$. Assume this is not the case. We further divide it into two cases: (1) $v_{1,n}$ is preceded by $v_{m_n,n}v_{1,n}^t$ for some $t>0$ or (2) otherwise. Note that the second case is not possible, as argued for $i>1$. But for the first case, we will find no contradiction if we argue only as above since it is possible to find two distinct $u,u'\in V_{n+1}$ such that $\tau_n(u)$ and $\tau_n(u')$ have the same prefix $v_{1,n}^{t+1}$. 
   Let us assume that this is the case. That is, $x^{(n)}_{[\ell-t-2,\ell)}=y^{(n)}_{[\ell-t-2,\ell)}=v_{m_n,n}v_{1,n}^{t+1}$ for some $t> 0$.
   By Lemma~\ref{lemma:suffix}, $v_{1,n}^{t+1}$ is followed by $v_{1,n}$ or $v_{2,n}$.     Without loss of generality, let $\tau_n(u)$ have a prefix given by $v_{1,n}^{t+2}$, and let $\tau_n(u')$ have a prefix given by $v_{1,n}^{t+1}v_{2,n}$. This implies that $u=v_{i,n+1}$ for some $i>k+1$ and $u'=v_{t+1,n+1}$.  Using Lemma~\ref{Lemma:splittingpoint} at level $n+1$, there exists a signal $v_{i,n+1}$ for some $1\le i\le k$ that is followed by $u$ and $u'$. This is not possible by Lemma~\ref{lemma:suffix}. 
\end{proof}

% \begin{lemma}\label{lemma:atmostk}
% Let $\BT=(\tau_n)_{n\ge 0}$ be the directive sequence of  morphisms that satisfies Property ($P_k$)  
% and $X_{\BT}$ be recognizable. 
%     There are at most $k$ asymptotic components in $X_{\BT}$ and each asymptotic component contains exactly two non-trivial elements.
% \end{lemma}
% \begin{proof}
   
% \end{proof}

\begin{lemma}\label{Lemma:twocomp}
    Let $\BT=(\tau_n\colon V^*_{n+1}\to V^*_n)_{n\ge 0}$ be a directive sequence of morphisms that satisfies Property ($P_k$) with $V_n=\{v_{1,n},\dots,v_{m_n,n}\}$  for $n\ge 1$. Then, each asymptotic component of $X_{\BT}$ contains exactly two elements.
\end{lemma}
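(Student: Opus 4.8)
The plan is to prove two separate facts: (a) the two points of any nontrivial asymptotic pair lie in distinct $S$-orbits, so every asymptotic component consists of at least two orbits; and (b) every point of $X_{\BT}$ occurring in a nontrivial asymptotic pair has a \emph{unique} asymptotic partner, which forces each component to consist of at most two orbits. Since an asymptotic component is an equivalence class of orbits under $\sim_{\text{Orb}}$, and since $\sim$ is shift-equivariant (the partner of $S^m x$ is $S^m$ of the partner of $x$), the two facts together identify each component with a pair $\{\text{Orb}_S(x),\text{Orb}_S(y)\}$, i.e.\ exactly two elements.

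For (a), suppose $x\sim y$ with $x\ne y$ but $y=S^m x$ for some $m\ne 0$. Then $x_j=x_{j+m}$ for all sufficiently negative $j$, so the left tail of $x$ is eventually periodic; taking arbitrarily long powers of the period and using compactness produces a periodic point of $X_{\BT}$, contradicting the aperiodicity of the infinite minimal subshift. Hence $\text{Orb}_S(x)\ne\text{Orb}_S(y)$. For the structural setup of (b), I would fix $(x,y)\in\mathcal{AP}$, shift so the bifurcation is at $0$, and apply Lemma~\ref{Lemma:splittingpoint} repeatedly to lift the pair to every level: at level $n$ the factorizations $(x^{(n)},y^{(n)})$ again bifurcate at $0$, with signal $v_{i_n,n}=x^{(n)}_{-1}$ and, by Lemma~\ref{lemma:suffix}, with $\{x^{(n)}_0,y^{(n)}_0\}=\{v_{i_n,n},v_{i_n+1,n}\}$. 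Comparing the value $x^{(n)}_{-1}$ with the common prefix of $\tau_n(v_{i_{n+1},n+1})$ and $\tau_n(v_{i_{n+1}+1,n+1})$ supplied by part (2) of Lemma~\ref{Lemma:splittingpoint} shows $i_n=i_{n+1}$, so the signal index $i\in\{1,\dots,k\}$ is constant in $n$. Because $\tau_{[0,n)}(v_{i,n})$ and $\tau_{[0,n)}(v_{i+1,n})$ are not prefix dependent (Lemma~\ref{lemma:prefix_indep}, see also Remark~\ref{rem:prefix}), the common left tail $x_{(-\infty,0)}=y_{(-\infty,0)}$ admits exactly two right extensions at coordinate $0$, namely the two branches occurring after the first disagreement of these images. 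Thus any $z\sim x$ bifurcating at $0$ has $z_0\in\{x_0,y_0\}$.

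The remaining, and main, step is to show that the branch at $0$ determines the whole point and that no second bifurcation occurs, so that the asymptotic class $\{z:z\sim x\}$ is exactly $\{x,y\}$. Here I would invoke Lemma~\ref{Lemma:prefix}: a signal $v_{i,n}$ is always immediately preceded by $v_{m_n,n}v_{1,n}^2\dots v_{i-1,n}^2$. Immediately to the right of the bifurcation, however, the letter following the branch is preceded by the branch letter ($v_{i,n}$ or $v_{i+1,n}$) rather than by this forced pattern, so coordinate $1$ cannot be a signal position; hence $z_1$ is determined, and propagating this up the tower via recognizability (Lemma~\ref{lemma:local}) forces the entire right tail. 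Consequently $z_0=x_0$ yields $z=x$ and $z_0=y_0$ yields $z=y$. I expect the genuinely delicate point to be exactly this claim that a point has a \emph{unique} signal position: one must rule out that the forced continuation to the right of the bifurcation ever re-enters a configuration admitting two extensions. I would dispatch a hypothetical second signal position by shifting it to $0$, lifting it through all levels with Lemma~\ref{Lemma:splittingpoint}, and contradicting the forced neighborhoods of Lemma~\ref{Lemma:prefix} and Lemma~\ref{lemma:suffix}. Combining $\{z:z\sim x\}=\{x,y\}$ with (a) and the shift-equivariance of the partner map, every asymptotic component equals $\{\text{Orb}_S(x),\text{Orb}_S(y)\}$ and therefore has exactly two elements.
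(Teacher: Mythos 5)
Your proposal is correct and takes essentially the same route as the paper: both arguments reduce the lemma to showing that a single point cannot carry two distinct bifurcation positions, and both dispatch this by lifting to a level $n$ at which the two positions become consecutive and then contradicting the forced preceding word $v_{m_n,n}v_{1,n}^2\cdots v_{i-1,n}^2$ of Lemma~\ref{Lemma:prefix}. The paper merely organizes the case analysis around a triple $(x,y,z)$ with bifurcations at $0$ and at $t\ge 0$ rather than through your ``unique asymptotic partner'' formulation, and it relegates your fact (a) about distinct orbits to the proof of Theorem~\ref{thm:kmany}, where it is actually needed.
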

\begin{proof}

If there are three non-trivial elements in the asymptotic components, without loss of generality, there exist $x,y,z\in X_{\BT}$ and $t\ge 0$ such that $x_{(-\infty,0)}=y_{(-\infty,0)}$, $x_0\ne y_0$, $x_{(-\infty,t)}=z_{(-\infty,t)}$, and $x_t\ne z_t$.
   First, we prove that $t=0$ is not possible. Otherwise, we have $y_0=z_0$ or $x_0,y_0,z_0$ to be all distinct. If $y_0=z_0$, we can replace the tuple $(x,y,z)$ with $(z,x,y)$ and continue the analysis.  For the case where $x_0,y_0,z_0$ are all distinct, we argue as follows. Let $x^{(n)},y^{(n)},z^{(n)}$ be the $\tau_{[0,n)}$-factorization of $x,y,z$, respectively in $X_{\BT}^{(n)}$. Then, by Lemma~\ref{lemma:suffix} and Lemma~\ref{Lemma:splittingpoint}, $x^{(n)}_{-1}=y^{(n)}_{-1}=z^{(n)}_{-1}=v_{i,n}$ for some $1\le i\le k$. If $x_0,y_0,z_0$ are all distinct, then $x^{(n)}_0,y^{(n)}_0,z^{(n)}_0$ are all distinct. But $v_{i,n}$ has to be followed by $v_{i,n}$ or $v_{i+1,n}$ and is therefore not possible.

   To argue the case of $t>0,$ choose $n$ large enough such that $\min\{|\tau_{[0,n)}(v)|:v\in V_n\}>t$. By Lemma~\ref{Lemma:splittingpoint}, $x^{(n)}_{-1}=y^{(n)}_{-1}$ and $x^{(n)}_0\ne y^{(n)}_0$. This implies that $x^{(n)}_{-1}=v_{i,n}$ for some $1\le i\le k$ and is now preceded by $v_{m_n,n}v_{1,n}^2\dots v_{i-1,n}^2$ by Lemma~\ref{Lemma:prefix}. Since $\min\{|\tau_{[0,n)}(v)| : v\in V_n\}>t$,
   $x^{(n)}_0=z^{(n)}_0$ and $x^{(n)}_1\ne z^{(n)}_1$. This implies that $x^{(n)}_0=v_{j,n}$ for some $1\le j\le k$ and is now preceded by $v_{m_n,n}v_{1,n}^2\dots v_{i-1,n}^2v_{i,n}$, which is a contradiction to Lemma~\ref{Lemma:prefix}.
\end{proof}

\subsection{Main results} Now we state and prove one of our main results giving a subshift of $k$ many asymptotic components. We also give some interesting corollaries of our result. 
\begin{theorem}\label{lemma:kasym}
    Let $\BT=(\tau_n\colon V^*_{n+1}\to V^*_n)_{n\ge 0}$ be a directive sequence of morphisms that satisfies Property ($P_k$)  
with $V_n=\{v_{1,n},\dots,v_{m_n,n}\}$  for $n\ge 1$. Then, there are exactly $k$ asymptotic components in $X_{\BT}$.
\end{theorem}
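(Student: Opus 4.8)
The goal is to show that the $\mathcal{S}$-adic subshift $X_{\BT}$ has exactly $k$ asymptotic components, given that $\BT$ satisfies Property~($P_k$). The strategy is to combine the structural lemmas already established. By \cref{Lemma:twocomp}, every asymptotic component consists of exactly two elements, so it suffices to exhibit exactly $k$ distinct (non-trivial) orbit-asymptotic classes, equivalently, to produce exactly $k$ asymptotic pairs up to the shift. First I would use the lemmas to pin down precisely \emph{how many} asymptotic pairs can bifurcate at a given signal. By \cref{lemma:suffix} and \cref{Lemma:prefix}, whenever $v_{i,n}$ is a signal at some level $n$, we necessarily have $1\le i\le k$, the signal is preceded by the fixed block $v_{m_n,n}v_{1,n}^2\cdots v_{i-1,n}^2$, and the two coordinates at the bifurcation point form the pair $\{v_{i,n},v_{i+1,n}\}$. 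This rigidity is the engine: each admissible index $i\in\{1,\dots,k\}$ gives rise to essentially one asymptotic pair.

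\textbf{Constructing the $k$ components.}
For the existence direction, for each $1\le i\le k$ I would construct an explicit asymptotic pair $(x^{(i)},y^{(i)})$ whose bifurcation occurs at signal $v_{i,n}$ (for every level $n$, compatibly). Concretely, since $\tau_{[0,n)}(v_{i,n})$ and $\tau_{[0,n)}(v_{i+1,n})$ are not prefix dependent by \cref{lemma:prefix_indep}, one can build two points in $X_{\BT}^{(n)}$ agreeing on $(-\infty,0)$ with $x^{(n)}_{-1}=y^{(n)}_{-1}=v_{i,n}$ and $\{x^{(n)}_0,y^{(n)}_0\}=\{v_{i,n},v_{i+1,n}\}$; pushing these down through the $\tau_{[0,n)}$ and taking a suitable limit (using primitivity to guarantee the backward sequence extends consistently, and recognizability to keep the factorizations coherent across levels) yields a genuine asymptotic pair in $X_{\BT}$ bifurcating with signal $v_{i,n}$. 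This gives at least $k$ asymptotic pairs. I would then check these belong to $k$ \emph{distinct} orbit-asymptotic classes: two such pairs with distinct signal indices $i\ne i'$ cannot be shift-translates of one another, because the signal index is preserved under the factorization (the block preceding the bifurcation, $v_{m_n,n}v_{1,n}^2\cdots v_{i-1,n}^2$, is a shift invariant of the pair and differs for different $i$ by \cref{Lemma:prefix}).

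\textbf{Upper bound and the main obstacle.}
For the upper bound, I would argue that \emph{every} asymptotic pair falls into one of these $k$ classes. Given an arbitrary $(x,y)\in\mathcal{AP}(X_{\BT})$, after applying a shift we may assume the bifurcation is at $0$, so $x_{(-\infty,0)}=y_{(-\infty,0)}$ and $x_0\ne y_0$. Applying \cref{Lemma:splittingpoint} at a fixed level $n$ forces $x^{(n)}\sim y^{(n)}$ with bifurcation signal $v_{i,n}$ for some $1\le i\le k$, and the common prefix $x_{[-r,0)}=y_{[-r,0)}$ is determined as the common prefix of $\tau_{[0,n)}(v_{i,n})$ and $\tau_{[0,n)}(v_{i+1,n})$. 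The point is that this data---the signal index $i$ together with the forced surrounding pattern from \cref{Lemma:prefix}---completely determines the orbit-asymptotic class, so the pair coincides (up to the orbit relation) with the constructed $(x^{(i)},y^{(i)})$. I expect the main obstacle to be the existence/consistency step: verifying that the locally defined bifurcating pairs at each level genuinely assemble into points of $X_{\BT}$ and that the resulting pair is asymptotic rather than merely agreeing on a half-line of bounded length. Handling this cleanly requires invoking primitivity (so the initial blocks can always be extended to the left into the language of $X_{\BT}$) together with \cref{lemma:local}, to ensure the factorizations at successive levels are compatible and the limit lands in the correct orbit-asymptotic class. Once that is in place, combining the lower bound ($k$ distinct classes exist) with the upper bound (no others occur) and \cref{Lemma:twocomp} (each class has exactly two elements) yields exactly $k$ asymptotic components.
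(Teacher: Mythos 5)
Your proposal is correct and follows essentially the same route as the paper: an explicit construction of $k$ bifurcating pairs (realized in the paper via nested cylinders based on the words $\tau_{[0,n)}(v_{1,n}^2\cdots v_{i-1,n}^2v_{i,n}^2)$ and $\tau_{[0,n)}(v_{1,n}^2\cdots v_{i-1,n}^2v_{i,n}v_{i+1,n})$ alternating with $n$), distinctness of the classes via recognizability and centered $\tau_{[0,n)}$-representations, the upper bound via \cref{Lemma:splittingpoint} showing that any asymptotic pair agrees with some $x^i$ on left segments of length $s(n)\to\infty$, and \cref{Lemma:twocomp} to keep the $k$ classes separate. The only step you leave implicit that the paper spells out is that $x^i$ and $y^i$ are not in the same orbit (otherwise both would be periodic, contradicting that the minimal subshift is infinite), which is needed for each constructed pair to actually yield a non-trivial asymptotic component.
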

\begin{proof}
First, we construct $k$ many distinct non-trivial asymptotic pairs. 
For $i=1,\dots,k$, we define two decreasing nested sequences $(C_{w_{i,n},\alpha_{i,n}})_{n\ge 1}$ and $(C_{u_{i,n},\alpha_{i,n}})_{n\ge 1}$ of cylinders on $X_{\BT}$ where $w_{i,n},u_{i,n}\in\mathcal{L}(X_{\BT})$ and $\alpha_{i,n}<0$ are defined inductively. 
Let $\alpha_{i,1}=-|\tau_0(v_{1,1}^2\dots v_{i-1,1}^2v_{i,1})|$ and for $n>1,$
\[\alpha_{i,n}=-|\tau_{[0,n)}(v_{1,n}^2\dots v_{i-1,n}^2v_{i,n})|+\alpha_{i,n-1}.\] 
For $n\ge 1$, define 
\begin{equation*}
   w_{i,n}=  \begin{cases}
      \tau_{[0,n)}(v_{1,n}^2\dots v_{i-1,n}^2v_{i,n}^2), &\text{ if $n$ is odd},\\
      \tau_{[0,n)}(v_{1,n}^2\dots v_{i-1,n}^2v_{i,n}v_{i+1,n}),& \text{ if $n$ is even},
    \end{cases}
\end{equation*}
\begin{equation*}
   u_{i,n}=  \begin{cases}
       \tau_{[0,n)}(v_{1,n}^2\dots v_{i-1,n}^2v_{i,n}v_{i+1,n}),
       &\text{ if $n$ is odd},\\
      \tau_{[0,n)}(v_{1,n}^2\dots v_{i-1,n}^2v_{i,n}^2), & \text{ if $n$ is even}.
    \end{cases}
\end{equation*}
% \begin{eqnarray*}
%     w_{i,2n-1}&=&\tau_{[0,2n-1)}(v_{1,2n-1}^2\dots v_{i,2n-1}^2),\\ w_{i,2n}&=&\tau_{[0,2n)}(v_{1,2n}^2\dots v_{i,2n}v_{i+1,2n}),\\
%     u_{i,2n-1}&=&\tau_{[0,2n-1)}(v_{1,2n-1}^2\dots v_{i,2n-1}v_{i+1,2n-1})
%      \text{ and}\\ 
%      u_{i,2n}&=&\tau_{[0,2n)}(v_{1,2n}^2\dots v_{i,2n}^2). 
% \end{eqnarray*}

\noindent These cylinders are non-empty since  
    \[
    \tau_{[0,n)}(v_{1,n}^2\dots v_{i-1,n}^2v_{i,n}v_{i+1,n}),\tau_{[0,n)}(v_{1,n}^2\dots v_{i-1,n}^2v_{i,n}^2)\in\mathcal{L}(X_{\BT}).
    \]

\begin{figure}[ht]
\begin{center}
\begin{tikzpicture}
\beginpgfgraphicnamed{situation-b}
\begin{scope}[very thick]

\draw[loosely dotted] (-9,0) -- (-8,0) ;
\draw (-8,0)--(0,0);
\draw[blue] (0,0)--(4,0);
\draw[loosely dotted, blue] (4,0)--(5,0);
\draw (0,.1)--(0,-.1) node[above=1mm]{$0$};
\draw (-3,.1)--(-3,-.1) node[above=1mm]{$\alpha_{i,1}$};
\draw[snake=brace, mirror snake] (-3,-0.1)-- node[below] {$\tau_{0}(v_{1,1}^2\dots v_{i-1,1}^2v_{i,1})$} (0,-0.1) ;
\draw (1.5,.1)--(1.5,-.1) ;
\draw[snake=brace, mirror snake,blue] (0,-0.1)-- node[below] {$\tau_{0}(v_{i,1})$} (1.5,-0.1) ;

\draw (-7,.1)--(-7,-.1) node[above=1mm]{$\alpha_{i,2}$};
\draw[snake=brace, mirror snake] (-7,-.8)-- node[below] {$\tau_{[0,2)}(v_{1,2}^2\dots v_{i-1,2}^2v_{i,2})$} (-3,-.8) ;
\draw (3.5,.1)--(3.5,-.1) ;
\draw[snake=brace, mirror snake] (-2.96,-0.8)-- node[below] {$\tau_{[0,2)}(v_{i+1,2})$} (3.5,-0.8) ;
\end{scope}
\endpgfgraphicnamed
\end{tikzpicture}
\caption{Construction of cylinders to get $x^i$}
\label{fig:finite-asym}
\end{center}
\end{figure}
\begin{figure}[ht]
\begin{center}
\begin{tikzpicture}
\beginpgfgraphicnamed{situation-b}
\begin{scope}[very thick]

\draw[loosely dotted] (-9,0) -- (-8,0) ;
\draw (-8,0)--(0,0);
\draw[red] (0,0)--(4,0);
\draw[loosely dotted, red] (4,0)--(5,0);
\draw (0,.1)--(0,-.1) node[above=1mm]{$0$};
\draw (-3,.1)--(-3,-.1) node[above=1mm]{$\alpha_{i,1}$};
\draw[snake=brace, mirror snake] (-3,-0.1)-- node[below] {$\tau_{0}(v_{1,1}^2\dots v_{i-1,1}^2v_{i,1})$} (0,-0.1) ;
\draw (1.8,.1)--(1.8,-.1) ;
\draw[snake=brace, mirror snake, red] (0,-0.1)-- node[below] {$\tau_{0}(v_{i+1,1})$} (1.8,-0.1) ;

\draw (-7,.1)--(-7,-.1) node[above=1mm]{$\alpha_{i,2}$};
\draw[snake=brace, mirror snake] (-7,-.8)-- node[below] {$\tau_{[0,2)}(v_{1,2}^2\dots v_{i-1,2}^2v_{i,2})$} (-3,-.8) ;
\draw (3.5,.1)--(3.5,-.1) ;
\draw[snake=brace, mirror snake] (-2.96,-0.8)-- node[below] {$\tau_{[0,2)}(v_{i,2})$} (3.5,-0.8) ;
\end{scope}
\endpgfgraphicnamed
\end{tikzpicture}
\caption{Construction of cylinders to get $y^i$}
\label{fig:finite-asym1}
\end{center}
\end{figure}
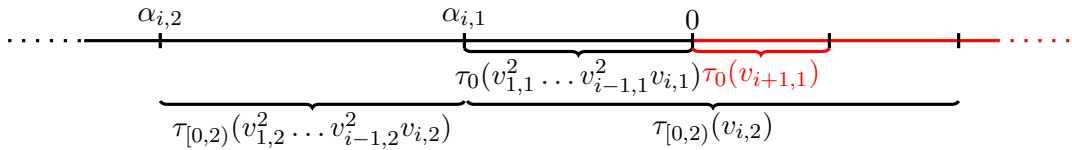    

%When $n$ is odd $\tau_{[0,n+1)}(v_{i+1,n+1})$ is a prefix of $w_{i,n}$ and when $n$ is even $\tau_{[0,n+1)}(v_{i,n+1})$ is a prefix of $w_{i,n}$. 
Since $w_{i,n}\prec w_{i,n+1}$ for all $n\ge 1$, it can be deduced that the cylinders are nested sequences whose diameters converge to zero. We let $\{x^i\}=\bigcap_{n\ge 1}C_{w_{i,n},\alpha_{i,n}}$ and $\{y^i\}=\bigcap_{n\ge 1}C_{u_{i,n},\alpha_{i,n}}$. Clearly, $x^i\ne y^i$ and $x^i\sim y^i$ as $x^i_{(-\infty,0)}=y^i_{(-\infty,0)}$ and $x^i_0\ne y^i_0$ (we refer to Figures~\ref{fig:finite-asym} and~\ref{fig:finite-asym1}). 

%\begin{proof}[Proof of Claim~\ref{Claim:1}]\renewcommand{\qedsymbol}{\ensuremath{\triangleright}}
Note that $\BT$ is a primitive morphism and hence $X_{\BT}$ is minimal. Also, since $(x^1,y^1)\in\mathcal{AP}(X_{\BT})$, $X_{\BT}$ is not periodic.
Hence, for each $1\le i\le k$, $x^i$ and $y^i$ are not in the same orbit (because otherwise, both $x^i$ and $y^i$ are periodic). 

Now we see that for $1\le i\ne j\le k,\ x^i$ and $x^j$ are not in the same orbit. 
Let $x=x^i,\ y=x^j$.  
Without loss of generality, assume that $S^t(x)=y$ for some $t>0$. We can choose $n>1$ large enough and even so that $\alpha_{i,n}+|w_{i,n}|>t$.  Let $(k_n,x^{(n)}),(\ell_n,y^{(n)})$
denote the centered $\tau_{[0,n)}$- representation of $x$ and $y$ in $X_{\BT}^{(n)}$, respectively. Clearly, $x^{(n)}_0=v_{i+1,n}\ne v_{j+1,n}=y^{(n)}_0$, $0\le k_n<|\tau_{[0,n)}(v_{i+1,n})|$ and $x=S^{k_n}(\tau_{[0,n)}(x^{(n)}))$. This implies that $y=S^t(x)=S^{k_n+t}(\tau_{[0,n)}(x^{(n)}))$.
It is easy to see that $k_n=-\alpha_{i,n-1}$ and $|\tau_{[0,n)}(v_{i+1,n})|=|w_{i,n}|-\alpha_{i,n-1}+\alpha_{i,n}.$ Hence, $0\le k_n+t=|\tau_{[0,n)}(v_{i+1,n})|-|w_{i,n}|-\alpha_{i,n}+t<|\tau_{[0,n)}(v_{i+1,n})|$. This means that $(k_n+t,x^{(n)})$ is a centered $\tau_{[0,n)}$- representation of $y$ in $X_{\BT}^{(n)}$ which contradicts its uniqueness. 
% Let $x=x^i,\ y=y^i,\ \alpha_n=\alpha_{i,n},\ p_n=|\tau_{[0,n)}(v_{i,n})|$ and $q_n=|\tau_{[0,n)}(v_{i+1,n})|$ .  
% Without loss of generality, assume that $S^t(x)=y$ for some $t>0$, that is, $y_{j}=x_{j+t}$ for all $j\in\Z$. We can choose $n$ large enough so that $\alpha_{n+1}+|w_{i,n+1}|>t$. We can also choose $n$ to be an odd number. 
% Note that $x_{[\alpha_{n},\alpha_{n}+b_{n+1}-1]}=\tau_{[0,n+1)}(v_{i+1,n+1})$ and $y_{[\alpha_{n},\alpha_{n}+a_{n+1}-1]}=\tau_{[0,n+1)}(v_{i,n+1})$.
% By the recognizability of $X_{\BT}$, $x_{[\alpha_{n}+b_{n+1}-c_{n},\alpha_{n}+b_{n+1}-d_{n}-1]}=\tau_{[0,n)}(v_{m_{n},n})\tau_{[0,n)}(v_{1,n})$ and $x_{[\alpha_{n},\alpha_{n}+q_{n+1}-1]}$ does not contain $\tau_{[0,n)}(v_{m_{n},n})\tau_{[0,n)}(v_{1,n})$ as a subword. 
%By the recognizability of $X_{\BT}$, $x_{[\alpha_{n-1}+q_n,\infty)}$ has a prefix $\tau_{[0,n-1)}(v_{1,n-1})$,  $x_{[\alpha_{n-1},\alpha_{n-1}+q_n-1]}$ has a suffix $\tau_{[0,n-1)}(v_{m_{n-1},n-1})$ and $x_{[\alpha_{n-1},\alpha_{n-1}+q_n-1]}$ does not contain $\tau_{[0,n-1)}(v_{m_{n-1},n-1})\tau_{[0,n-1)}(v_{1,n-1})$ as a subword. 
%There exists a subsequence $(n_m)_{m\ge 1}$ of positive integers such that $m_{i,n_m}+|w_{i,n_m}|>|\tau_{[0,n_m-1)}(v_{k+1,n_m-1})|$. Hence, 
% We can choose $n$ large enough so that $m_{i,n}+|w_{i,n}|>t$. 
% Let $r\le 0$ be the cutting point of $x$ by $\tau_{[0,n)}$. 
% If $x^{(n)},y^{(n)}$
% denote the $\tau_{[0,n)}$- representation of $x$ and $y$ in $X_{\BT}^{(n)}$, 
% then we have $x^{(n)}_0=y^{(n)}_0$.
% But $\{x^{(n)}_0,y^{(n)}_0\}=\{v_{i,n},v_{i+1,n}\}$ which is a contradiction. 
A similar argument also shows that for $i\ne j$, $x^i$ and $y^j$ are not in the same orbit. Hence, the pairs $(x^1,y^1),\dots,(x^k,y^k)$ contribute to asymptotic components. 

% Choose $n$ such that $\left<\tau_{[0,n)}\right>>t$. 
% \textcolor{red}{Complete the proof}
%\end{proof}

\begin{claim}\label{Claim:1}
    Let $(x,y)$ be a representative of an asymptotic component of $X_{\BT}$. Then $x\sim_{\text{Orb}}x^i$ for some $1\le i\le k$. 
\end{claim}
\begin{proof}[Proof of Claim~\ref{Claim:1}]\renewcommand{\qedsymbol}{\ensuremath{\triangleright}}
%On the contrary, assume that there exist $k+1$ many distinct non-trivial asymptotic pairs given by $(x^0,y^0),\dots, (x^k,y^k)$. 
Without loss of generality, assume that $x_{(-\infty,0)}=y_{(-\infty,0)}$ and $x_0\ne y_0$. Using the recognizability of $X_{\BT}$, choose $x^{(n)}, y^{(n)}$, $0\le r(n)< |\tau_{[0,n)}(x^{(n)}_0)|$ and $0\le t(n)< |\tau_{[0,n)}(y^{(n)}_0)|$ such that $x=S^{r(n)}(\tau_{[0,n)}(x^{(n)}))$ and $y=S^{t(n)}(\tau_{[0,n)}(y^{(n)}))$. Clearly, $x^{(n)}_{-1}=v_{i,n}$ for some $1\le i\le k$. Moreover by Lemma~\ref{Lemma:splittingpoint}, $r(n)=t(n)$ and $x_{[-r(n),0)}=y_{[-r(n),0)}$ is the common prefix of $\tau_{[0,n)}(v_{i,n})$ and $\tau_{[0,n)}(v_{i+1,n})$. This implies that $x_{[-s(n),0)}=x^i_{[-s(n),0)}$ where $s(n)=r(n)+|\tau_{[0,n)}(v_{i,n})|$ (Figure~\ref{fig:factorization}).  Since $s(n)\to\infty$ as $n\to \infty$, we get $x$ and $x^i$ are asymptotic.   

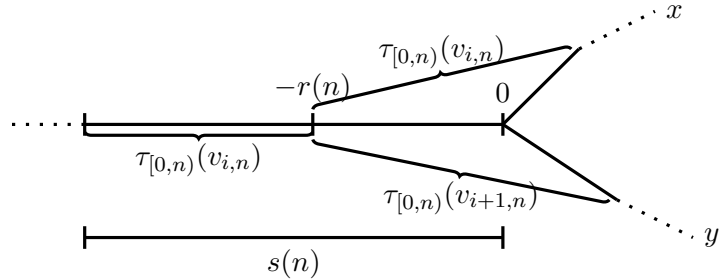
\begin{figure}[ht]
\begin{center}
\begin{tikzpicture}
\beginpgfgraphicnamed{situation-b}
\begin{scope}[very thick]
\draw[snake=brace] (3,0.2)-- node[above] {$\tau_{[0,n)}(v_{i,n})$}(6.4,1) ;
\draw[loosely dotted] (0,0) -- (-1,0);
\draw  (3,0) --  (5.5,0) ;
%\draw[snake=brace] (3,0.1)-- node[above] {$w$}(5.5,0.1) ;
\draw  (5.5,0)  -- (6.5,1);
\draw (0,0) --  (3,0);
\draw[snake=brace, mirror snake] (0,-0.1)-- node[below] {$\tau_{[0,n)}(v_{i,n})$}(3,-0.1) ;
\draw (3,-0.15) -- (3,0.15)  node[anchor=south] {$-r(n)$} ;
\draw (5.5,0) -- (7,-1);
\draw[loosely dotted] (7,-1)-- (8,-1.5)node[anchor=west] {$y$};
\draw[loosely dotted] (6.5,1)-- (7.5,1.5)
node[anchor=west] {$x$};
\draw (0,-0.15) -- (0,0.15);
\draw[snake=brace, mirror snake] (3,-0.2) -- node[below] {$\tau_{[0,n)}(v_{i+1,n})$} (6.9,-1) ;
\draw (5.5,-0.15) -- (5.5,0.15) node[anchor=south] {$0$} ;
\draw (0,-1.5)-- node[below] {$s(n)$} (5.5,-1.5);
\draw (0,-1.35)--(0,-1.65);
\draw (5.5,-1.35)--(5.5,-1.65);
\end{scope}
\endpgfgraphicnamed
\end{tikzpicture}
\caption{Factorization of $x$ and $y$ at level $n$}
\label{fig:factorization}
\end{center}
\end{figure}

\end{proof}

By Lemma~\ref{Lemma:twocomp}, no two pairs $(x^i,y^i)$ and $(x^j,y^j)$, $i\ne j$
can be in the same asymptotic component. Hence, these are at least $k$ distinct asymptotic components. By Claim~\ref{Claim:1}, the subshift $X_{\BT}$ has exactly $k$ many asymptotic components. 
\end{proof}
%\begin{remark}\label{rem:2}
 %   If $v_{i,n}\in \tilde{R}_{X_{\BT}^{(n)}}(1)$ for $1\le i\le k$, then there exist $a\ne b$ such that for all $t$, there exists a word $w$ of length $t$ where $wv_{i,n}a,wv_{i,n}b\in\mathcal{L}(X_{\BT}^{(n)})$. By the definition of the morphisms, if $t$ is large enough, then $w$ has a suffix given by $v_{m_n,n}v^2_{1,n}\dots v_{i-1,n}^2$ and $\{a,b\}=\{v_{i,n},v_{i+1,n}\}.$
%\end{remark}
%\begin{remark}\label{rem:2}
 % For $n\ge 1$, let $x\ne y\in X_{\BT}^{(n)}$ be asymptotic. Without loss of generality assume that $x_{(-\infty,-1]}=y_{(-\infty,-1]}$ and $x_0\ne y_0$. Then $x_{-1}=y_{-1}=v_{i,n}$ for some $1\le i\le k$. Then $x_{[-2i,-1]}=y_{[-2i,-1]}=v_{m_n,n}v_{1,n}^2\dots v_{i-1,n}^2 v_{i,n}$ and $\{x_0,y_0\}=\{v_{i,n},v_{i+1,n}\}$.
%\end{remark}

\begin{corollary}
     Let $\BT=(\tau_n)_{n\ge 0}$ be a directive sequence of morphisms that satisfies Property ($P_k$) and let $X_{\BT}$ be the associated $\mathcal{S}$-adic subshift. Then $(X_{\BT},S)$ has zero entropy. 
\end{corollary}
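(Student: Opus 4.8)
The plan is to deduce this immediately from the two results already established, namely \cref{lemma:kasym} and \cref{thm:main3}. First I would invoke \cref{lemma:kasym}: since $\BT$ satisfies Property ($P_k$), the $\mathcal{S}$-adic subshift $X_{\BT}$ has exactly $k$ asymptotic components. In particular, the number of asymptotic components is finite, hence certainly at most countable.

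With that observation in hand, the hypothesis of \cref{thm:main3} is satisfied by the topological dynamical system $(X_{\BT},S)$: it is a topological dynamical system (a subshift is compact metric and $S$ is a homeomorphism on it) possessing at most countably many asymptotic components. Applying \cref{thm:main3} directly yields that $(X_{\BT},S)$ has topological entropy zero, which is exactly the claim.

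There is essentially no obstacle here, as this corollary is a formal combination of the preceding theorems; the only point worth making explicit is that ``exactly $k$'' falls under the ``at most countably many'' umbrella required by \cref{thm:main3}. I would not attempt a direct complexity estimate on $\mathcal{L}_n(X_{\BT})$, since that would duplicate work already subsumed by \cref{thm:main3}; the clean two-line argument is preferable and is the route I would take.
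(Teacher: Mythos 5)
Your proposal is correct and matches the paper's proof exactly: the paper likewise deduces finitely many asymptotic components from Property $(P_k)$ (citing the relevant theorem, though it mistakenly references Proposition~\ref{prop:kasymToep} rather than Theorem~\ref{lemma:kasym}) and then applies Theorem~\ref{thm:main3}. No differences worth noting.
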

\begin{proof}
    By Proposition~\ref{prop:kasymToep}, the $\mathcal{S}$-adic subshift $X_{\BT}$ has only finitely many asymptotic components. Hence, the result is followed by Theorem~\ref{thm:main3}.
\end{proof}

In order to prove our main theorem, it is enough to construct an $\mathcal{S}$-adic subshift associated with a directive sequence of morphisms that satisfies Property ($P_k$) within a given strong orbit equivalence class. The following lemma allows us to construct such a subshift~\cite[Section 4.1]{CD20}.
\begin{lemma}\label{lemma:A>V}
      Let $B=(V,E)$ be a simple Bratteli diagram. There exists a simple Bratteli diagram $B'=(V',E')$ with a sequence of incidence matrices $(A_n')_{n\in\N}$ such that for all $n\geq 1$, $|V'_n|>n$,
      all the entries of $A'_n$ is at least $|V_{n+1}'|$ and $(X_B,T_B)$ and $(X_{B'},T_{B'})$ are strong orbit equivalent. 
\end{lemma}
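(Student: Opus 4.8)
The plan is to construct $B'$ from $B$ by telescoping, so that strong orbit equivalence is automatic via \cref{prop:soe}. Recall that telescoping a Bratteli diagram replaces a consecutive block of levels by a single level whose incidence matrix is the product of the intermediate incidence matrices; crucially, telescoping produces a Bratteli diagram on the \emph{same} underlying diagram (up to the unordered common-intertwining relation), so the telescoped diagram $B'$ and the original $B$ are strong orbit equivalent by \cref{prop:soe}. So the entire content of the lemma reduces to a combinatorial estimate: a telescoping of a simple diagram can be chosen with incidence-matrix entries as large as we please and with the vertex set at level $n$ of size exceeding $n$.

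First I would handle the positivity and size-of-entries requirement. Since $B$ is simple, by definition there is a telescoping of $B$ whose incidence matrices $A_n$ are all strictly positive; replace $B$ by this telescoping from the start, so we may assume every $A_n$ is a positive integer matrix. Now observe that telescoping across a block $[n,N)$ multiplies the intermediate matrices: the new incidence matrix is $A_{N-1}\cdots A_n$, a product of positive matrices, whose entries grow without bound as $N\to\infty$ (each entry of a product of positive integer matrices is at least the corresponding entry of any single factor, and in fact grows at least geometrically because all factors have entries $\ge 1$ and the matrices are positive). Thus, by choosing the telescoping blocks $[n_k,n_{k+1})$ long enough at each stage, I can force every entry of $A'_k = A_{n_{k+1}-1}\cdots A_{n_k}$ to exceed any prescribed threshold — in particular to be at least $|V'_{k+1}|$. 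The subtlety here is that the threshold $|V'_{k+1}|$ depends on the \emph{next} level's vertex count, so the telescoping lengths must be chosen in the right order.

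This forces me to keep the vertex set from collapsing, which addresses the condition $|V'_n|>n$. Telescoping does not change the vertex set at the levels that survive, i.e.\ $V'_k = V_{n_k}$, so $|V'_k| = |V_{n_k}|$. If the original diagram already has unboundedly many vertices along some subsequence, I simply select the $n_k$ along that subsequence so that $|V_{n_k}|>k$. If instead the number of vertices per level in $B$ is bounded, a small preliminary modification is needed: I would split vertices (or pre-compose with an auxiliary telescoping that introduces additional vertices without leaving the strong orbit equivalence class) to guarantee the vertex counts tend to infinity. The cleanest route is to observe that one may always replace $B$ by a strong-orbit-equivalent diagram whose level sizes are strictly increasing; this is a standard manipulation of simple Bratteli diagrams and preserves the unordered diagram up to common intertwining.

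I would then assemble the construction inductively. Having fixed a positive-matrix diagram with increasing vertex counts, I define the telescoping levels $n_0=0 < n_1 < n_2 < \cdots$ recursively: given $n_k$, I first ensure $|V_{n_{k+1}}| > k+1$ (possible since vertex counts are increasing/unbounded), and then, knowing this target vertex count, I enlarge $n_{k+1}$ further if necessary so that every entry of the product $A_{n_{k+1}-1}\cdots A_{n_k}$ is at least $|V_{n_{k+1}}|$. Setting $B'$ to be this telescoping, \cref{prop:soe} yields the strong orbit equivalence, and by construction all three required properties hold. The main obstacle, and the only place genuine care is required, is the simultaneous control of the two conditions: the entry bound references $|V'_{n+1}|$, so the inductive order — fix the next vertex count \emph{first}, then lengthen the block to beat it — is essential, and one must verify that lengthening the block to satisfy the entry bound does not retroactively spoil the vertex-count condition (it does not, since the surviving vertex set $V_{n_{k+1}}$ is fixed once $n_{k+1}$ is chosen, and further lengthening only moves $n_{k+2}$).
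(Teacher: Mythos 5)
The paper does not actually prove this lemma; it is quoted from \cite[Section 4.1]{CD20}, so the comparison below is with the standard construction behind that reference rather than with an in-text argument.

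Your reduction to ``telescope until the entries are large'' has a genuine gap, and it is precisely at the point you dismiss as ``a standard manipulation.'' Telescoping never changes the vertex set at a surviving level ($V'_k=V_{n_k}$), so it can neither create vertices nor destroy them; the operation you invoke to fix the bounded-vertex case, ``pre-compose with an auxiliary telescoping that introduces additional vertices,'' does not exist. What is actually needed is the \emph{inverse} of telescoping: inserting new levels by factoring each incidence matrix as a product $A_n=C_nD_n$ of non-negative integer matrices with an intermediate vertex set of the desired size, and then verifying strong orbit equivalence through the common-intertwining criterion of \cref{prop:soe}. That insertion/factoring step is the real content of the lemma, which is why the paper defers to \cite{CD20}.

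Moreover, even where you do apply telescoping, the claim that lengthening the blocks eventually beats the threshold $|V'_{k+1}|$ is false in general, because the threshold moves with the block: take $|V_n|=2^{2^n}$ with all incidence matrices equal to the all-ones matrix. This is a simple diagram with unbounded (so, on your reading, already acceptable) vertex counts, yet every telescoped matrix $A'_k=A_{n_{k+1}-1}\cdots A_{n_k}$ has all entries equal to $\prod_{j=n_k+1}^{n_{k+1}-1}|V_j|=2^{2^{n_{k+1}}-2^{n_k+1}}<2^{2^{n_{k+1}}}=|V'_{k+1}|$, no matter how the $n_k$ are chosen. So telescoping alone cannot deliver the entry bound; one must also be able to \emph{shrink} (or otherwise prescribe) the vertex counts, which again forces the level-insertion construction. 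In short, the inductive bookkeeping in your last paragraph is fine, but it is built on an operation you have not supplied and on a growth claim that fails without it.
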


\begin{proposition}\label{prop:conjugacy2} 
Let $B=(V,E,\le)$ be a simple ordered Bratteli diagram. Then, there exists a simple ordered Bratteli diagram $B'=(V',E',\le')$ such that the Bratteli-Vershik systems $(X_B,V_B)$ and $(X_{B'},V_{B'})$ are strong orbit equivalent and the directive sequence of morphisms read on $B'$,
 given by $\BT=(\tau_n)_{n\ge 0}$, satisfies Property ($P_k$).
   Moreover,
   the $\mathcal{S}$-adic subshift $(X_{\BT},S)$ is conjugate to $(X_{B'},V_{B'})$.  
\end{proposition}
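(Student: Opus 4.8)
The plan is to produce $B'$ in two stages: first, via strong orbit equivalence, pass to an \emph{unordered} simple diagram whose incidence matrices have very large entries; then choose an \emph{order} on that diagram forcing the readoff morphisms into the exact shape demanded by Property~($P_k$), and finally invoke Proposition~\ref{prop:conjugacy}.

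First I would apply Lemma~\ref{lemma:A>V} to the underlying unordered diagram $(V,E)$ of $B$, obtaining a simple unordered diagram $(V',E')$ with incidence matrices $(A'_n)_{n\ge 1}$ satisfying $|V'_n|>n$ and $(A'_n)_{i,j}\ge |V'_{n+1}|$ for all $i,j$. By Proposition~\ref{prop:soe} strong orbit equivalence depends only on the unordered diagram, so the Bratteli--Vershik system of $(V',E')$ under \emph{any} order is strong orbit equivalent to $(X_B,V_B)$. After telescoping the first finitely many levels I may moreover assume $m_n:=|V'_n|>k+1$ for every $n\ge 1$; telescoping induces a conjugacy and only enlarges the entries, so both the bounds and the strong orbit equivalence persist. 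Writing $V_n=\{v_{1,n},\dots,v_{m_n,n}\}$, these entry bounds are precisely the numerical requirements (i)--(iv) recorded after Definition~\ref{def:finite-asym}.

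Next I would build the order $\le'$ by prescribing, for each $n\ge 1$ and each $u=v_{i,n+1}$, the word $\tau_n(u)$ read off the edges into $u$; any arrangement of the multiset of sources ($v_{j,n}$ occurring $(A'_n)_{i,j}$ times) is realized by some edge order, so I am free to set
\[
\tau_n(v_{i,n+1})=
\begin{cases}
v_{1,n}^2\cdots v_{i-1,n}^2\,v_{i,n}\cdots v_{k+1,n}\,R, & 1\le i\le k+1,\\
v_{1,n}^{\,i}\,v_{2,n}\,R, & i>k+1,
\end{cases}
\]
where $R$ lists the remaining sources in nondecreasing index order $v_{1,n}^{\,b_1}\cdots v_{m_n,n}^{\,b_{m_n}}$. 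The entry bounds force every $b_j\ge 0$ and $b_{m_n}\ge 1$, and this makes conditions (1)--(3) immediate. For condition (4) one checks that the prescribed prefixes involve only $v_{1,n},\dots,v_{k+1,n}$ (hence no $v_{m_n,n}$, as $m_n>k+1$), that the increasing tail $R$ ends in $v_{m_n,n}$ and contains no factor $v_{m_n,n}v_{1,n}$, and that the single prefix-to-tail junction creates no such factor either. Finally $\tau_0$, read on the edges out of $V_0$, sends distinct vertices to words over disjoint edge-alphabets and so is automatically a hat morphism.

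To conclude I would verify the hypotheses of Proposition~\ref{prop:conjugacy}. Each $\tau_n$ is primitive by (1), so $\BT$ is primitive and $X_{\BT}$ is minimal. Each $\tau_n$ begins with $v_{1,n}$ and ends with $v_{m_n,n}$, hence is proper; this left/right properness also forces a unique minimal and a unique maximal infinite path, so $B'=(V',E',\le')$ is properly ordered and $V_{B'}$ is well defined. The cutting-point description (a cut occurs exactly where $v_{m_n,n}$ precedes $v_{1,n}$) together with injectivity of $\tau_n$ on symbols yields recognizability of $X_{\BT}$, which is exactly the injectivity of the concatenation map $X_{\BT}^{(n+1)}\to X_{\BT}^{(n)}$ that is required. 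Proposition~\ref{prop:conjugacy} then gives $(X_{\BT},S)$ conjugate to $(X_{B'},V_{B'})$, and by the first stage this system is strong orbit equivalent to $(X_B,V_B)$. The main obstacle is precisely the word-engineering of the third paragraph: realizing the prefixes of (2)--(3) together with the suffix and the ``no $v_{m_n,n}v_{1,n}$'' constraint of (4) while keeping the order proper is the delicate point, and it is what forces both the large-entry normalization and the bound $m_n>k+1$.
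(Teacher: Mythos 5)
Your proposal is correct and follows essentially the same route as the paper's proof: normalize the unordered diagram via Lemma~\ref{lemma:A>V} and telescoping, choose an order realizing the Property~($P_k$) word shapes, and verify the hypotheses of Proposition~\ref{prop:conjugacy}, with the injectivity of the concatenation map coming from the same cutting-point argument (the factor $v_{m_n,n}v_{1,n}$ occurs only at cutting points, plus injectivity on symbols). You merely spell out the word-engineering of the ordering that the paper compresses into ``we can easily define an ordering $\le'$''.
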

\begin{proof}
    By Lemma~\ref{lemma:A>V}, and telescoping if necessary, we assume that for all $n\ge 1$, $|V'_n|>k$, and $(A_n')_{i,j}\ge |V'_{n+1}|$.
    We let $V'_n=\{v_{1,n},\dots,v_{m_n,n}\}$. We can easily define an ordering $\le'$ on $(V',E')$ such that the directive sequence of morphism $\BT=(\tau_n)_{n\ge 0}$ read on $B'=(V',E',\le')$ satisfies Property ($P_k$).

    To show that $(X_{\BT},S)$ is conjugate to $(X_{B'},V_{B'})$,
    by Proposition~\ref{prop:conjugacy}, it is enough to prove that $\tau_n$ is proper for $n\ge 1$ and that $\tau_n$ extends by concatenation to an injective map from $X_{\BT}^{(n+1)}$ to $X_{\BT}^{(n)}$. Clearly, $\tau_n$ is proper. 
    
 We fix two distinct sequences $x=(x_i)_{i\in\Z},x'=(x'_i)_{i\in\Z}\in X_{\BT}^{(n+1)}$
   such that $\tau_n(x)=\tau_n(x')$. 
    % Since $v_{m_n,n}^{\ell_{i,n}}$ is a suffix of $\tau_n(v_{i,n+1})$, the symbol $v_{m_n,n}$ does not appear anywhere else in the middle. 
     The cutting points of $\tau_n(x)$ are wherever $v_{m_n,n}$ is followed by $v_{1,n}$. Since the word $v_{m_n,n}v_{1,n}$ does not appear anywhere else except at the cutting points, the cutting points of $\tau_n(x)$ and $\tau_n(x')$ are the same. Hence $\tau_n(x_i)=\tau_n(x'_i)$ for all $i\in\Z$. Since $\tau_n$ is injective on symbols, we have $x_i=x_i'$ for all $i\in\Z$.  
\end{proof}

%We now have all the necessary components to prove  Theorem~\ref{thm:main1}. 
\begin{theorem}\label{thm:kmany}
    For any given $k\ge 1$ and for any given minimal Cantor system $(X,T)$, there exists a subshift which has exactly $k$ many asymptotic components and that is strong orbit equivalent to $(X,T)$. 
\end{theorem}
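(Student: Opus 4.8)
The plan is to assemble Theorem~\ref{thm:kmany} directly from the machinery already developed, since all the hard analytic and combinatorial work has been front-loaded into the preceding propositions. The statement asserts, for a fixed $k\ge 1$ and an arbitrary minimal Cantor system $(X,T)$, the existence of a subshift with exactly $k$ asymptotic components that is strong orbit equivalent to $(X,T)$. My strategy is a chain of reductions: first pass from $(X,T)$ to a Bratteli--Vershik representation, then upgrade that representation so that the morphisms read on it satisfy Property ($P_k$), and finally invoke the counting theorem for the number of asymptotic components.

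Concretely, I would first apply \cref{thm:minimalCantor} (Herman--Putnam--Skau) to obtain a properly ordered Bratteli diagram $B=(V,E,\le)$ with $(X_B,T_B)$ topologically conjugate to $(X,T)$; since $(X,T)$ is a minimal Cantor system, $B$ is simple. Next I would feed $B$ into \cref{prop:conjugacy2}, which produces a simple ordered Bratteli diagram $B'=(V',E',\le')$ such that $(X_B,T_B)$ and $(X_{B'},T_{B'})$ are strong orbit equivalent, the directive sequence $\BT=(\tau_n)_{n\ge 0}$ read on $B'$ satisfies Property ($P_k$), and the associated $\mathcal{S}$-adic subshift $(X_{\BT},S)$ is conjugate to $(X_{B'},T_{B'})$. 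At this point I would apply \cref{lemma:kasym}: because $\BT$ satisfies Property ($P_k$), the subshift $X_{\BT}$ has exactly $k$ asymptotic components.

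It remains only to track the equivalence relations through the chain. Strong orbit equivalence is an equivalence relation, and topological conjugacy implies strong orbit equivalence (the conjugacy map serves as the orbit-equivalence homeomorphism with continuous cocycles). Thus $(X_{\BT},S)$ is conjugate to $(X_{B'},T_{B'})$, which is strong orbit equivalent to $(X_B,T_B)$, which is conjugate to $(X,T)$; composing, $(X_{\BT},S)$ is strong orbit equivalent to $(X,T)$. Taking $(X',S):=(X_{\BT},S)$ gives the desired subshift with exactly $k$ asymptotic components, completing the proof.

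I do not expect any genuine obstacle here, since the real content lives in \cref{prop:conjugacy2} (building the Property-($P_k$) diagram within the given strong orbit equivalence class, relying on \cref{lemma:A>V}) and in \cref{lemma:kasym} (the delicate recognizability and bifurcation analysis of \cref{lemma:prefix_indep} through \cref{Lemma:twocomp}). The only point requiring a word of care in the write-up is verifying that the resulting $X_{\BT}$ is a genuine subshift and that the three-way equivalence chain composes correctly; both are routine given the cited results.
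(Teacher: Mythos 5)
Your proposal is correct and follows exactly the same route as the paper's proof: invoke Theorem~\ref{thm:minimalCantor} for a Bratteli--Vershik representation, apply Proposition~\ref{prop:conjugacy2} to obtain a diagram whose read morphisms satisfy Property ($P_k$) together with the conjugacy to the $\mathcal{S}$-adic subshift, and conclude with Theorem~\ref{lemma:kasym}. No discrepancies to report.
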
 
\begin{proof}
    Let $B=(V,E,\le)$ be the simple ordered Bratteli diagram with respect to which the associated Bratteli-Vershik system is conjugate to $(X,T)$ (Theorem~\ref{thm:minimalCantor}).

 By Proposition~\ref{prop:conjugacy2}, construct an ordered Bratteli diagram $B'=(V,E',\le')$ such that $(X_B,V_B)$ is strong orbit equivalent to $(X_{B'},V_{B'})$ and the directive sequence of morphisms read on $B'$, given by $\BT$, satisfies Property ($P_k$).  By Lemma~\ref{lemma:kasym}, the subshift $(X_{\BT},S)$ has exactly $k$ many asymptotic components. Again, using Proposition~\ref{prop:conjugacy2}, $(X_{B'},V_{B'})$ is conjugate to $(X_{\BT},S)$ and hence, $(X,T)$ and $(X_{\BT},S)$ are strong orbit equivalent. 
\end{proof}
Following directly from Proposition~\ref{prop:aut} on the $\mathcal{S}$-adic subshift that satisfies Property $(P_1)$, we have Corollary~\ref{cor:aut}.

\subsection{Invariant measures and automorphism groups}
For a topological dynamical system $(X,T)$, a Borel probability measure $(X,\CB,\mu)$ is said to be \emph{invariant} if for any Borel subset $A\in\CB$, we have $\mu(T^{-1}(A))=\mu(A)$. It is well known that the set of invariant probability measures, denoted as $\CM(X,T)$, endowed with the $\text{weak}^*$ topology, is a non-empty \emph{Choquet simplex}. Recall that a compact, convex, metrizable subset $K$ of a locally convex real vector space is said to be a Choquet simplex if for every $v\in K$, there exists a unique probability measure $\mu$ on the set of extreme points of $K$, denoted by ${\rm ext}(K)$, such that $\int_{{\rm ext}(K)}xd\mu(x)=v.$
We now discuss the possible invariant measures of systems with trivial automorphism groups. 
\begin{corollary}\label{cor:invmeas}
    Let $K$ be any Choquet simplex. Then there exists a subshift $(Y,S)$ with a trivial automorphism group such that its set of invariant measures, $\CM(Y,S)$ is affine homeomorphic to $K$.
\end{corollary}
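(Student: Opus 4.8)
The plan is to obtain this corollary as a direct assembly of three ingredients: a realization theorem for Choquet simplices, the triviality result already established in \cref{cor:aut}, and the invariance of the measure simplex under strong orbit equivalence. No new combinatorial construction is needed; all the substantive work was already carried out in proving \cref{cor:aut}.

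First I would invoke Downarowicz's realization theorem, which asserts that every metrizable Choquet simplex is affinely homeomorphic to the simplex $\CM(X,T)$ of invariant probability measures of some minimal Cantor system $(X,T)$. Since the notion of Choquet simplex used in the paper already builds in metrizability, the hypothesis is satisfied for the given $K$, and I may fix a minimal Cantor system $(X,T)$ together with an affine homeomorphism $\CM(X,T)\to K$.

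Next I would apply \cref{cor:aut} to this system $(X,T)$: there exists a minimal subshift $(Y,S)$ that is strong orbit equivalent to $(X,T)$ and whose automorphism group is generated by $S$, that is, trivial. (Recall this is exactly the $k=1$ case of \cref{thm:kmany} combined with \cref{prop:aut}, applied to the single asymptotic component produced by a directive sequence satisfying Property $(P_1)$.) Finally, I would use the fact, going back to \cite{HPS92}, that strong orbit equivalence preserves the set of invariant measures: the orbit equivalence homeomorphism $\phi\colon X\to Y$ sends $T$-invariant measures to $S$-invariant measures via the pushforward $\mu\mapsto\phi_*\mu$, and this map is an affine homeomorphism $\CM(X,T)\to\CM(Y,S)$. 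Composing it with the affine homeomorphism from the realization step yields that $\CM(Y,S)$ is affine homeomorphic to $K$, as required.

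The main point to verify—rather than a genuine obstacle—is that all three inputs refer to the same class of objects: Downarowicz's system is a \emph{minimal} Cantor system, \cref{prop:aut} requires a \emph{minimal} subshift (which the construction produces), and the measure-preservation statement is valid for strong orbit equivalence of minimal Cantor systems; metrizability of $K$ is guaranteed by the definition in use. With these compatibilities noted, the corollary follows immediately, the only nontrivial content being \cref{cor:aut} itself.
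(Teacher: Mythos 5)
Your proposal is correct and follows essentially the same route as the paper: invoke Downarowicz's realization theorem to get a system whose measure simplex is $K$, pass to a strong orbit equivalent subshift with trivial automorphism group via \cref{cor:aut}, and use the fact that strong orbit equivalence preserves the simplex of invariant measures up to affine homeomorphism. The extra care you take about pushing forward measures along the orbit equivalence and about metrizability is harmless but not needed beyond what the paper records.
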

\begin{proof}
    By~\cite[Theorem 5]{Downa91}, there exists a subshift $(X,S)$ such that $\CM(X,S)$ is affine homeomorphic to $K$. Let $(X',S)$ be the subshift that is strong orbit equivalent to $(X,S)$ with $\text{Aut}(X',S)$ generated by $S$. Since the strong orbit equivalence preserves the set of invariant measures, up to the affine homeomorphism, we can conclude that $\CM(X',S)$ is affine homeomorphic to $K$. 
\end{proof}

For a topological dynamical system $(X,T)$, a Borel probability measure $(X,\CB,\mu)$ is said to be a \emph{characteristic measure} if for any Borel subset $A\in\CB$, we have $\mu(f^{-1}(A))=\mu(A)$ for all $f\in\text{Aut}(X,T)$. The existence of a characteristic measure, in general, is still unknown. In the case of subshifts, by Krylov-Bogolioubov theorem, the characteristic measures exist if the automorphism group is amenable. Also, characteristic measures exist for subshifts with zero entropy~\cite{FT22}. In any case, we prove an analogue of~\cite[Theorem 5]{Downa91} in the case of characteristic measures. 

\begin{corollary}
     Let $K$ be any Choquet simplex. Then there exists a subshift $(Y,S)$ such that its set of characteristic measures is affine homeomorphic to $K$.
\end{corollary}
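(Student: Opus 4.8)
The plan is to reduce this statement to the previously established \cref{cor:invmeas} by observing that for a subshift whose automorphism group is generated by the shift, the set of characteristic measures coincides exactly with the set of invariant measures. Once this coincidence is verified, the corollary follows immediately from the fact that \cref{cor:invmeas} already produces, for any Choquet simplex $K$, a subshift with trivial automorphism group whose invariant measures realize $K$.

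First I would verify the coincidence. Let $(Y,S)$ be a subshift with $\text{Aut}(Y,S)$ generated by $S$. Since $S\in\text{Aut}(Y,S)$, any characteristic measure $\mu$ satisfies the defining equation $\mu(f^{-1}(A))=\mu(A)$ with $f=S$, which is precisely $S$-invariance; thus every characteristic measure lies in $\CM(Y,S)$. Conversely, suppose $\mu\in\CM(Y,S)$. Every $f\in\text{Aut}(Y,S)$ is of the form $f=S^n$ for some $n\in\Z$, and $S$-invariance gives $\mu(S^{-n}(A))=\mu(A)$ for all $n\in\Z$ and all Borel sets $A$. Hence $\mu(f^{-1}(A))=\mu(A)$ for every automorphism $f$, so $\mu$ is characteristic. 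This shows that for such a system the characteristic measures and the invariant measures form the same set.

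Having established this, I would invoke \cref{cor:invmeas}: for the given Choquet simplex $K$ there exists a subshift $(Y,S)$ with $\text{Aut}(Y,S)$ generated by $S$ and with $\CM(Y,S)$ affine homeomorphic to $K$. By the coincidence just proved, the set of characteristic measures of this $(Y,S)$ equals $\CM(Y,S)$, and is therefore affine homeomorphic to $K$, completing the proof.

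There is no substantial obstacle in this argument; all the genuine content is already packaged into \cref{cor:invmeas} (which in turn rests on the construction of a subshift with trivial automorphism group inside a prescribed strong orbit equivalence class, together with Downarowicz's realization theorem). The only point requiring care is the elementary but essential observation that triviality of the automorphism group collapses the notion of a characteristic measure down to that of an invariant measure, so that the realization of arbitrary Choquet simplices as invariant-measure simplices transfers verbatim to the characteristic setting.
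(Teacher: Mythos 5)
Your proposal is correct and follows exactly the paper's own argument: invoke Corollary~\ref{cor:invmeas} to get a subshift with trivial automorphism group realizing $K$ as its simplex of invariant measures, and then note that triviality of the automorphism group makes the characteristic measures coincide with the invariant measures. The only difference is that you spell out this last coincidence explicitly, which the paper leaves as a one-line remark.
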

\begin{proof}
    By Corollary~\ref{cor:invmeas}, there exists a subshift $(Y,S)$ with a trivial automorphism group such that its set of invariant measures, $\CM(Y,S)$ is affine homeomorphic to $K$. The result holds for this subshift, since the set of all characteristic measures is given by $\CM(Y,S)$. 
\end{proof}

\subsection{Complexity functions and automorphism groups}
In this section, within a strong orbit equivalence class of a minimal Cantor system, we construct subshifts that have arbitrarily small superlinear complexity and trivial automorphism group. We also show that \cref{thm:main3} is optimal. Specifically, we construct subshifts with arbitrarily large subexponential complexity that have a single asymptotic component (and consequently, a trivial automorphism group).
\begin{corollary}\label{thm:complexity}
    Let $(X,T)$ be a minimal Cantor system, and $(p_n)_{n\ge 1}$ be a sequence of positive real numbers such that $\lim_{n\to\infty}\frac{n}{p_n}=0$. Then there exists a subshift $(Y,S)$ satisfying the following;
    \begin{enumerate}
        \item $(X,T)$ and $(Y,S)$ are strong orbit equivalent,
         \item The complexity of $(Y,S)$ satisfies $\lim_{n\to\infty} \frac{p_Y(n)}{p_n}=0,$ and
        \item $\text{Aut}(Y,S)$ is generated by $S$.
    \end{enumerate}
\end{corollary}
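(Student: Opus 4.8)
The plan is to specialize the construction behind Theorem~\ref{thm:kmany} to the case $k=1$ and, inside it, to spend the freedom left by Lemma~\ref{lemma:A>V} and Proposition~\ref{prop:conjugacy2} on pushing the complexity below $(p_n)$. Starting from a simple ordered Bratteli diagram $B$ whose Bratteli--Vershik system is conjugate to $(X,T)$ (Theorem~\ref{thm:minimalCantor}), I would apply Lemma~\ref{lemma:A>V} to pass to a strong orbit equivalent diagram $B'=(V',E')$ with $|V'_n|>n$; crucially, I keep the alphabet sizes $|V'_n|$ growing slowly (say $|V'_n|=n+1$) while telescoping as aggressively as I like, so that the path lengths $\ell_n:=\min_{v\in V'_n}|\tau_{[0,n)}(v)|$ become enormous and the row sums at each level stay comparable to one another (balanced incidence matrices). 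I then fix an ordering $\le'$ turning the directive sequence $\BT=(\tau_n)_{n\ge 0}$ read on $B'$ into one satisfying Property $(P_1)$, exactly as in Proposition~\ref{prop:conjugacy2}; since that property only bounds the incidence entries from below and only prescribes a bounded prefix $v_{1,n}^{i}v_{2,n}$ and the single suffix letter $v_{m_n,n}$, it leaves the bulk of each $\tau_n(v)$ free to be inflated and balanced as needed.

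For the complexity I would invoke the block-counting bound for proper recognizable $\mathcal{S}$-adic subshifts used in \cite{CD20}: because every $\tau_n$ is proper, any word of length $n$ with $\ell_k\le n<\ell_{k+1}$ desubstitutes to a level-$k$ word of length $O(n/\ell_k)$ together with an offset bounded by $\max_{v}|\tau_{[0,k)}(v)|$, so $p_{X_{\BT}}(n)\le c_k\,n$ on that plateau, where the slope $c_k$ is controlled by $|V'_{k+1}|$ and by the (bounded) ratios $\max_v|\tau_{[0,j)}(v)|/\ell_j$ for $j\le k$. The point I would stress is that this estimate depends only on the \emph{unordered} data, namely the alphabet sizes and the incidence matrices, and hence is unaffected by the choice of $\le'$ that produces Property $(P_1)$. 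Since $(p_n)$ is superlinear, $\delta_k:=\sup_{n\ge \ell_k} n/p_n\to 0$; on each plateau $p_{X_{\BT}}(n)/p_n\le c_k\delta_k$, and because $c_k$ is determined by the slowly growing alphabet while $\ell_k$ can be made as large as we please by telescoping further -- forcing $\delta_k$ as small as we please -- we arrange $c_k\delta_k\to 0$ inductively, giving $\lim_{n\to\infty}p_{X_{\BT}}(n)/p_n=0$.

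It then remains only to collect the three conclusions for $(Y,S):=(X_{\BT},S)$. By \cref{lemma:kasym} with $k=1$ the subshift has exactly one asymptotic component, so Proposition~\ref{prop:aut} gives that $\mathrm{Aut}(X_{\BT},S)$ is generated by $S$, which is (3); Proposition~\ref{prop:conjugacy2} gives a conjugacy from $X_{\BT}$ to $(X_{B'},V_{B'})$, which is strong orbit equivalent to $(X,T)$, which is (1); and the plateau estimate is (2). The step I expect to be the main obstacle is the compatibility check: one must verify that the rigid prefix/suffix prescriptions of Property $(P_1)$ never collide with the demand that the images $\tau_n(v)$ be extremely long and balanced, and that the complexity bound really is governed solely by the ordering-independent incidence data. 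Both hold because $(P_1)$ constrains only a bounded initial and terminal segment of each image and bounds entries from below, but making this simultaneous control explicit -- large, balanced row sums together with the forced markers and slowly growing alphabets -- is the technical heart of the argument.
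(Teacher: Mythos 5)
Your strategy is essentially the paper's, just run in the opposite direction: the paper's entire proof is the one-line observation that the directive sequence already constructed in \cite[Theorem 1.2]{CD20} to achieve conclusions (1) and (2) happens to satisfy Property $(P_1)$, after which Theorem~\ref{lemma:kasym} and Proposition~\ref{prop:aut} give (3); you instead start from the Property-$(P_1)$ construction of Proposition~\ref{prop:conjugacy2} and re-establish the complexity bound of \cite{CD20} on top of it. The compatibility point you isolate as the technical heart --- that $(P_1)$ constrains only bounded prefix/suffix segments and lower bounds on incidence entries, while the complexity is governed by the ordering-independent data (slowly growing alphabets and aggressive telescoping) --- is exactly the content of the paper's remark, so the two arguments prove the same thing. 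The one place where your write-up is softer than a citation would be is the plateau estimate itself: as stated, a bound $p_{X_{\BT}}(n)\le c_k n$ holding uniformly for all $\ell_k\le n<\ell_{k+1}$ with $c_k$ depending only on $|V'_{k+1}|$ and the balancedness ratios does not fall out of the naive desubstitution count (covering a window by level-$k$ blocks gives a factor $p_{X^{(k)}}(O(n/\ell_k))$, which is not controlled across a long plateau, while covering by two level-$(k+1)$ blocks gives a bound proportional to $\ell_{k+1}$, which is poor at the left end of the plateau); the correct version requires the finer right-special-factor counting carried out in \cite{CD20}. Since you explicitly defer to that reference for the bound, this is a presentational rather than a substantive gap, but if you re-derive the estimate rather than cite it you would need to supply that argument.
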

\begin{proof}
    Note that the directive sequence of morphisms that is constructed to satisfy (1) and (2) in~\cite[Theorem 1.2]{CD20} satisfies Property $(P_1)$. 
\end{proof}

In the following result, we show the existence of subshifts with sub-exponential complexity having trivial automorphism group. 

\begin{theorem} \label{thm:subexponential}
    Let $(g_n)_{n\ge 1}$ be a sequence of positive real numbers such that $\lim_{n\to\infty}\frac{\ln(g_n)}{n}=0$. Then there exists a subshift $(X,S)$ having exactly one asymptotic component (and thus having the trivial automorphism group) such that $\liminf
    _{n\to\infty}\frac{g_n}{p_X(n)}=0.$
\end{theorem}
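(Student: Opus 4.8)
The plan is to realise $(X,S)$ as an $\mathcal{S}$-adic subshift $X_{\BT}$ whose directive sequence satisfies Property $(P_1)$. By Theorem~\ref{lemma:kasym} (with $k=1$) this forces a single asymptotic component, whence $\text{Aut}(X_{\BT},S)=\langle S\rangle$ by Proposition~\ref{prop:aut} and $h(X_{\BT})=0$ by Theorem~\ref{thm:main3}; in particular the complexity is automatically subexponential. All the work is therefore in arranging, at a diverging sequence of length scales, enough distinct words to overtake $g$ while remaining inside the rigid skeleton of $(P_1)$. The guiding observation is that $(P_1)$ only constrains the prefixes $v_{1,n}^{i}v_{2,n}$, the suffix $v_{m_n,n}$, and the absence of the single factor $v_{m_n,n}v_{1,n}$ in the \emph{interior} of each $\tau_n(v_{i,n+1})$; the interior is otherwise free. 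Crucially, by Lemma~\ref{lemma:suffix}, Lemma~\ref{Lemma:prefix} and Lemma~\ref{Lemma:splittingpoint} the asymptotic bifurcations are pinned to the leading $v_{1,n}$-runs of the images and are blind to what the images do in their interiors. Thus a rich interior raises the word complexity without creating any new asymptotic component.

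To quantify this, fix a level $N$ and telescope the first $N$ levels so that $\tau_{[0,N)}$ has a common image length $H_N$ (this is compatible with $(P_1)$, since only prefixes and suffixes are prescribed and the interiors can be padded to equalise lengths). Choose $\tau_{N}(v_{1,N+1})$ to be a word over $V_N$ that begins with $v_{1,N}v_{2,N}$, ends with $v_{m_N,N}$, avoids the factor $v_{m_N,N}v_{1,N}$, and is long enough to contain, as factors, all admissible length-$\ell$ words of the subshift of finite type over $V_N$ that forbids only $v_{m_N,N}v_{1,N}$; such a block can be taken of length at most $|V_N|^{\ell+1}$. A path count shows this family has at least $(|V_N|-1)^{\ell}$ members. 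Each is a factor of $X_{\BT}^{(N)}$, and since $\tau_{[0,N)}$ is constant-length and injective on symbols (recognizability, Remark~\ref{rem:prefix}) it is injective on level-$N$ words of a fixed length, so distinct level-$N$ factors of length $\ell$ yield distinct level-$0$ factors of length $\ell H_N$. Hence
\[
p_{X_{\BT}}(\ell H_N)\ \ge\ (|V_N|-1)^{\ell}.
\]
Because $\tau_N(v_{1,N+1})$ is a single fixed word, this richness saturates: beyond length $\approx|V_N|^{\ell}H_N$ the block contributes no further distinct factors, so one such block produces only a localised \emph{peak} of complexity and no positive entropy.

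Now build $\BT$ in stages, maintaining $(P_1)$ throughout, inserting one rich block at an increasing sequence of levels $N_1<N_2<\cdots$ and using plain $(P_1)$ morphisms with short images in between (so that $H_n\to\infty$). At stage $j$ set the local rate $r_j=\frac{\ln(|V_{N_j}|-1)}{H_{N_j}}$ and the peak scale $n_j=\ell_j H_{N_j}$, so that the display gives $\ln p_{X_{\BT}}(n_j)\ge r_j\,n_j$. Prescribe $r_j=1/j$ by taking the alphabet $|V_{N_j}|=1+e^{H_{N_j}/j}$ (finite). Since $g$ is subexponential, $\frac{\ln g_n}{n}\to0$, so with $r_j$ fixed I may choose $\ell_j$, hence $n_j>n_{j-1}$, large enough that $\frac{\ln g_{n_j}}{n_j}<\tfrac12 r_j$ and $n_j\ge j^2$. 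Then
\[
\ln\frac{p_{X_{\BT}}(n_j)}{g_{n_j}}\ \ge\ r_j n_j-\tfrac12 r_j n_j=\tfrac12 r_j n_j=\frac{n_j}{2j}\ \xrightarrow[j\to\infty]{}\ \infty ,
\]
which gives $\liminf_{n\to\infty}\frac{g_n}{p_{X_{\BT}}(n)}=0$. The choice $r_j\to0$, forced by $H_{N_j}\to\infty$, is exactly what is consistent with the zero entropy imposed by the single asymptotic component.

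The main obstacle is the complexity lower bound together with its calibration against $g$, and it rests on the separation of two phenomena that $(P_1)$ keeps independent: a rich image interior inflates the number of \emph{factors} at a prescribed scale, yet leaves the number of \emph{asymptotic components} equal to one, since (Lemmas~\ref{lemma:suffix}--\ref{Lemma:prefix}) bifurcations live only on the leading $v_{1,n}$-runs. Once this separation is established, the hypothesis $\frac{\ln g_n}{n}\to0$ is precisely what allows a fixed positive local rate $r_j$ to defeat $g$ at scale $n_j$, while deepening the levels drives $r_j\to0$ and so does not contradict $h(X_{\BT})=0$.
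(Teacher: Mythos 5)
Your proposal is correct and follows essentially the same route as the paper: both arguments build a Property $(P_1)$ $\mathcal{S}$-adic subshift with constant-length, injective-on-symbols telescoped morphisms, pack all words of a chosen length over (most of) the level-$n$ alphabet into the free interior of a single image word, and transfer the count $(|V_n|-1)^{\ell}$ down to level $0$ to beat $g$ at the scale $\ell H_n$, relying on the fact that the interior does not create new asymptotic components. The only differences are cosmetic: the paper grows the alphabet linearly ($|V_n|=n+2$) and calibrates via a single parameter $\alpha_n$ with $(n+1)^{\alpha_n-1}\ge g_{\alpha_n L_{n-1}\cdots L_0}$, whereas you use an exponentially large alphabet at selected levels and a two-parameter calibration.
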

\begin{proof}
    We construct a Bratteli-Vershik system as follows. Let $|V_n|=n+2$ for all $n\ge 0$. For $u\in V_{n+1}$, let $L_n(u)$ denote the total number of edges from all the vertices in $V_n$ to $u$. In our construction, each $L_n(u)$ is the same\footnote{Thus, our example is a Toeplitz subshift.}, and we denote it as $L_n$. Let $L_0=3$. Now we define the rest of the levels inductively. Let $L_0,\dots,L_{n-1}$ be given. We choose $\alpha_{n}\in\N$ such that $(n+1)^{\alpha_{n}-1}\ge g_{\alpha_nL_{n-1}\dots L_0}$. It is possible to find such an $\alpha_n$ since $(g_n)_{n\ge 0}$ is sub-exponential. 
    
    We give an ordering on each level so that the morphism read on it will satisfy Property $(P_1)$. In particular, if $V_n=\{v_{1,n},\dots,v_{n+2,n}\},$ we have $\tau_n(v_{1,n+1})=v_{1,n}v_{2,n}wv_{n+2,n}$ where $w$ is a word of length $L_n-3$ that does not contain $v_{n+2,n}v_{1,n}$ as a subword. Since there are no other conditions on $w$, by choosing a large enough $L_n$, we can ensure that $w$ contains all possible words of length $\alpha_n$ with symbols from $V_n\setminus\{v_{1,n}\}$, as subwords. There are $(n+1)^{\alpha_n}$ many words of length $\alpha_n$ with symbols from $V_n\setminus\{v_{1,n}\}$. For each distinct word $u$ of length $\alpha_n$,  since the morphism is injective on symbols, $\tau_{[0,n)}(u)$ is distinct and is of length $\alpha_nL_{n-1}\dots L_1$. For $i>1$, we can define $\tau_n(v_{i,n+1})=v_{1,n}^iv_{2,n}w'v_{n+2,n}$ where $|w'|=L_n-3$ and $w'$ does not contain $v_{n+2,n}v_{1,n}$ as a subword. 
    
    Let $X$ be the $\mathcal{S}$-adic subshift generated by this directive sequence of morphisms. Clearly, 
    \[
    p_X(\alpha_nL_{n-1}\dots L_0)\ge (n+1)^{\alpha_n}\ge (n+1)g_{\alpha_nL_{n-1}\dots L_0}.
    \]
    Let $\tilde{L}(n)=\alpha_nL_{n-1}\dots L_0$. Then we have $\frac{g_{\tilde{L}_n}}{p_X(\tilde{L}_n)}\le \frac{1}{n+1}$. Hence, $\liminf
    _{n\to\infty}\frac{g_n}{p_X(n)}=0.$   
\end{proof}

\section{Subshifts with countably infinite asymptotic components}\label{Sec:countable}
Given a minimal Cantor system $(X,T)$, we construct a subshift that is strong orbit equivalent to $(X,T)$ and that has countably infinite asymptotic components. 

%Let $(V,E)$ be a simple Bratteli diagram with the sequence of incidence matrices $(A_n)_{n\ge 0}$. By telescoping, we assume that $(A_n)_{i,j}>0$ and $(A_n)_{i,1}>|V_{n+1}|$ for all $n\ge 1, 1\le i\le m_{n+1}$ and $1\le j\le m_n$ where $|V_k|=m_k$. Additionally, we also assume that $|V_n|>n$ for all $n\ge 1$ (for this, we need the topological rank to be not finite).

\subsection{A directive sequence of morphisms} 
Given a simple Bratteli diagram, we define the following property of a directive sequence of morphisms. The $\mathcal{S}$-adic subshift generated by this sequence will be shown to have countably many asymptotic components.

\begin{definition}\label{def:countable-asym}
Let $\BT=(\tau_n:V^*_{n+1}\to V^*_n)_{n\ge 0}$ be a directive sequence of morphisms. We say that $\BT$    
    satisfies \emph{Property ($P_\infty$)} if the following conditions are true. For $n\ge 1$, we have
    \begin{enumerate}
    \item $\tau_n$ is primitive,
        \item for $1\le i\le n+1$, $v_{1,n}^2v_{2,n}^2\dots v_{i-1,n}^2v_{i,n}\dots v_{n,n}v_{n+1,n}$ is a prefix of $\tau_n(v_{i,n+1})$ , 
        \item for $ i> n+1$, $v_{1,n}^iv_{2,n}$ is a prefix of $\tau_n(v_{i,n+1})$, and 
        \item for $ 1\le i\le m_{n+1}$, $v_{m_n,n}$ is a suffix
        of $\tau_n(v_{i,n+1})$, and it does not have $v_{m_n,n}v_{1,n}$ as a subword,  \end{enumerate}
where $V_n=\{v_{1,n},\dots,v_{m_n,n}\}$. For $n=0,$ $\tau_0$ is a hat morphism. 
\end{definition}
Similar to the last section, whenever we talk about Property ($P_\infty$), we specify the ordering on $V_n$ as $V_n=\{v_{1,n},\dots,v_{m_n,n}\}$, where $v_{1,n},\dots,v_{n+1,n}$ being the first $n+1$ distinguished letters. 
Also, note that $\tau_n$ is injective on symbols. 
%We have the following lemma, whose proof is similar to that of in the last section. 

 Consider a minimal Cantor system $(X,T)$. Let $(V,E)$ be the associated Bratteli diagram and $A_n$ be the associated incidence matrix at level $n$. Telescoping if necessary and using Lemma~\ref{lemma:A>V}, assume that $|V_n|>n$ and $(A_n)_{i,j}\ge |V_{n+1}|$ for all $n\ge 1$. 
 We can define a new ordering $\le$ on $(V,E)$, such that the directive sequences of morphisms $\BT=(\tau_n)_{n\ge 0}$ read on $(V,E,\le)$ satisfies Property $(P_\infty)$. Using Proposition~\ref{prop:soe} and Proposition~\ref{prop:conjugacy}, the $\mathcal{S}$-adic subshift $X_{\BT}$ is strong orbit equivalent to $(X,T)$. We state this as a lemma below for convenience.
 
\begin{lemma}\label{lemma:countable-asym}
    Let $(X,T)$ be a minimal 
    Cantor system. Then there exists a ordered Bratteli diagram $B'=(V',E',\le')$ such that $(X_{B'},V_{B'})$ is strong orbit equivalent to $(X,T)$ and
    the directive sequence of morphisms, $\BT$, read on $B'$ satisfies Property ($P_\infty$). Moreover, $(X_{\BT},S)$ and $(X_{B'},V_{B'})$ are conjugate. 
    
%     $\BT$ be the directive sequence of  morphisms that satisfy the properties as in Definition~\ref{def:finite-asym}. Let $B=(V,E,\ge')$ be the ordered Bratteli diagram where 
% the directive sequence of morphisms read on $B$ is
%  given by $\BT$, $X_{\BT}^{(n)}$ be the $\mathcal{S}$-adic subshift at level $n$ and $X_{\BT}=X_{\BT}^{(0)}$. Also, let $V_n=\{v_{1,n},\dots,v_{m_n,n}\}.$
%   Then the following are true.
%     \begin{enumerate}
%         \item   The $\mathcal{S}$-adic subshift $(X_{\BT},S)$ is conjugate to the Bratteli-Vershik system $(X_B,V_B)$.  
%         \item For $n\ge 1$, the words $\tau_{[0,n)}(v_{i,n})$ and $\tau_{[0,n)}(v_{i+1,n})$ are not prefix dependent where $i=1,\dots,n$.
%         \item There are at most $k$ many extendable right special word of length 1 in $X_{\BT}^{(n)}$. In particular,   
%     $\tilde{R}_{X_{\BT}^{(n)}}(1)\subseteq\{v_{1,n},\dots,v_{n,n}\}$.
%     \end{enumerate}
\end{lemma}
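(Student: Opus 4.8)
The plan is to follow the strategy of Proposition~\ref{prop:conjugacy2}, replacing the fixed bound coming from $k$ by the level-dependent bound $|V_n| > n$ demanded by Property $(P_\infty)$. First I would invoke Theorem~\ref{thm:minimalCantor} to represent $(X,T)$ as a Bratteli-Vershik system over a simple Bratteli diagram $(V,E)$, and then apply Lemma~\ref{lemma:A>V}: after telescoping if necessary, this produces a simple Bratteli diagram $B'=(V',E')$ with incidence matrices $(A'_n)_{n\ge 1}$ satisfying $|V'_n| > n$ and $(A'_n)_{i,j} \ge |V'_{n+1}|$ for all $n \ge 1$, and such that $(X_{B'},T_{B'})$ is strong orbit equivalent to $(X_B,T_B)$, hence to $(X,T)$. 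Since the two diagrams share the same underlying unordered diagram, Proposition~\ref{prop:soe} guarantees that any ordering we subsequently place on $B'$ keeps it strong orbit equivalent to $(X,T)$.

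Second, I would write $V'_n = \{v_{1,n},\dots,v_{m_n,n}\}$ and define an order $\le'$ on $(V',E')$ making the morphisms $\BT = (\tau_n)_{n\ge 0}$ read on $B'$ satisfy Property $(P_\infty)$. The point to check here is that the incidence entries are large enough to realize the prescribed prefixes and the common suffix simultaneously: condition (2) requires the letters $v_{1,n},\dots,v_{i-1,n}$ to each appear at least twice in $\tau_n(v_{i,n+1})$, and condition (3) requires $v_{1,n}$ to appear at least $i$ times in $\tau_n(v_{i,n+1})$ for $i > n+1$. Since $i$ ranges over $1 \le i \le m_{n+1} = |V'_{n+1}|$ and every entry of $A'_n$ is at least $|V'_{n+1}| \ge i$, there is ample room to prescribe these prefixes, append the suffix $v_{m_n,n}$, avoid the subword $v_{m_n,n}v_{1,n}$ except at the desired cutting points, and still have $\tau_n$ primitive. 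This is the step requiring the most care, but it is a counting verification made routine by the size guarantees of Lemma~\ref{lemma:A>V}.

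Finally, to obtain the conjugacy between $(X_{\BT},S)$ and $(X_{B'},V_{B'})$ I would apply Proposition~\ref{prop:conjugacy}, whose hypotheses are that $X_{\BT}$ is minimal, that each $\tau_n$ is proper for $n\ge 1$, and that each $\tau_n$ extends by concatenation to an injective map from $X_{\BT}^{(n+1)}$ to $X_{\BT}^{(n)}$. Minimality follows from the primitivity built into condition (1); properness is immediate since every $\tau_n(v_{i,n+1})$ begins with $v_{1,n}$ and ends with $v_{m_n,n}$. For injectivity I would reuse the cutting-point argument verbatim from Proposition~\ref{prop:conjugacy2}: if $\tau_n(x) = \tau_n(x')$ for distinct $x,x' \in X_{\BT}^{(n+1)}$, then because $v_{m_n,n}v_{1,n}$ occurs only at cutting points the two factorizations align position by position, so $\tau_n(x_i) = \tau_n(x'_i)$ for all $i\in\Z$, and injectivity of $\tau_n$ on symbols forces $x = x'$. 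The main obstacle, such as it is, lies not in any single step but in confirming that an order realizing Property $(P_\infty)$ exists at every level simultaneously despite its level-growing requirements; this is exactly what the stronger conclusion $|V'_n| > n$ of Lemma~\ref{lemma:A>V} is designed to supply.
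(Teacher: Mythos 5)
Your proposal is correct and follows essentially the same route as the paper: apply Lemma~\ref{lemma:A>V} (with telescoping) to get $|V'_n|>n$ and entries of $A'_n$ at least $|V'_{n+1}|$, define an ordering realizing Property $(P_\infty)$, and conclude via Proposition~\ref{prop:soe} and Proposition~\ref{prop:conjugacy}. The paper states this only in a brief paragraph preceding the lemma; your counting check for conditions (2)--(3) and the reuse of the cutting-point injectivity argument from Proposition~\ref{prop:conjugacy2} are exactly the details it leaves implicit.
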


\begin{remark}\label{Rem:prefixcount}
 Note that  the conclusions of Lemmas~\ref{lemma:prefix_indep},~\ref{lemma:suffix},~\ref{Lemma:splittingpoint} hold true when $\BT$ satisfies Property ($P_\infty$). That is, $v_{i,n}$ is a signal for sequences in $X_{\BT}^{(n)}$ if $1\le i\le n$ and at the bifurcation point, it is always followed by $v_{i,n}$ or $v_{i+1,n}$.  
 However, Lemma~\ref{Lemma:prefix} holds true only for the signals $v_{i,n}$ where $1<i\le n$ That is, for $i>1$, when $v_{i,n}$ is a signal, at the bifurcation point, it is only preceded by $v_{m_n,n}v_{1,n}^2\dots v_{i-1,n}^2$. A modification to Lemma~\ref{Lemma:prefix} has to be stated when $v_{1,n}$ is a signal. In this case, $v_{1,n}$ is preceded either by $v_{m_n,n}$ or by $v_{m_n,n}v_{1,n}$. The second case is possible because unlike in the case of Property ($P_k$), the letter $v_{n+1,n+1}$ can be a signal for sequences in $X_{\BT}^{(n+1)}$ and it is followed by $v_{n+1,n+1}$ or $v_{n+2,n+1}$. Note that,
 $v_{1,n}^2v_{2,n}$ is a prefix of $\tau_n(v_{n+1,n+1})$ and $v_{1,n}^{3}$ is a prefix of $\tau_n(n+2,n+1)$. In any case, Lemma~\ref{Lemma:prefix} is only used to prove Lemma~\ref{Lemma:twocomp} and we shall see later that we still get the required result. 
\end{remark}

\begin{lemma}\label{Lemma:splitinf}
    Let $\BT=(\tau_n:V_{n+1}^*\to V_n^*)_{n\ge 0}$ satisfy Property $(P_\infty)$ where $V_n=\{v_{1,n},\dots,v_{m_n,n}\}$. Let $x,y\in X_{\BT}$ be such that $x_{(-\infty,0)}=y_{(-\infty,0)}$ and $x_0\ne y_0$ and $x^{(n)},y^{(n)}$ be their $\tau_{[0,n)}$-factorization, respectively for $n\ge 1$. Then there exist $N\ge 1$ and $1\le i\le N$ such that $J(x^{(n)},y^{(n)})=(v_{i,n},0)$ for all $n\ge N$. 
\end{lemma}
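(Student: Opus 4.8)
The plan is to show that the bifurcation of each factorized pair always occurs at coordinate $0$ and that the signal index stabilizes. First I would record that, by Lemma~\ref{Lemma:splittingpoint} — whose conclusions remain valid under Property ($P_\infty$) by Remark~\ref{Rem:prefixcount} — each pair $(x^{(n)},y^{(n)})$ again bifurcates exactly at $0$, that is, $x^{(n)}_{(-\infty,0)}=y^{(n)}_{(-\infty,0)}$ and $x^{(n)}_0\ne y^{(n)}_0$. Consequently $J(x^{(n)},y^{(n)})=(v_{i_n,n},0)$ where $v_{i_n,n}:=x^{(n)}_{-1}$, and by Remark~\ref{Rem:prefixcount} the signal index satisfies $1\le i_n\le n$. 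Thus the whole content of the lemma reduces to proving that the integer sequence $(i_n)_{n\ge 1}$ is eventually constant.

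The key step is to relate $i_n$ and $i_{n+1}$. Since $\tau_{[0,n+1)}=\tau_{[0,n)}\circ\tau_n$, the sequence $x^{(n+1)}$ is the $\tau_n$-factorization of $x^{(n)}$ (and likewise for $y$), so I can apply Lemma~\ref{Lemma:splittingpoint} to the pair $(x^{(n)},y^{(n)})$ in $X_{\BT}^{(n)}$ using the single morphism $\tau_n$ (that is, to the tail directive sequence $(\tau_{n+m})_{m\ge 0}$, whose level-$0$ subshift is $X_{\BT}^{(n)}$). This identifies the common part $x^{(n)}_{[-r,0)}$ with the common prefix of $\tau_n(v_{i_{n+1},n+1})$ and $\tau_n(v_{i_{n+1}+1,n+1})$, whose last letter is exactly $x^{(n)}_{-1}=v_{i_n,n}$. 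It then remains to read off that last letter from the explicit prefixes supplied by Property ($P_\infty$).

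I would carry out this computation in two cases. If $i_{n+1}\le n$, condition (2) of Property ($P_\infty$) gives the prefixes $v_{1,n}^2\cdots v_{i_{n+1}-1,n}^2 v_{i_{n+1},n} v_{i_{n+1}+1,n}\cdots$ and $v_{1,n}^2\cdots v_{i_{n+1}-1,n}^2 v_{i_{n+1},n}^2\cdots$, which first disagree right after a single $v_{i_{n+1},n}$; hence the common prefix ends in $v_{i_{n+1},n}$ and $i_n=i_{n+1}$. If instead $i_{n+1}=n+1$, then by Lemma~\ref{lemma:suffix} the signal $v_{n+1,n+1}$ is faced with $v_{n+2,n+1}$, and conditions (2) and (3) give the prefixes $v_{1,n}^2 v_{2,n}\cdots$ and $v_{1,n}^{n+2}v_{2,n}$, which first disagree after $v_{1,n}^2$; hence the common prefix ends in $v_{1,n}$ and $i_n=1$. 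This is precisely the exceptional phenomenon flagged in Remark~\ref{Rem:prefixcount}, and it is the only way the index can fail to be preserved.

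These two cases yield the forward transition rule: if $i_n\ge 2$ then necessarily $i_{n+1}=i_n$, while if $i_n=1$ then $i_{n+1}\in\{1,n+1\}$. From this I conclude that $(i_n)$ is eventually constant. Indeed, let $m=\inf\{n\ge 1 : i_n\ge 2\}$. If $m=\infty$ then $i_n=1$ for all $n$ and I take $N=1$, $i=1$. If $m<\infty$, then $i_m\ge 2$ forces $i_n=i_m$ for every $n\ge m$ by an immediate induction, so I take $N=m$ and $i=i_m$, noting $i=i_m\le m=N$. In either case $J(x^{(n)},y^{(n)})=(v_{i,n},0)$ for all $n\ge N$, as required. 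The main obstacle is the case analysis of the common prefix, and in particular making sure the exceptional case $i_{n+1}=n+1$ resets the index only to $1$ and never to an intermediate value, so that once the index exceeds $1$ it is frozen and the sequence cannot oscillate.
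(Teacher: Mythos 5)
Your proof is correct and arrives at the same transition rule for the signal index as the paper --- the index is preserved once it exceeds $1$, and from index $1$ it either stays at $1$ or jumps to $n+1$ and is then frozen --- followed by the same stabilization argument. What you do differently is the direction of the computation: the paper works upward, using the preceding context of the bifurcation at level $N$ (the modified Lemma~\ref{Lemma:prefix} from Remark~\ref{Rem:prefixcount}, i.e., whether the signal $v_{1,N}$ is preceded by $v_{m_N,N}v_{1,N}$ or $v_{m_N,N}v_{1,N}^2$) to identify $\{x^{(N+1)}_0,y^{(N+1)}_0\}$ and hence the level-$(N+1)$ signal; you work downward, reading $i_n$ off as the last letter of the common prefix of $\tau_n(v_{i_{n+1},n+1})$ and $\tau_n(v_{i_{n+1}+1,n+1})$, and then invert the backward map. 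Your route bypasses the preceding-context case analysis and makes it transparent that a reset can only send the index to $1$, never to an intermediate value. Two things are worth making explicit: that common prefix is always nonempty (every $\tau_n$-image of a letter begins with $v_{1,n}$), so it indeed has a last letter equal to $x^{(n)}_{-1}$; and the one-level application of Lemma~\ref{Lemma:splittingpoint} to the tail sequence is legitimate because its proof uses only recognizability, prefix-independence of consecutive images, and the level-$(n+1)$ version of Lemma~\ref{lemma:suffix}, none of which require the tail sequence to begin with a hat morphism.
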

\begin{proof}

Let $J(x^{(N)},y^{(N)})=(v_{i,N},0)$ for some $1<i\le N$ and $N\ge 1$. Then $x^{(N)}_{[-2i,-1]}=v_{m_N,N}v_{1,N}^2\dots v_{i-1,N}^2v_{i,N}=y^{(N)}_{[-2i,-1]}$ and $\{x^{(N)}_0,y^{(N)}_0\}=\{v_{i,N},v_{i+1,N}\}$. This implies that $$\{x^{(N+1)}_0,y^{(N+1)}_0\}=\{v_{i,N+1},v_{i+1,N+1}\}$$ and hence, $J(x^{(N+1)},y^{(N+1)})=(v_{i,N+1},0). $ Inductively, $J(x^{(n)},y^{(n)})=(v_{i,n},0),$ for all $n\ge N$.

When $J(x^{(N)},y^{(N)})=(v_{1,N},0)$ for some $N\ge 1$, we have $\{x^{(N)}_0,y^{(N)}_0\}=\{v_{1,N},v_{2,N}\}$ and two possible scenarios: either $x^{(N)}_{[-2,-1]}=v_{m_N,N}v_{1,N}=y^{(N)}_{[-2,-1]}$ or $x^{(N)}_{[-3,-1]}=v_{m_N,N}v^2_{1,N}=y^{(N)}_{[-3,-1]}$. We first investigate the second scenario. In this case, by Remark~\ref{Rem:prefixcount}, we have $J(x^{(N+1)},y^{(N+1)})=(v_{N+1,N+1},0)$ and hence $J(x^{(n)},y^{(n)})=(v_{N+1,n},0)$ for all $n\ge N+1$ by the previous argument. Now we focus on the first scenario. In this case, we have, $J(x^{(N+1)},y^{(N+1)})=(v_{1,N+1},0)$. Now we can further divide this into two cases: either $x^{(n)}_{[-2,-1]}=v_{m_n,n}v_{1,n}=y^{(n)}_{[-2,-1]}$ for all $n>N$, or $x^{(n_0)}_{[-3,-1]}=v_{m_{n_0},n_0}v^2_{1,n_0}=y^{(n_0)}_{[-3,-1]}$ for some $n_0>N$. In the first case we have $J(x^{(n)},y^{(n)})=(v_{1,n},0)$ for all $n\ge N$ and in the second case we have $J(x^{(n)},y^{(n)})=(v_{n_0+1,n},0)$ for all $n>n_0$. 
\end{proof}

\subsection{Main results}

\begin{theorem}\label{thm:count}
    For any given minimal Cantor system $(X,T)$, there exists a subshift which has countably infinite asymptotic components and that is strong orbit equivalent to $(X,T)$. 
\end{theorem}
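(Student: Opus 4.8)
The plan is to mirror the proof of Theorem~\ref{lemma:kasym}, replacing the fixed family $v_{1,n},\dots,v_{k+1,n}$ of distinguished letters by the growing family available under Property $(P_\infty)$. By Lemma~\ref{lemma:countable-asym} I may fix an ordered Bratteli diagram $B'$ on which the morphisms read form a directive sequence $\BT=(\tau_n)_{n\ge 0}$ satisfying Property $(P_\infty)$, with the associated $\mathcal S$-adic subshift $(X_{\BT},S)$ simultaneously strong orbit equivalent to $(X,T)$ and conjugate to the Bratteli--Vershik system over $B'$. Since conjugacy preserves asymptotic components, it then suffices to prove that $X_{\BT}$ has countably infinitely many of them.

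First I would construct one asymptotic pair per index. For each $i\ge 1$ and all levels $n\ge i$ (so that $v_{i,n},v_{i+1,n}$ are genuinely among the distinguished letters, which is guaranteed by $m_n>n\ge i$), I would define exactly the nested cylinders $(C_{w_{i,n},\alpha_{i,n}})_n$ and $(C_{u_{i,n},\alpha_{i,n}})_n$ used in the proof of Theorem~\ref{lemma:kasym}, with the same parity-alternating choice of suffix $v_{i,n}^2$ versus $v_{i,n}v_{i+1,n}$. Their intersections define points $x^i\ne y^i$ with $x^i_{(-\infty,0)}=y^i_{(-\infty,0)}$ and $x^i_0\ne y^i_0$, so $(x^i,y^i)\in\mathcal{AP}(X_{\BT})$; primitivity (hence minimality) and aperiodicity of $X_{\BT}$ force $x^i$ and $y^i$ into distinct orbits. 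The centered-representation/uniqueness argument from Theorem~\ref{lemma:kasym} transfers verbatim (comparing the $0$-th factorized letters $v_{i+1,n}\ne v_{j+1,n}$ at a large even level) to show that for $i\ne j$ none of $x^i,y^i$ lies in $\mathrm{Orb}_S(x^j)\cup\mathrm{Orb}_S(y^j)$.

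The main obstacle is to re-establish the analogue of Lemma~\ref{Lemma:twocomp}, namely that every asymptotic component of $X_{\BT}$ consists of exactly two orbits, since Lemma~\ref{Lemma:prefix} only survives for signals $v_{i,n}$ with $1<i\le n$ and fails for $v_{1,n}$ (Remark~\ref{Rem:prefixcount}). Here I would run the three-point argument of Lemma~\ref{Lemma:twocomp}, using Lemma~\ref{Lemma:splitinf} in place of Lemma~\ref{Lemma:prefix}: given $x\sim y$ and $x\sim z$ in one component with relative shift $t$, I pass to a level $n$ large enough that every block $|\tau_{[0,n)}(v)|$ exceeds $t$, and invoke Lemma~\ref{Lemma:splitinf} to pin down, for $n\ge N$, the stabilized data $J(x^{(n)},y^{(n)})=(v_{i,n},0)$ together with its stabilized left context $v_{m_n,n}v_{1,n}^2\dots v_{i-1,n}^2v_{i,n}$. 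Two distinct bifurcations occurring within bounded distance $t$ at such a level then force two incompatible stabilized left contexts, a contradiction. The delicate case is $i=1$, where by Remark~\ref{Rem:prefixcount} the context may be either $v_{m_n,n}v_{1,n}$ or $v_{m_n,n}v_{1,n}^2$; I would resolve it exactly as in the proof of Lemma~\ref{Lemma:splitinf}, where this ambiguity is shown to stabilize (either to the signal $v_{1,n}$ at all higher levels, or to a signal $v_{N+1,n}$ after lifting one level), so that the incompatibility argument again applies.

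Finally I would prove exhaustiveness, paralleling Claim~\ref{Claim:1}: given any non-trivial asymptotic component with representative pair $(x,y)$ normalized so that $x_{(-\infty,0)}=y_{(-\infty,0)}$ and $x_0\ne y_0$, Lemma~\ref{Lemma:splitinf} yields a well-defined stabilized index $i$ with $J(x^{(n)},y^{(n)})=(v_{i,n},0)$ for all $n\ge N$; by Lemma~\ref{Lemma:splittingpoint} this matches the left-infinite tail of $x$ with that of $x^i$ on windows $[-s(n),0)$ with $s(n)\to\infty$, whence $x\sim x^i$, i.e.\ the component is that of $x^i$. Combining the steps: every component contains some $x^i$, so there are at most countably many; meanwhile the two-orbit property together with $x^i,y^i\notin\mathrm{Orb}_S(x^j)\cup\mathrm{Orb}_S(y^j)$ for $i\ne j$ forces $x^i\not\sim_{\mathrm{Orb}}x^j$, so the components of the $x^i$ are pairwise distinct. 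Hence $X_{\BT}$ has exactly countably infinitely many asymptotic components, which concludes the proof.
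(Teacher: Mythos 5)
Your proposal is correct and follows essentially the same route as the paper: reduce to a $(P_\infty)$-directive sequence via Lemma~\ref{lemma:countable-asym}, build the pairs $(x^i,y^i)$ from the same parity-alternating nested cylinders starting at level $i$, separate orbits/components by the centered-representation argument of Theorem~\ref{lemma:kasym}, and use Lemma~\ref{Lemma:splitinf} both to replace Lemma~\ref{Lemma:prefix} in the two-orbit argument and to prove exhaustiveness. If anything, you spell out the adaptation of Lemma~\ref{Lemma:twocomp} (including the delicate $v_{1,n}$ case) more explicitly than the paper, which leaves that step to a remark and a ``similar analysis.''
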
 
\begin{proof}
By Lemma~\ref{lemma:countable-asym}, it is enough to show that if $\BT$ satisfies Property $(P_\infty)$, then $X_{\BT}$ has countably infinite asymptotic components.   
We construct countably infinite asymptotic pairs for $X_{\BT}$, say $\{(x^n,y^n)\mid n\ge 1\}$. We then prove that these pairs contribute to asymptotic components, no two asymptotic components are the same and that these are the only possible ones. 

For $i\in\N$, we define two decreasing nested sequences $(C_{w_{i,n},m_{i,n}})_{n\ge i}$ and $(C_{u_{i,n},\alpha_{i,n}})_{n\ge i}$ of cylinders on $X_{\BT}$ where $w_{i,n},u_{i,n}\in\mathcal{L}(X_{\BT})$ and $\alpha_{i,n}<0$ are defined inductively. 
Let $\alpha_{i,i}=-|\tau_{[0,i)}(v_{1,i}^2\dots v_{i-1,i}^2v_{i,i})|$ and for $n>i,$
\[\alpha_{i,n}=-|\tau_{[0,n)}(v_{1,n}^2\dots v_{i-1,n}^2v_{i,n})|+\alpha_{i,n-1}.\] 
For $i\ge 1,n\ge 0$, define 
\begin{equation*}
    w_{i,i+n}=\begin{cases}
        \tau_{[0,i+n)}(v_{1,i+n}^2\dots v_{i-1,i+n}^2v_{i,i+n}^2), &\text{if } n\text{ is odd}\\
        \tau_{[0,i+n)}(v_{1,i+n}^2\dots v_{i-1,i+n}^2v_{i,i+n}v_{i+1,i+n}), &\text{if } n\text{ is even},
    \end{cases}
\end{equation*}
\begin{equation*}
    u_{i,i+n}=\begin{cases}
        \tau_{[0,i+n)}(v_{1,i+n}^2\dots v_{i-1,i+n}^2v_{i,i+n}v_{i+1,i+n}), &\text{if } n\text{ is odd}\\
        \tau_{[0,i+n)}(v_{1,i+n}^2\dots v_{i-1,i+n}^2v_{i,i+n}^2), &\text{if } n\text{ is even}.
    \end{cases}
\end{equation*}

% \begin{eqnarray*}
%     w_{n,n+2i}&=&\tau_{[0,n+2i)}(v_{1,n+2i}^2\dots v_{n,n+2i}v_{n+1,n+2i}),\\
%     w_{n,n+2i-1}&=&\tau_{[0,n+2i-1)}(v_{1,n+2i-1}^2\dots v_{n,n+2i-1}^2),
%     \\ 
%      u_{n,n+2i}&=&\tau_{[0,n+2i)}(v_{1,n+2i}^2\dots v_{n,n+2i}^2) \text{ and}\\
%     u_{n,n+2i-1}&=&\tau_{[0,n+2i-1)}(v_{1,n+2i-1}^2\dots v_{n,n+2i-1}v_{n+1,n+2i-1}). 
% \end{eqnarray*}

\begin{figure}[ht]
\begin{center}
\begin{tikzpicture}
\beginpgfgraphicnamed{situation-b}
\begin{scope}[very thick]

\draw[loosely dotted] (-9,0) -- (-8,0) ;
\draw (-8,0)--(1,0);
\draw[blue] (1,0)--(5,0);
\draw[loosely dotted, blue] (5,0)--(6,0);
\draw (1,.1)--(1,-.1) node[above=1mm]{$0$};
\draw (-3,.1)--(-3,-.1) node[above=1mm]{$\alpha_{n,n}$};
\draw[snake=brace, mirror snake] (-3,-0.1)-- node[below] {$\tau_{[0,n)}(v_{1,n}^2\dots v_{n-1,n}^2v_{n,n})$} (1,-0.1) ;
\draw (3,.1)--(3,-.1) ;
\draw[snake=brace, mirror snake,blue] (1,-0.1)-- node[below] {$\tau_{[0,n)}(v_{n,n})$} (3,-0.1) ;

\draw (-7,.1)--(-7,-.1) node[above=1mm]{$\alpha_{n,n+1}$};
\draw[snake=brace, mirror snake] (-7,-.8)-- node[below] {$\tau_{[0,n+1)}(v_{1,n+1}^2\dots v_{n-1,n+1}^2v_{n,n+1})$} (-3,-.8) ;
%\draw (3.5,.1)--(3.5,-.1) ;
\draw[snake=brace, mirror snake] (-2.96,-0.8)-- node[below] {$\tau_{[0,n+1)}(v_{n+1,n+1})$} (4,-0.8) ;
\end{scope}
\endpgfgraphicnamed
\end{tikzpicture}
\caption{Construction of cylinders to get $x^n$}
\label{fig:countable-asym}
\end{center}
\end{figure}

\begin{figure}[ht]
\begin{center}
\begin{tikzpicture}
\beginpgfgraphicnamed{situation-b}
\begin{scope}[very thick]

\draw[loosely dotted] (-9,0) -- (-8,0) ;
\draw (-8,0)--(1,0);
\draw[blue] (1,0)--(5,0);
\draw[loosely dotted, blue] (5,0)--(6,0);
\draw (1,.1)--(1,-.1) node[above=1mm]{$0$};
\draw (-3,.1)--(-3,-.1) node[above=1mm]{$\alpha_{n,n}$};
\draw[snake=brace, mirror snake] (-3,-0.1)-- node[below] {$\tau_{[0,n)}(v_{1,n}^2\dots v_{n-1,n}^2v_{n,n})$} (1,-0.1) ;
\draw (3,.1)--(3,-.1) ;
\draw[snake=brace, mirror snake,blue] (1,-0.1)-- node[below] {$\tau_{[0,n)}(v_{n+1,n})$} (3,-0.1) ;

\draw (-7,.1)--(-7,-.1) node[above=1mm]{$\alpha_{n,n+1}$};
\draw[snake=brace, mirror snake] (-7,-.8)-- node[below] {$\tau_{[0,n+1)}(v_{1,n+1}^2\dots v_{n-1,n+1}^2v_{n,n+1})$} (-3,-.8) ;
%\draw (3.5,.1)--(3.5,-.1) ;
\draw[snake=brace, mirror snake] (-2.96,-0.8)-- node[below] {$\tau_{[0,n+1)}(v_{n,n+1})$} (4,-0.8) ;
\end{scope}
\endpgfgraphicnamed
\end{tikzpicture}
\caption{Construction of cylinders to get $x^n$}
\label{fig:countable-asym1}
\end{center}
\end{figure}
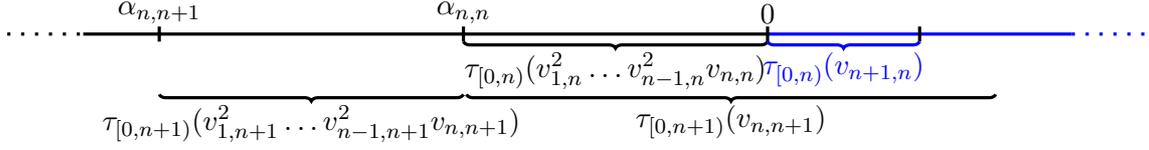

It is easy to check that $(C_{w_{i,n},\alpha_{i,n}})_{n\ge i}$ and $(C_{u_{i,n},\alpha_{i,n}})_{n\ge i}$ are decreasing nested sequences of cylinders and we define $x^i=\bigcap_{n\ge i}C_{w_{i,n},\alpha_{i,n}}$ and $y^i=\bigcap_{n\ge i}C_{u_{i,n},\alpha_{i,n}}$. We refer to Figures~\ref{fig:countable-asym}
and~\ref{fig:countable-asym1} to see the first two steps of the construction.

Note that $X_{\BT}$ is recognizable and minimal. 
Hence, a similar analysis as in the proof of Theorem~\ref{thm:kmany} using the $\tau_{[0,n)}$-factorizations for $n>N$ where $N$ is as in Lemma~\ref{Lemma:splitinf}, will confirm that $x^n$ and $y^n$ are not in the same orbit and for $n\ne m$, $x^n$ and $x^m$ are not orbit asymptotic. 

Now we are left to show that if $(x,y)$ is an representative of an asymptotic component for $x,y\in X_{\BT}$, then $x\sim_{Orb} x^i$ for some $i\in\N$. Without loss of generality, assume that $x_{(-\infty,0)}=y_{(-\infty,0)}$ and $x_0\ne y_0$. 
Take $n\ge N$ where $N$ is as in Lemma~\ref{Lemma:splitinf}. 
Let $x^{(n)},y^{(n)}\in X_{\BT}^{(n)}$ be the $\tau_{[0,n)}$-factorization of $x,y$, respectively. 
Clearly, $x^{(n)}_{-1}=v_{i,n}$ for some $1\le i\le n$. This implies that $x_{[-s(n),0)}=x^i_{[-s(n),0)}$ where $s(n)=r(n)+|\tau_{[0,n)}(v_{i,n})|$ and $r(n)$ is the length of the common prefix of $\tau_{[0,n)}(v_{i,n})$ and $\tau_{[0,n)}(v_{i+1,n})$.  Since $s(n)\to\infty$ as $n\to \infty$, we get $x$ and $x^i$ to be asymptotic. 
\end{proof}

\begin{corollary}
     Let $\BT=(\tau_n)_{n\ge 0}$ be a directive sequence of morphisms that satisfies Property ($P_\infty$) and let $X_{\BT}$ be the associated $\mathcal{S}$-adic subshift. Then $(X_{\BT},S)$ has zero entropy. 
\end{corollary}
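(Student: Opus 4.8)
The plan is to deduce this corollary immediately from the two main results already available in this section, since all the genuine combinatorial work has in fact been carried out elsewhere. The key observation is that the proof of Theorem~\ref{thm:count} establishes something stronger than its stated conclusion: it shows that \emph{any} $\mathcal{S}$-adic subshift $X_{\BT}$ whose directive sequence $\BT$ satisfies Property ($P_\infty$) has exactly countably infinitely many asymptotic components. I would therefore begin by invoking that analysis directly — namely, since $\BT$ satisfies Property ($P_\infty$), the subshift $X_{\BT}$ has countably infinitely many asymptotic components, and in particular at most countably many.

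With this count in hand, the conclusion follows by feeding it into Theorem~\ref{thm:main3}, which asserts that any topological dynamical system with at most countably many asymptotic components has zero topological entropy. Applying this to $(X_{\BT},S)$ yields $h(X_{\BT})=0$, which is exactly the claim. This mirrors verbatim the earlier corollary proved for Property ($P_k$), the only change being that the finite count coming from Property ($P_k$) is replaced by the countably infinite count coming from Property ($P_\infty$); both counts lie within the hypothesis ``at most countably many'' of Theorem~\ref{thm:main3}.

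I do not expect any genuine obstacle here. All the difficulty has already been absorbed into the recognizability and bifurcation analysis of Lemmas~\ref{lemma:prefix_indep} through~\ref{Lemma:splitinf} (which pins down the asymptotic pairs of $X_{\BT}$ and lets one enumerate the components) and into the measure-theoretic argument behind Theorem~\ref{thm:main3} (using ergodicity, the Poincar\'e recurrence theorem, and the result of~\cite{BHR02} that the set of asymptotic points carries full measure under any positive-entropy ergodic measure). The one point to be careful about is purely bookkeeping: to cite Theorem~\ref{thm:main3} in its ``at most countably many'' formulation, so that the countably infinite value obtained from Property ($P_\infty$) is correctly recognized as falling within its hypotheses.
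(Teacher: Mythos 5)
Your proposal is correct and is essentially identical to the paper's own proof: the paper likewise extracts the countable count of asymptotic components from the Property ($P_\infty$) analysis and then applies Theorem~\ref{thm:main3}. (The paper's printed proof cites Proposition~\ref{prop:kasymToep} for the count, which appears to be a misplaced reference to the Toeplitz construction; your citation of the argument behind Theorem~\ref{thm:count} is the intended and correct source.)
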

\begin{proof}
    By Proposition~\ref{prop:kasymToep}, the $\mathcal{S}$-adic subshift $X_{\BT}$ has only countably many asymptotic components. Hence, the result is followed by Theorem~\ref{thm:main3}.
\end{proof}
\section{Asymptotic components within a strong orbit equivalence class of Toeplitz shifts}\label{Sec:Toeplitz}
\noindent 
In this section, we explore how our previous results can be realized within the context of Toeplitz subshifts. That is, we aim to find examples of Toeplitz subshifts with a prescribed number of asymptotic components within a strong orbit equivalence class. We show this is possible, under the obvious restriction that the strong orbit equivalence class contains at least one Toeplitz subshift. We give similar proofs to the general case, but the methods are slightly different since they are tailored to handle the Toeplitz case. 

A simple Bratteli diagram $(V,E)$ is said to have \emph{equal path number property} if for all $n\ge 0$ and $v,v'\in V_n,$ we have $\#(r^{-1}(v))=\#(r^{-1}(v'))$ where $\#(.)$ denotes the cardinality of a set. This is same as saying that the incidence matrices $(A_n)_{n\ge 0}$ have equal row sum property (that is sum of entries on rows of $A_n$ are constant for each $n\ge 0$). Clearly, when $(V,E)$ has equal path number property, there are equal number of paths from $v_0$ to each vertices in $V_n$ for $n\ge 1$. Note that the equal path number property is preserved under telescoping. We recall the following result for the sole purpose of this section. %\textcolor{blue}{add reference}.
\begin{theorem}\label{thm:toeplitz}
    A minimal subshift is Toeplitz if and only if it has a Bratteli-Vershik system $(X_B,T_B)$ where the Bratteli diagram $B=(V,E)$ satisfies equal path number property. 
\end{theorem}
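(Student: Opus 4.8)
The plan is to route the proof through the dynamical characterization of Toeplitz subshifts as the \emph{expansive almost one-to-one extensions of odometers} (adding machines), a result due to Williams (see also Downarowicz's survey on Toeplitz flows). Since a minimal subshift is automatically expansive, the theorem reduces to the statement that, for a properly ordered Bratteli diagram, the equal path number property is precisely the condition under which the canonical factor map from $X_B$ onto an odometer is almost one-to-one. I would prove the two implications separately, exhibiting this factor explicitly in both directions.

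For the ($\Leftarrow$) direction, suppose the minimal subshift is conjugate to a Bratteli--Vershik system $(X_B,T_B)$ whose diagram has equal path number. Writing $h_n$ for the common number of paths from $v_0$ to any vertex of $V_n$, the equal path number property is exactly the equal row sum property of the incidence matrices, so $h_{n+1}=c_n h_n$ where $c_n$ is the common row sum of $A_n$; in particular $h_n \mid h_{n+1}$ and, as $X_B$ is an infinite simple system, $h_n\to\infty$. I would define $\pi\colon X_B\to G:=\varprojlim \Z/h_n\Z$ by sending a path to the sequence of ranks of its truncations in the reverse lexicographic order. This map is continuous, surjective, and intertwines $T_B$ with the $+1$ translation on $G$ (the Vershik map increments the rank with carry, which is exactly odometer addition; note $T_B(x_{\max})=x_{\min}$ matches $-1+1=0$). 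The key point is that $\pi^{-1}(0)=X_B^{\min}$, since having rank $0$ at every level means the path is minimal at every level; by proper ordering $X_B^{\min}$ is a single point, so $\pi$ is almost one-to-one. Hence $X_B$ is an expansive almost one-to-one extension of the odometer $G$, and therefore Toeplitz.

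For the ($\Rightarrow$) direction, let $(X,S)$ be a Toeplitz subshift. By the same characterization it is an almost one-to-one extension $\pi\colon X\to G=\varprojlim\Z/p_n\Z$ of an odometer. I would build a nested sequence of clopen Kakutani--Rokhlin partitions adapted to $\pi$: taking $B_n=\pi^{-1}(\{g\equiv 0 \bmod p_n\})$ as a base, the sets $S^iB_n$ for $0\le i<p_n$ partition $X$ into a tower of height exactly $p_n$, and I would split $B_n$ into finitely many clopen columns according to the values of a growing central window $x_{[-\ell_n,\ell_n]}$, choosing $\ell_n\to\infty$ so that the partitions are generating (here expansiveness of the subshift is used). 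Assembling these partitions into a properly ordered Bratteli diagram in the standard way (as in the proof of Theorem~\ref{thm:minimalCantor}) yields a Bratteli--Vershik model conjugate to $(X,S)$; since every column at level $n$ is a piece of the height-$p_n$ tower, each vertex of $V_n$ receives exactly $p_n$ paths from the root, which is the equal path number property.

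The main obstacle, and the conceptual heart of the argument, is the interplay between the equal path number property and the almost one-to-one property. In the ($\Leftarrow$) direction this is the identification $\pi^{-1}(0)=X_B^{\min}$ together with proper ordering; in the ($\Rightarrow$) direction it is the requirement that all Kakutani--Rokhlin towers be arranged to have \emph{exactly} the common height $p_n$ while the partitions remain generating, and that the resulting diagram be properly ordered with compatible maximal and minimal paths over the single exceptional fiber of $\pi$. I would treat the equivalence ``Toeplitz $\Longleftrightarrow$ expansive almost one-to-one extension of an odometer'' as external input, citing Williams and Downarowicz.
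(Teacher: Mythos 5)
The paper does not prove this statement: it is recalled as known background (it is the theorem of Gjerde and Johansen on Bratteli--Vershik models of Toeplitz flows), so there is no internal proof to compare against. Your proposal is essentially the standard proof of that result, and its outline is sound: in the ($\Leftarrow$) direction the rank map $\pi$ onto $\varprojlim \Z/h_n\Z$ is well defined because the equal path number property makes $\mathrm{rank}_{n+1}\equiv \mathrm{rank}_n \pmod{h_n}$, it intertwines $T_B$ with $+1$, and $\pi^{-1}(0)=X_B^{\min}$ is a singleton by proper ordering; in the ($\Rightarrow$) direction the constant return time $p_n$ to $\pi^{-1}(H_n)$ is what forces all Kakutani--Rokhlin towers, hence all path counts, to equal $p_n$. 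Two points you should tighten. First, almost one-to-one-ness of a factor map between minimal systems does follow from a single singleton fibre, but say so, since that is what you actually verify. Second, for the reconstructed diagram to be \emph{properly} ordered you need singleton fibres over both $0$ (giving the unique minimal path) and $-1$ (giving the unique maximal path, since $X_B^{\max}$ corresponds to $\bigcap_n S^{-1}B_n=\pi^{-1}(-1)$); this is automatic because the set of singleton fibres is invariant under the odometer rotation, but your phrase ``over the single exceptional fibre'' conflates the two and should be corrected. With those adjustments the argument is complete modulo the external equivalence ``Toeplitz $\Leftrightarrow$ expansive minimal almost one-to-one extension of an odometer,'' which is a legitimate citation.
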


By the above theorem, for a Toeplitz subshift one may take a Bratteli diagram that satisfies the equal path number property.
However, the construction of Bratteli diagram as in Lemma~\ref{lemma:A>V} to obtain a directive sequence of morphisms that satisfies Property $(P_k)$ or Property $(P_\infty)$, does not preserve the equal path number property. Hence, we have to do a different analysis in order to obtain a Toeplitz subshift with the required number of asymptotic components. We mention the construction for the case of $k$ many asymptotic components. The case for countably many asymptotic components can be easily adapted.   

Fix $k\ge 1$. Let $(V,E)$ be a simple Bratteli diagram where $V=V_0\cup V_1\cup\dots, V_n=\{v_{1,n},\dots,v_{m_n,n}\}$ and $E=E_1\cup E_2\cup\dots$. Let $A_n$ be its incidence matrix at level $n$ and $(A_n)_{i,j}$ denote the $(v_{i,n+1},v_{j,n})$-th entry of $A_n$. Telescoping if necessary, we assume that $(A_n)_{i,j}>3,$  for all $n\ge 1, 1\le i\le m_{n+1}$ and $1\le j\le m_n$.
%with respect to which the associated Bratteli-Vershik system is conjugate to $(X,T)$. 
\subsection{A directive sequence of morphisms} Given a simple Bratteli diagram, we define a partial ordering on it such that the directive sequence of morphisms read on it is given as follows. The $\mathcal{S}$-adic subshift generated by  this sequence will be shown to have $k$ asymptotic components later in this section.

\begin{definition}\label{def:almostlr}
    Let $(V,E)$ be a simple Bratteli diagram with other notations as mentioned as earlier. We define an ordered Bratteli diagram $B'=(V,E,\le')$ such that the 
   directive sequence of morphisms read on $B'$, $\BT=(\tau_n)_{n\ge 0}$ is as follows. For $n\ge 1$, $\tau_n:V^*_{n+1}\to V^*_n$ be defined as,  
\[
\tau_n(v_{i,n+1})=
\begin{cases}
v_{1,n}^2\dots v_{s-1,n}^2v_{s,n}\dots  v_{k+1,n}v_{1,n}^{\ell_{i,1}}\dots v_{m_n,n}^{\ell_{i,m_n}}, & i\equiv s\text{ mod }k+1, 1\le s\le k+1,i\ne 1\\
  v_{1,n}^{3}v_{2,n}\dots v_{k+1,n}v_{1,n}^{\ell_{1,1}}\dots v_{m_n}^{\ell_{1,m_n}},& i=1,
\end{cases}
\]
where $V_n=\{v_{1,n},\dots,v_{m_n,n}\}$, for $i\ne 1$, $\ell_{i,t}=(A_n)_{i,t}-2$ for $1\le t\le s-1$, $\ell_{i,t}=(A_n)_{i,t}-1$ for $s\le t\le k+1$, and $\ell_{i,t}=(A_n)_{i,t}$ for $k+2\le t\le m_n$, and $\ell_{1,1}=(A_n)_{1,1}-3, \ell_{1,2}=(A_n)_{1,2}-1$ and $\ell_{1,t}=(A_n)_{1,t}$ for $3\le t\le m_n$. Note that $A_n$ is the incidence matrix of $\tau_n$ for all $n\ge 1$. For $n=0,$ we define a hat morphism $\tau_0:V_1^*\to E_1^*$ as $\tau_0(v_{i,1})=e_{i,1}\dots e_{i,k_i}$
where $e_{i,j}$ is an edge from $v_{0}$ to $v_{i,1}$, $k_i=(A_0)_{i,1}$ and $V_0=\{v_0\}$. 
\end{definition}
The morphisms defined in Definition~\ref{def:almostlr} are well defined since $(A_n)_{i,j}>3$ for $n\ge 1$ and it is primitive. However, unlike morphisms that satisfy Property $(P_k)$, $\tau_n$ need not be injective on symbols. 
In any case, we have the conjugacy between the associated Bratteli-Vershik system $(X_{B'},V_{B'})$ and the $\mathcal{S}$-adic subshift $(X_{\BT},S)$ (Proposition~\ref{prop:conjugacy3}). 

\begin{remark}\label{Rem:D_s}
    For $s\in\{1,2,\dots,k+1\},$ and $n\ge 1$, denote, 
    \[
    D_{s,n}=\{v_{i,n}\mid 1<i\le m_{n},\ i\equiv s\text{ mod } k+1\} \text{ and } D_{0,n}=\{v_{1,n}\}.
    \]  
    For $1\le i\le m_{n+1}$, in $\tau_n(v_{i,n+1})$, we always have that for $j\ne k+1$, the letter $v_{j,n}$ is  followed by either $v_{j,n}$ or $v_{j+1,n}$, and $v_{k+1,n}$ is followed by either $v_{k+1,n},v_{k+2,n}$ or $v_{1,n}$. In any case, for any fixed $0\le s \le k+1$, an element of $D_{s,n}$ is never followed by another distinct element of $D_{s,n}$; the only possible exception is the element itself. In addition, elements in $V_n$ are never followed by two distinct elements belonging to the same $D_{s,n}$ for some $s$. These facts will be useful in proving the following results. 
\end{remark}

\begin{proposition}\label{prop:conjugacy3}
   Let $(V,E)$ be a simple Bratteli diagram and $B'=(V,E,\le')$ be the ordered Bratteli diagram as in Definition~\ref{def:almostlr}. Then the $\mathcal{S}$-adic subshift $(X_{\BT},S)$ is conjugate to the Bratteli-Vershik system $(X_{B'},V_{B'})$.  
\end{proposition}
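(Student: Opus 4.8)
The plan is to verify the three hypotheses of Proposition~\ref{prop:conjugacy} for the directive sequence $\BT=(\tau_n)_{n\ge 0}$ read on $B'$: that $X_{\BT}$ is minimal, that $\tau_n$ is proper for $n\ge 1$, and that each $\tau_n$ extends by concatenation to an injective map $X_{\BT}^{(n+1)}\to X_{\BT}^{(n)}$ for $n\ge 0$. Minimality is immediate, since every entry of every incidence matrix $A_n$ exceeds $3$, so each $\tau_n$ is primitive and hence $\BT$ is primitive. Properness is read directly off Definition~\ref{def:almostlr}: every $\tau_n(v_{i,n+1})$ begins with $v_{1,n}$, and since $\ell_{i,m_n}=(A_n)_{i,m_n}>0$ it ends with $v_{m_n,n}$; as $\tau_0$ is a hat morphism, the case $n=0$ of the injectivity statement is automatic.

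The real work, and the main obstacle, is the injectivity of the concatenation map for $n\ge 1$, because \emph{unlike} under Property $(P_k)$ the morphism $\tau_n$ need not be injective on symbols: two letters with the same residue modulo $k+1$ and the same row of $A_n$ have equal images. I would therefore replace the ``injective on symbols'' step of Proposition~\ref{prop:conjugacy2} by an argument based on the block structure recorded in Remark~\ref{Rem:D_s}. Fix $y,y'\in X_{\BT}^{(n+1)}$ with $\tau_n(y)=\tau_n(y')$. Within a single $\tau_n(v_{i,n+1})$ the symbol $v_{m_n,n}$ occurs only in the terminal run $v_{m_n,n}^{\ell_{i,m_n}}$ and is never immediately followed by $v_{1,n}$, so $v_{m_n,n}v_{1,n}$ marks precisely the cutting points; hence the cutting points of $\tau_n(y)$ and $\tau_n(y')$ coincide and $\tau_n(y_i)=\tau_n(y'_i)$ for every $i$. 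Reading the pattern of doublings in the prefix of each block then identifies the class $D_{s_i,n+1}$ containing both $y_i$ and $y'_i$; in particular the distinguished prefix $v_{1,n}^3$ detects exactly the blocks equal to $\tau_n(v_{1,n+1})$, i.e. the positions with $s_i=0$.

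Next I would propagate equality to the right. By the last assertion of Remark~\ref{Rem:D_s}, applied at level $n+1$, any letter of $V_{n+1}$ is followed by at most one element of each class $D_{s,n+1}$; thus whenever $y_{i-1}=y'_{i-1}$, the letters $y_i,y'_i$, which lie in the common class $D_{s_i,n+1}$ and both follow $y_{i-1}$, must coincide. Hence the set of indices on which $y$ and $y'$ agree is closed under $i\mapsto i+1$, so the disagreement set is a left ray $(-\infty,M]$ (or empty). The point that closes the argument is that $D_{0,n+1}=\{v_{1,n+1}\}$ is a \emph{singleton}: at every index $i$ with $s_i=0$ we are forced to have $y_i=y'_i=v_{1,n+1}$. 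Since $X_{\BT}^{(n+1)}$ is minimal, $v_{1,n+1}$ occurs in $y$ with bounded gaps, hence at arbitrarily negative indices; such an agreement index cannot belong to a left ray of disagreement, so the disagreement set is empty and $y=y'$. With all three hypotheses in hand, Proposition~\ref{prop:conjugacy} gives the desired conjugacy of $(X_{\BT},S)$ with $(X_{B'},V_{B'})$.

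I expect the delicate step to be precisely the passage from ``$\tau_n(y_i)=\tau_n(y'_i)$ for all $i$'' to ``$y=y'$'', since this is where the failure of injectivity on symbols must be absorbed; it is the interplay of the one-sided determinism of Remark~\ref{Rem:D_s} with the singleton class $D_{0,n+1}$ and minimality that makes it go through. I would be careful to check that the single extra transition appearing at level $n+1$, namely the cutting-point transition $v_{m_{n+1},n+1}\to v_{1,n+1}$, does not spoil the ``at most one successor per class'' property used in the propagation (it does not, since $v_{m_{n+1},n+1}$ is otherwise only followed by itself, and $v_{1,n+1}$ lies in the distinct class $D_{0,n+1}$).
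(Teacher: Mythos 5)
Your proof is correct, and its skeleton is the paper's: reduce to Proposition~\ref{prop:conjugacy}, dispose of minimality and properness at once, show that the cutting points of $\tau_n(y)$ and $\tau_n(y')$ coincide so that $\tau_n(y_i)=\tau_n(y'_i)$ for all $i$, and then use the classes $D_{s,n+1}$ of Remark~\ref{Rem:D_s} to absorb the failure of injectivity on symbols. You diverge only in the direction of propagation in that last step. The paper anchors at the first cutting-point transition $v_{m_{n+1},n+1}v_{1,n+1}$ to the \emph{right} of a hypothetical disagreement, uses the singleton class $D_{0,n+1}$ to force $x_{m+1}=x'_{m+1}=v_{1,n+1}$, and then walks \emph{leftward} back to the disagreement via the observation that the positions where the symbol changes are the same in both sequences. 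You walk \emph{rightward}: agreement at $i-1$ together with the ``at most one follower per class'' clause of Remark~\ref{Rem:D_s} (applied at level $n+1$) forces agreement at $i$, and you seed this at the positions with $s_i=0$, where the singleton $D_{0,n+1}$ forces agreement outright and which minimality places arbitrarily far to the left. Both routes use the same two ingredients (the class structure and the singleton class), but yours only ever invokes the follower structure explicitly recorded in the Remark, whereas the paper's leftward walk implicitly needs the analogous statement about predecessors, and there $v_{1,n+1}$ admits the two distinct predecessors $v_{k+1,n+1}$ and $v_{m_{n+1},n+1}$, which can fall in the same class when $k+1$ divides $m_{n+1}$. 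Your forward version never meets this edge case, and your closing check that the cross-block transition $v_{m_{n+1},n+1}\to v_{1,n+1}$ does not violate the one-follower-per-class property is exactly the right thing to verify.
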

\begin{proof}
    As mentioned before, let $V_n=\{v_{1,n},\dots,v_{m_n,n}\}$ for $n\ge 1$. By Proposition~\ref{prop:conjugacy}, it is enough to prove that $\tau_n$ is proper for $n\ge 1$ and that $\tau_n$ extends by concatenation to an injective map from $X_{\BT}^{(n+1)}$ to $X_{\BT}^{(n)}$. Clearly, $\tau_n$ is proper. %For $x=(x_i)_{i\in\Z}\in X_{\BT}^{(n+1)}$, we define the cutting point of $\tau_n(x)$  to be the subset of integers defined (refer to Figure~\ref{fig:cutting}) as,
    %\[
   % C_{\tau_n}(x):=\{-|\tau_n(x_{[-\ell,0)})|: \ell>0\}\cup\{0\}\cup \{|\tau_n(x_{[0,\ell)})|: \ell>0\}.
   % \]

% For a sequence $x=(x_i)_{i\in\Z}\in X_{\BT}^{(n+1)}$, by Remark~\ref{Rem:D_s}, whenever $x_i\ne x_{i+1}$, there is no $0\le s\le k+1$ such that both $x_i$ and $x_{i+1}$ belong to $D_{s,n+1}$.% That is, $s(i)\ne s(i+1)$ if and only if $x_i\ne x_{i+1}$.

\begin{figure}[ht]
\begin{center}
\begin{tikzpicture}
\beginpgfgraphicnamed{situation-b}
\begin{scope}[very thick]

\draw[loosely dotted] (-1,0) -- (0,0);
\draw (0,.15)--(0,-.15) node[above=1mm]{$x_i=v_{k,n+1}$};
\draw  (0,0) --  (10.5,0) ; 
\draw[loosely dotted] (10.5,0)--(11.5,0) node[right]{$x$};;
\draw[ dotted] (0,0)-- (-1.5,-1.5);
\draw[dotted] (0,0)-- (1.5,-1.5);
\draw (3,.15)--(3,-.15) node[above=1mm]{$x_{i+1}=v_{k,n+1}$};
\draw[ dotted] (3,0)-- (1.5,-1.5);
\draw[dotted] (3,0)-- (4.5,-1.5);
\draw (6,.15)--(6,-.15) node[above=1mm]{$x_{j-1}=v_{m_{n+1},n+1}$};
\draw[ dotted] (6,0)-- (4.5,-1.5);
\draw[dotted] (6,0)-- (7.5,-1.5);
\draw (9,.15)--(9,-.15) node[above=1mm]{$x_{j}= v_{1,n+1}$};
\draw[ dotted] (9,0)-- (7.5,-1.5);
\draw[dotted] (9,0)-- (10.5,-1.5); 

\draw[loosely dotted] (-2.5,-1.5) -- (-1.5,-1.5);
\draw[red,snake=brace] (-1.5,-1.45)-- node[above] {$\tau_n(x_i)$} (1.5,-1.45) ;
\draw[red,snake=brace, mirror snake] (-1.5,-1.55)-- node[below] {$\tau_n(x'_i)$} (1.5,-1.55) ;

\draw[red,snake=brace] (1.5,-1.45)-- node[above] {$\tau_n(x_{i+1})$} (4.5,-1.45) ;
\draw[red,snake=brace, mirror snake] (1.5,-1.55)-- node[below] {$\tau_n(x'_{i+1})$} (4.5,-1.55) ;

\draw  (-1.5,-1.5) --  (11,-1.5) ;
\draw[red,snake=brace] (7.5,-1.45)-- node[above]{$\tau_n(v_{1,n+1})$} (10.5,-1.45);
\draw[blue,snake=brace,mirror snake] (7.5,-1.55)-- node[below]{$\tau_n(x'_j)$} (10.5,-1.55);
\draw[loosely dotted] (11,-1.5)--(11.5,-1.5) node[right]{\tiny{$\tau_n(x)=\tau_n(x')$}};

\draw[loosely dotted] (-1,-3) -- (0,-3);
\draw (0,-2.85)--(0,-3.15) node[below=1mm]{$x'_i=v_{\ell,n+1}$};
\draw  (0,-3) --  (10.5,-3) ; 
\draw[loosely dotted] (10.5,-3)--(11.5,-3) node[right]{$x'$};
\draw[ dotted] (0,-3)-- (-1.5,-1.5);
\draw[dotted] (0,-3)-- (1.5,-1.5);
\draw (3,-2.85)--(3,-3.15) node[below=1mm]{$x'_i=v_{\ell,n+1}$};
\draw[ dotted] (3,-3)-- (1.5,-1.5);
\draw[dotted] (3,-3)-- (4.5,-1.5);
\draw (6,-2.85)--(6,-3.15) node[below=1mm]{$x'_{j-1}\ne v_{m_{n+1},n+1}$};
\draw[ dotted] (6,-3)-- (4.5,-1.5);
\draw[dotted] (6,-3)-- (7.5,-1.5);
\draw (9,-2.85)--(9,-3.15) node[below=1mm]{$x'_j\ne v_{1,n+1}$};
\draw[ dotted] (9,-3)-- (7.5,-1.5);
\draw[dotted] (9,-3)-- (10.5,-1.5);
\end{scope}
\endpgfgraphicnamed
\end{tikzpicture}
\caption{Image of $x$ and $x'$ under $\tau_n$ where $x\ne x'$}
\label{fig:image}
\end{center}
\end{figure}
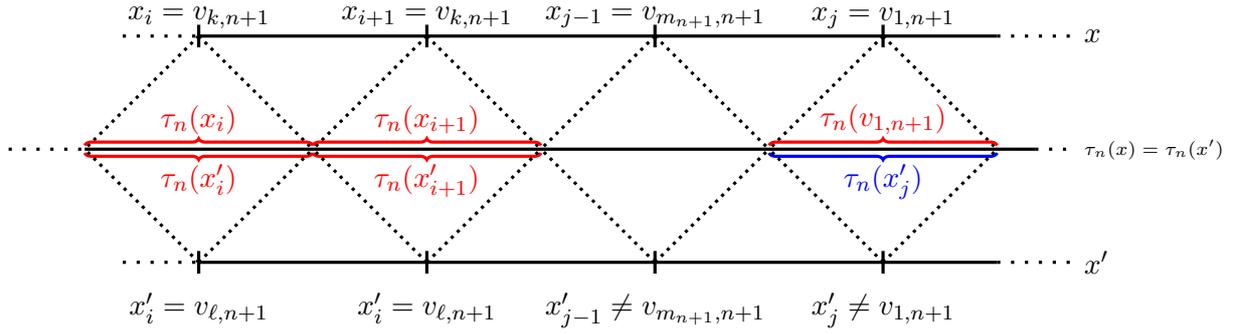

On contrary, we fix two distinct sequences $x=(x_i)_{i\in\Z},x'=(x'_i)_{i\in\Z}\in X_{\BT}^{(n+1)}$
such that $\tau_n(x)=\tau_n(x')$.    For a morphism as in Definition~\ref{def:almostlr}, the cutting points of $\tau_n(x)$ occur precisely where $v_{m_n,n}$ is followed by $v_{1,n}$.  Whenever $v_{m_n,n}$ is followed by $v_{1,n}$ in $\tau_n(x)$, the same happens in $\tau_n(x')$, and vice-versa. Hence, 
 $\tau_n(x_\ell)=\tau_n(x'_\ell)$ for all $\ell\in\Z$. 
By the definition of $\tau_n$, for each $\ell\in\Z$, there exists $0\le s(\ell)\le k+1$, such that $x_\ell,x'_\ell\in D_{s(\ell),n+1}$. By~\cref{Rem:D_s}, $s(\ell)=s(\ell+1)$ implies that $x_\ell=x_{\ell+1}$ and $x'_\ell=x'_{\ell+1}$. Hence, the change in symbols happens both in $x$ and $x'$ at the same time. 
    
Choose $i\in\mathbb{Z}$ where $x_i\ne x'_i$. Choose $m=\min_{j\ge i}\{x_jx_{j+1}=v_{m_{n+1},n+1}v_{1,n+1} \text{ or } x'_jx'_{j+1}=v_{m_{n+1},n+1}v_{1,n+1}\}$. Without loss of generality, assume that $x_mx_{m+1}=v_{m_{n+1},n+1}v_{1,n+1}$. Since $x_{m+1},x'_{m+1}\in D_{0,n+1}$, we have $x_{m+1}=x'_{m+1}=v_{1,n+1}$. Since the change of symbols in $x$ and $x'$ happen at the same time, we have $x_m=x'_m,\dots,x_i=x'_i$ which is a contradiction (Figure~\ref{fig:image}).

\end{proof}
%We say that two words $v,u$ are \emph{prefix dependent} if either $u$ is a prefix of $v$ or $v$ is a prefix of $u$. 
\noindent The following lemma is easily verifiable.
\begin{lemma}\label{lemma:prefix_indep1}
Let $\BT$ be a directive sequence of morphisms associated with a simple Bratteli diagram $(V,E)$ as given in Definition~\ref{def:almostlr}. Then for $n\ge 1$, the words $\tau_{[0,n)}(v_{i,n})$ and $\tau_{[0,n)}(v_{i+1,n})$ are not prefix dependent where $V_{n}=\{v_{1,n},\dots,v_{m_{n},n}\}$.
\end{lemma}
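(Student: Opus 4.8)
The plan is to follow the proof of Lemma~\ref{lemma:prefix_indep} almost verbatim, by induction on $n$, replacing the block structure of Property $(P_k)$ by the residue-class structure of the morphisms of Definition~\ref{def:almostlr}. For the base case $n=1$ we have $\tau_{[0,1)}=\tau_0$, which is a hat morphism, so $\tau_0(v_{i,1})$ and $\tau_0(v_{i+1,1})$ begin with distinct edges and are in particular not prefix dependent.

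The combinatorial core is the following claim, read off directly from Definition~\ref{def:almostlr}: for $n\ge 2$ and any $1\le i<m_n$, the words $\tau_{n-1}(v_{i,n})$ and $\tau_{n-1}(v_{i+1,n})$ share a common prefix $p$ over $V_{n-1}$ after which they continue with two distinct, index-consecutive letters $\{a,b\}=\{v_{t,n-1},v_{t+1,n-1}\}$ with $1\le t\le k$. Writing $i\equiv s$ and $i+1\equiv s+1\pmod{k+1}$, one checks this in three cases. In the generic case $i\ne 1$ and $1\le s\le k$, take $p=v_{1,n-1}^2\cdots v_{s-1,n-1}^2v_{s,n-1}$, after which $\tau_{n-1}(v_{i,n})$ has $v_{s+1,n-1}$ while $\tau_{n-1}(v_{i+1,n})$ has the second copy of $v_{s,n-1}$. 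In the wraparound case $s=k+1$ (so $i+1\equiv 1$, $i+1\ne 1$), the prefixes $v_{1,n-1}^2\cdots$ and $v_{1,n-1}v_{2,n-1}\cdots$ diverge after $p=v_{1,n-1}$, at $v_{1,n-1}$ versus $v_{2,n-1}$. Finally, in the special case $i=1$, the $v_{1,n-1}^3$ prefix of $\tau_{n-1}(v_{1,n})$ against the $v_{1,n-1}^2v_{2,n-1}$ prefix of $\tau_{n-1}(v_{2,n})$ forces divergence after $p=v_{1,n-1}^2$, again at $v_{1,n-1}$ versus $v_{2,n-1}$. In every case the two diverging letters are consecutive and lie among $v_{1,n-1},\dots,v_{k+1,n-1}$, and Remark~\ref{Rem:D_s} guarantees that no spurious earlier divergence occurs.

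For the inductive step I would apply $\tau_{[0,n-1)}$ and use that it respects concatenation: both $\tau_{[0,n)}(v_{i,n})$ and $\tau_{[0,n)}(v_{i+1,n})$ start with $\tau_{[0,n-1)}(p)$ and then continue with $\tau_{[0,n-1)}(a)$ and $\tau_{[0,n-1)}(b)$, respectively. If the two full words were prefix dependent, cancelling the common factor $\tau_{[0,n-1)}(p)$ would make $\tau_{[0,n-1)}(a)$ and $\tau_{[0,n-1)}(b)$ prefix dependent as well (the shorter image being a prefix of the longer); but these are $\tau_{[0,n-1)}(v_{t,n-1})$ and $\tau_{[0,n-1)}(v_{t+1,n-1})$ with $1\le t\le k$, which are not prefix dependent by the induction hypothesis, a contradiction. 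Note that the induction closes even though I prove the statement for all consecutive pairs, since the step only ever invokes the hypothesis for pairs with $t\le k$.

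I expect the only real obstacle to be the bookkeeping in the middle step: one has to make sure the two boundary configurations---the index $i=1$ with its $v_{1,n-1}^3$ prefix, and the wraparound from residue $k+1$ back to residue $1$---still produce a divergence between two consecutive letters inside the first $k+1$, so that the induction hypothesis genuinely applies; the remaining cases are a direct transcription of the argument for Lemma~\ref{lemma:prefix_indep}.
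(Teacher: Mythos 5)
The paper gives no proof of this lemma (it is declared ``easily verifiable''), and your induction is exactly the natural adaptation of the proof of Lemma~\ref{lemma:prefix_indep}: your three-case check of where $\tau_{n-1}(v_{i,n})$ and $\tau_{n-1}(v_{i+1,n})$ first diverge is correct in each case (always at two index-consecutive letters among $v_{1,n-1},\dots,v_{k+1,n-1}$, so the induction hypothesis applies), and the cancellation of the common factor $\tau_{[0,n-1)}(p)$ closes the argument. The appeal to Remark~\ref{Rem:D_s} is superfluous---the explicit prefixes already pin down the divergence position---but harmless.
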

\begin{lemma}\label{lemma:followertop}
Let $\BT=(\tau_n)_{n\ge 0}$ be a directive sequence of morphisms associated with a simple Bratteli diagram $(V,E)$ as in Definition~\ref{def:almostlr}  with $V_n=\{v_{1,n},\dots,v_{m_n,n}\}$ and $X_{\BT}^{(n)}$ be the associated $\mathcal{S}$-adic subshift at level $n$. %For a fixed $k\ge 0$, let  $X_k:=X_{\BT}^{(k)}$ be the $\mathcal{S}$-adic shift with respect to $(\tau_n)_{n\ge k}$. 
Then, for $x^{(n)}\in X_{\BT}^{(n)}$, if $v_{i,n}\in V_n$ is a signal for $x^{(n)}$, then $1\le i\le k$. Moreover, for $y^{(n)}\in X_{\BT}^{(n)}$, and $\ell\in\Z$ where $J(x^{(n)},y^{(n)})=(v_{i,n},\ell)$, we have $\{x^{(n)}_\ell,y^{(n)}_\ell\}=\{v_{i,n},v_{i+1,n}\}$.  
    
    %Then, the only extendable right special word of length 1 in $X_k$ is $v_1$ . In particular, for any $k>0,$ there exists a word $u$ of length $k$ where $uv_1v_1,uv_1v_2\in\mathcal{L}(X)$.
\end{lemma}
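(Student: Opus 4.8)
The plan is to follow the template of Lemma~\ref{lemma:suffix}, but to replace the appeal to injectivity on symbols (Remark~\ref{rem:prefix}), which is unavailable here, by the follower constraints on the classes $D_{s,n}$ recorded in Remark~\ref{Rem:D_s}. First I would localize the bifurcation inside a single $\tau_n$-block. Suppose $v_{i,n}$ is a signal for $x^{(n)}$, witnessed by $y^{(n)}$ and $\ell$ with $x^{(n)}_{(-\infty,\ell)}=y^{(n)}_{(-\infty,\ell)}$, $x^{(n)}_\ell\ne y^{(n)}_\ell$ and $x^{(n)}_{\ell-1}=v_{i,n}$. Since the cutting points of a sequence are exactly the occurrences of $v_{m_n,n}v_{1,n}$ and these are determined by the sequence, I would take the largest $m$ with $(m,m+1)$ a cutting point and $m+1\le \ell-1$; then $x^{(n)}$ and $y^{(n)}$ start a common $\tau_n$-block at position $m+1$, equal to $\tau_n(u)$ for $x^{(n)}$ and to $\tau_n(u')$ for $y^{(n)}$, where $u,u'$ are the corresponding letters of the $\tau_n$-factorizations $x^{(n+1)},y^{(n+1)}$ (which exist since the directive sequence is recognizable, cf.\ the proof of Proposition~\ref{prop:conjugacy3}, and Lemma~\ref{lemma:local}). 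Because $x^{(n)}_\ell\ne y^{(n)}_\ell$, the blocks $\tau_n(u)$ and $\tau_n(u')$ must already differ, so $\tau_n(u)\ne\tau_n(u')$ and in particular $u\ne u'$; moreover $x^{(n+1)}$ and $y^{(n+1)}$ agree strictly before this position, so $u$ and $u'$ are both immediate followers of one and the same letter $z\in V_{n+1}$.

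Next I would split according to the residue class modulo $k+1$. If $u\not\equiv u'$, their prefix patterns $v_{1,n}^2\cdots v_{s-1,n}^2 v_{s,n}\cdots v_{k+1,n}$ (with the $v_{1,n}^3$-variant for the letter $v_{1,n+1}$) already differ inside the doubled-then-single region. A direct inspection of these finitely many patterns shows that the longest common prefix of $\tau_n(u)$ and $\tau_n(u')$ ends at a letter $v_{i,n}$ with $1\le i\le k$, and that the two differing continuations are precisely $v_{i,n}$ and $v_{i+1,n}$. Matching the agreed portion $x^{(n)}_{[m+1,\ell-1]}$ with this common prefix identifies $v_{i,n}=x^{(n)}_{\ell-1}$ and forces $\{x^{(n)}_\ell,y^{(n)}_\ell\}=\{v_{i,n},v_{i+1,n}\}$, which is exactly the assertion, with $i\le k$.

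The remaining, and main, difficulty is the case $u\equiv u'$, which also subsumes the boundary situation in which the block $\tau_n(u)$ for $x^{(n)}$ terminates at position $\ell-1$ while that of $y^{(n)}$ runs past it (so that $\tau_n(u)$ is a proper prefix of $\tau_n(u')$): in either scenario the two blocks share the entire pattern region and can only diverge inside the suffix $v_{1,n}^{\ell_1}\cdots v_{m_n,n}^{\ell_{m_n}}$, which forces $u$ and $u'$ to have equal pattern region and hence the same residue class. Then $u\ne u'$ are two distinct letters lying in the same class $D_{s,n+1}$ that both follow $z$ in $X_{\BT}^{(n+1)}$ (the only block-boundary follower of $z$ would be $v_{1,n+1}\in D_{0,n+1}$, which is excluded as $u\ne u'$). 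This contradicts the last assertion of Remark~\ref{Rem:D_s} applied at level $n+1$, namely that no letter is followed by two distinct elements of the same $D_{s,n+1}$. Hence this case cannot occur, and the proof is complete. The crux throughout is that the failure of injectivity on symbols is repaired one level up: two genuinely distinct blocks sharing a long common prefix can only arise from two vertices in a single $D_s$-class, and those are forbidden to share a predecessor.
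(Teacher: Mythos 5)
Your argument is correct and follows essentially the same route as the paper's proof: locate the last cutting point before the bifurcation, lift to level $n+1$ to obtain two distinct letters $u\ne u'$ that follow a common letter $z$, and invoke the $D_{s,n+1}$-follower constraints of Remark~\ref{Rem:D_s} to exclude the case of coinciding pattern regions, so that the divergence must occur inside the pattern region, yielding $1\le i\le k$ and $\{x^{(n)}_\ell,y^{(n)}_\ell\}=\{v_{i,n},v_{i+1,n}\}$. You are in fact somewhat more explicit than the paper (the prefix-dependent boundary case and the special letter $v_{1,n+1}$), and you share with the paper the one lightly justified step, namely that the predecessor letter $z$ is literally the same in both $\tau_n$-factorizations even though $\tau_n$ is not injective on symbols.
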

\begin{proof}
% Let $A_n$ be the incidence matrix of $(V,E)$ at level $n$. 
%      Let $V_n=\{v_1,\dots,v_{m_n}\}$, $V_{n+1}=\{u_1,\dots,u_{m_{n+1}}\}$. 
    %and 
    % \[
    % M=\max_{u\in V_{n+1}}(\sum_{v\in V_n} (A_n)_{u,v})
    % \]
    % be the maximum of row sums of $A_n$. 
    If $v_{i,n}\in V_n$ is a signal for $x^{(n)}\in X_{\BT}^{(n)}$, then there exist $y^{(n)}$ and $\ell\in\Z$ such that $x^{(n)}_{(-\infty,\ell)}=y^{(n)}_{(-\infty,\ell)}$, $x^{(n)}_{\ell-1}=v_{i,n}$, and $x^{(n)}_{\ell}\ne y_{\ell}^{(n)}$. Choose a maximum of $m<\ell$ such that $x^{(n)}_{[m,\ell)}=v_{m_n,n}v_{1,n}\dots v_{m_n,n}v_{1,n}\dots v_{i,n}$. Since $X_{\BT}$ is recognizable and the cutting points of $\tau_n$ are precisely where $v_{m_n,n}$ is followed by $v_{1,n}$, we have $u$ and $u'\ne u''\in V_{n+1}$ such that $u$ is followed by $u'$ and $u''$ in $X^{(n+1)}_{\BT}$. 
    On the contrary, assume that $i>k$. In that case, $u',u''\in D_{s,n+1}$ for some $0\le s\le k+1$ which is not possible by Remark~\ref{Rem:D_s}. Clearly, for $1\le i\le k$, $v_{i,n}$ is followed by $v_{i,n}$ or $v_{i+1,n}$.
\end{proof}

\begin{remark}
By imitating the proof of Lemma~\ref{lemma:followertop}, it is not difficult to see that Lemmas~\ref{Lemma:splittingpoint},~\ref{Lemma:prefix} and~\ref{Lemma:twocomp} also hold in this setup. Hence, we can construct exactly $k$ many asymptotic components where the construction follows the proof of Theorem~\ref{thm:kmany} (it is stated below as Proposition~\ref{prop:kasymToep}). The construction of the subshift using the directive sequence of morphism as in Definition~\ref{def:almostlr} is good enough to prove Theorem~\ref{thm:kmany}. However, note that the construction using Property $(P_k)$ or $(P_\infty)$ allows us to construct infinitely many subshifts with the desired property - something that would not be possible if we were only using Definition~\ref{def:almostlr}. Hence, both constructions are significant in their own right. 
\end{remark}
\noindent We state the following result, the proof of which is omitted. 
\begin{proposition}\label{prop:kasymToep}
 Let $\BT=(\tau_n)_{n\ge 0}$ be a directive sequence of morphisms associated with a simple Bratteli diagram $(V,E)$ as in Definition~\ref{def:almostlr} and let $X_{\BT}$ be the associated $\mathcal{S}$-adic subshift. %Let $X_k$ be the $\mathcal{S}$-adic shift with respect to $(\tau_n)_{n\ge k}$ and $X_{\BT}=X_0$.
Then $X_{\BT}$ admits exactly $k$ asymptotic components. 
\end{proposition}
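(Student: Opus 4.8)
The plan is to follow the proof of Theorem~\ref{lemma:kasym} almost verbatim, since the morphisms of Definition~\ref{def:almostlr} were engineered to reproduce the same prefix combinatorics as Property $(P_k)$. First I would collect the structural tools that are available in this setup. By the remark preceding the statement, $X_{\BT}$ is recognizable (the cutting points of $\tau_n$ are exactly the occurrences of $v_{m_n,n}v_{1,n}$, and the desubstitution is pinned down using the sets $D_{s,n}$ of Remark~\ref{Rem:D_s}, which is precisely the point of the proof of Proposition~\ref{prop:conjugacy3}); the words $\tau_{[0,n)}(v_{i,n})$ and $\tau_{[0,n)}(v_{i+1,n})$ are never prefix dependent (Lemma~\ref{lemma:prefix_indep1}); a letter $v_{i,n}$ can be a signal only for $1\le i\le k$, and at the bifurcation point it is then followed by $v_{i,n}$ or $v_{i+1,n}$ (Lemma~\ref{lemma:followertop}); and the analogues of Lemma~\ref{Lemma:splittingpoint}, Lemma~\ref{Lemma:prefix} and Lemma~\ref{Lemma:twocomp} also hold, so in particular every asymptotic component of $X_{\BT}$ consists of exactly two elements.

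With these in hand I would construct $k$ asymptotic pairs $(x^i,y^i)$, $1\le i\le k$, by the same nested cylinder scheme: define $\alpha_{i,n}$ and the words $w_{i,n},u_{i,n}$ by the identical formulas, alternating $\tau_{[0,n)}(v_{1,n}^2\cdots v_{i-1,n}^2 v_{i,n}^2)$ and $\tau_{[0,n)}(v_{1,n}^2\cdots v_{i-1,n}^2 v_{i,n}v_{i+1,n})$ according to the parity of $n$. The only things to check are that these words lie in $\mathcal{L}(X_{\BT})$ and that the cylinders nest, and both follow from the prefix structure of Definition~\ref{def:almostlr}: the word $v_{1,n}^2\cdots v_{i,n}^2$ is a prefix of $\tau_n(v_{i+1,n+1})$ (taking $s=i+1$), while $v_{1,n}^2\cdots v_{i-1,n}^2 v_{i,n}v_{i+1,n}$ is a prefix of $\tau_n(v_{i,n+1})$ (taking $s=i$); this yields both membership in the language and the subword relations $w_{i,n}\prec w_{i,n+1}$ and $u_{i,n}\prec u_{i,n+1}$ that force nesting. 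Since $\alpha_{i,n}$ is chosen so that the common block $\tau_{[0,n)}(v_{1,n}^2\cdots v_{i-1,n}^2 v_{i,n})$ terminates exactly at coordinate $0$, the bifurcation lands at coordinate $0$, and the hat property of $\tau_0$ makes $\tau_0(v_{i,1})$ and $\tau_0(v_{i+1,1})$ start with distinct letters, so $x^i_0\ne y^i_0$ while $x^i_{(-\infty,0)}=y^i_{(-\infty,0)}$; thus $(x^i,y^i)\in\mathcal{AP}(X_{\BT})$.

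Next I would show these $k$ pairs lie in distinct asymptotic components. Since $\BT$ is primitive, $X_{\BT}$ is minimal, and the existence of a non-trivial asymptotic pair makes it aperiodic, so $x^i$ and $y^i$ never share an orbit. For $i\ne j$, the centered $\tau_{[0,n)}$-representation argument of Theorem~\ref{lemma:kasym} (choosing $n$ large and even so that $x^{(n)}_0=v_{i+1,n}$, computing $k_n=-\alpha_{i,n-1}$ and the block length $|\tau_{[0,n)}(v_{i+1,n})|$) contradicts uniqueness of the centered representation and rules out $x^i,x^j$, and likewise $x^i,y^j$, being in one orbit. Combined with the two-element property this gives at least $k$ distinct components.

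Finally, completeness is the analogue of Claim~\ref{Claim:1}: given any non-trivial asymptotic pair $(x,y)$, normalized so that $x_{(-\infty,0)}=y_{(-\infty,0)}$ and $x_0\ne y_0$, I would pass to $\tau_{[0,n)}$-factorizations and apply the splitting-point lemma to obtain $x^{(n)}_{-1}=v_{i,n}$ with $1\le i\le k$ and a common prefix of $\tau_{[0,n)}(v_{i,n})$ and $\tau_{[0,n)}(v_{i+1,n})$ of length $r(n)$; then $x_{[-s(n),0)}=x^i_{[-s(n),0)}$ with $s(n)=r(n)+|\tau_{[0,n)}(v_{i,n})|\to\infty$, whence $x\sim_{\text{Orb}} x^i$. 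This forces exactly $k$ components. The one genuinely new ingredient, and the only real obstacle, is that here $\tau_n$ need not be injective on symbols, so the clean desubstitution used throughout the $(P_k)$ argument must be replaced by the observation in Remark~\ref{Rem:D_s} that within any $\tau_n$-image no element of a class $D_{s,n}$ is immediately followed by a different element of the same class; this is exactly what makes the cutting points recoverable and keeps Lemma~\ref{lemma:followertop} together with the splitting-point, prefix, and two-element lemmas valid.
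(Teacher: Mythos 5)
Your proposal is correct and follows exactly the route the paper intends: the paper omits the proof of Proposition~\ref{prop:kasymToep}, stating in the preceding remark only that Lemmas~\ref{Lemma:splittingpoint}, \ref{Lemma:prefix} and \ref{Lemma:twocomp} carry over and that the construction follows the proof of Theorem~\ref{thm:kmany}, which is precisely what you do. Your expansion is faithful, and you correctly isolate the one genuinely new point (non-injectivity of $\tau_n$ on symbols, handled via the classes $D_{s,n}$ of Remark~\ref{Rem:D_s} and Lemma~\ref{lemma:followertop}).
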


 \subsection{Toeplitz subshifts of trivial automorphism group}
 %We use this property to prove the following result.

\begin{proof}[Proof of Theorem~\ref{thm:main2}]
By assumption, without loss of generality, we can assume that $(X,T)$ is a Toeplitz subshift. 
    Let $B=(V,E,\ge)$ be the simple ordered Bratteli diagram with respect to which the associated Bratteli-Vershik system is conjugate to $(X,T)$. By Theorem~\ref{thm:toeplitz}, the Bratteli diagram $(V,E)$ has equal path number property. Telescoping if necessary, we make sure that we can define the directive sequence of morphisms corresponding to $(V,E)$ as in Definition~\ref{def:almostlr}. 
    Since equal path number property is preserved under telescoping, the $\mathcal{S}$-adic subshift associated with the directive sequence of morphisms as in Definition~\ref{def:almostlr} is conjugate to a Toeplitz subshift. Furthermore, it has exactly $k$ asymptotic components. 
\end{proof}
We get the following corollary by taking $k=1$.
\begin{corollary}
     Let $(X,T)$ be a Toeplitz subshift. Then there exists a Toeplitz subshift $(X',S)$ that is strong orbit equivalent to $(X,T)$ and that $\text{Aut}(X',S)$ is generated by $S$.
\end{corollary}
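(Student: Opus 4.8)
The plan is to obtain this corollary as the direct specialization of Theorem~\ref{thm:main2} to the case $k=1$, combined with the link between asymptotic components and automorphism groups recorded in Proposition~\ref{prop:aut}. Since $(X,T)$ is itself a Toeplitz subshift, it is trivially strong orbit equivalent to a Toeplitz subshift (namely itself), so the hypotheses of Theorem~\ref{thm:main2} are satisfied without any additional work.

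First I would apply Theorem~\ref{thm:main2} with $k=1$. This produces a Toeplitz subshift $(X',S)$ that is strong orbit equivalent to $(X,T)$ and has exactly one asymptotic component. The only thing to verify here is that the Toeplitz construction of Section~\ref{Sec:Toeplitz} (built from the directive sequence of Definition~\ref{def:almostlr} and analyzed in Proposition~\ref{prop:kasymToep}) genuinely yields a single asymptotic component when $k=1$; this is precisely the content of Theorem~\ref{thm:main2} for this value of $k$, so nothing new needs to be proved.

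Next I would invoke Proposition~\ref{prop:aut}. A Toeplitz subshift is by definition minimal, so $(X',S)$ is a minimal subshift. Since $(X',S)$ has exactly one asymptotic component, the second assertion of Proposition~\ref{prop:aut} immediately gives that $\text{Aut}(X',S)$ is isomorphic to $\Z$; as the system is aperiodic, this isomorphism is realized by the powers of the shift, so $\text{Aut}(X',S)$ is generated by $S$. This completes the argument.

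The proof has essentially no hard step, since all the substantive work has already been carried out in establishing Theorem~\ref{thm:main2} and Proposition~\ref{prop:aut}. The only point that warrants a moment's care is checking that the single-asymptotic-component case does not degenerate, i.e.\ that $X'$ is genuinely infinite so that nontrivial asymptotic pairs exist and the unique asymptotic component is not vacuous. This is guaranteed by the primitivity and aperiodicity of the underlying $\mathcal{S}$-adic system, exactly as already exploited in the proof of Theorem~\ref{lemma:kasym}.
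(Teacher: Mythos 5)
Your proposal is correct and follows exactly the paper's route: the paper derives this corollary simply by taking $k=1$ in Theorem~\ref{thm:main2} and then invoking Proposition~\ref{prop:aut} (as in the deduction of Corollary~\ref{cor:aut}) to pass from a single asymptotic component to $\text{Aut}(X',S)$ being generated by $S$. Your added remarks about non-degeneracy and aperiodicity are sound but not needed beyond what the cited results already supply.
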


\begin{remark}
 Since within a strong orbit equivalence class of a Toeplitz shift, there is a Toeplitz shift of positive entropy, by Theorem~\ref{thm:main3}, we get a candidate for a Toeplitz shift having uncountably many asymptotic components. It is also not difficult to construct Toeplitz shift of countably infinite asymptotic components. However, we refrain from providing a full description to prevent redundancy. We would like to just mention the following directive sequence of morphism that works. For $n\ge 1$, we define $\tau_n:V_{n+1}^*\to V_n^*$ as, 
 \[
\tau_n(v_{i,n+1})=
\begin{cases}
v_{1,n}^2\dots v_{s-1,n}^2v_{s,n}\dots v_{n+1,n}v_{1,n}^{\ell_{i,1}}\dots v_{m_n,n}^{\ell_{i,m_n}}, & i\equiv s\text{ mod }n+1, 1\le s\le n+1,i\ne 1\\
  v_{1,n}^{3}v_{2,n}\dots v_{n+1,n}v_{1,n}^{\ell_{1,1}}\dots v_{m_n,n}^{\ell_{1,m_n}},& i=1,
\end{cases}
\] where $V_n=\{v_{1,n},\dots,v_{m_n,n}\}$, $\tau_0$ is a hat morphism and $\ell_{i,j}$'s are chosen accordingly so that the newly constructed subshift is strong orbit equivalent to the Toeplitz shift that we start with. 
\end{remark}

% \subsection{Things to do}
% \begin{enumerate}
%     \item When are two Bratteli systems having the same diagram but different ordering non-conjugate?
%     \item Can we construct subshifts of higher complexity?
% \end{enumerate}
% \addcontentsline{toc}{chapter}{References}
	\bibliographystyle{abbrv}   %bibliography style
	\renewcommand{\bibname}{References} %bibliography chapter header
	\bibliography{mybib} %.bib file source

\begin{thebibliography}{10}

\bibitem{BHR02}
F.~Blanchard, B.~Host, and S.~Ruette.
\newblock Asymptotic pairs in positive-entropy systems.
\newblock {\em Ergod. Theory Dyn. Syst.}, 22(3):671--686, 2002.

\bibitem{BH94}
M.~Boyle and D.~Handelman.
\newblock Entropy versus orbit equivalence for minimal homeomorphisms.
\newblock {\em Pacific J. Math.}, 164(1):1--13, 1994.

\bibitem{BLR88}
M.~Boyle, D.~Lind, and D.~Rudolph.
\newblock The automorphism group of a shift of finite type.
\newblock {\em Trans. Amer. Math. Soc.}, 306(1):71--114, 1988.

\bibitem{Boyle_Tomimaya_bounded_orb_eq:1998}
M.~Boyle and J.~Tomiyama.
\newblock Bounded topological orbit equivalence and {$C^*$}-algebras.
\newblock {\em J. Math. Soc. Japan}, 50(2):317--329, 1998.

\bibitem{CD20}
P.~Cecchi~Bernales and S.~Donoso.
\newblock Strong orbit equivalence and superlinear complexity.
\newblock {\em arXiv e-prints}, pages arXiv--2009, 2020.

\bibitem{CDP16}
M.~I. Cortez, F.~Durand, and S.~Petite.
\newblock Eigenvalues and strong orbit equivalence.
\newblock {\em Ergod. Theory Dyn. Syst.}, 36(8):2419--2440, 2016.

\bibitem{CQY16}
E.~Coven, A.~Quas, and R.~Yassawi.
\newblock Computing automorphism groups of shifts using atypical equivalence
  classes.
\newblock {\em Discrete Anal.}, feb 28 2016.

\bibitem{CK15}
V.~Cyr and B.~Kra.
\newblock The automorphism group of a shift of linear growth: beyond
  transitivity.
\newblock {\em Forum Math. Sigma}, 3:Paper No. e5, 27, 2015.

\bibitem{CK16}
V.~Cyr and B.~Kra.
\newblock The automorphism group of a minimal shift of stretched exponential
  growth.
\newblock {\em J. Mod. Dyn.}, 10:483--495, 2016.

\bibitem{DDMP16}
S.~Donoso, F.~Durand, A.~Maass, and S.~Petite.
\newblock On automorphism groups of low complexity subshifts.
\newblock {\em Ergod. Theory Dyn. Syst.}, 36(1):64--95, 2016.

\bibitem{DDMP17}
S.~Donoso, F.~Durand, A.~Maass, and S.~Petite.
\newblock On automorphism groups of toeplitz subshifts.
\newblock {\em Discrete Anal.}, 6 2017.

\bibitem{DDMP21}
S.~Donoso, F.~Durand, A.~Maass, and S.~Petite.
\newblock Interplay between finite topological rank minimal cantor systems,
  s-adic subshifts and their complexity.
\newblock {\em Trans. Amer. Math. Soc.}, 374(5):3453--3489, 2021.

\bibitem{Downa91}
T.~Downarowicz.
\newblock The choquet simplex of invariant measures for minimal flows.
\newblock {\em Isr. J. Math.}, 74:241--256, 1991.

\bibitem{DFM18}
F.~Durand, A.~Frank, and A.~Maass.
\newblock Eigenvalues of minimal cantor systems.
\newblock {\em J. Eur. Math. Soc.}, 21(3):727--775, 2018.

\bibitem{Durand22_book}
F.~Durand and D.~Perrin.
\newblock {\em Dimension groups and dynamical systems}, volume 196.
\newblock Cambridge University Press, 2022.

\bibitem{FT22}
J.~Frisch and O.~Tamuz.
\newblock Characteristic measures of symbolic dynamical systems.
\newblock {\em Ergod. Theory Dyn. Syst.}, 42(5):1655--1661, 2022.

\bibitem{Giordano_Putman_Skau_Topological_orbit_equiv_crossed_products:1995}
T.~Giordano, I.~F. Putnam, and C.~F. Skau.
\newblock Topological orbit equivalence and {$C^*$}-crossed products.
\newblock {\em J. Reine Angew. Math.}, 469:51--111, 1995.

\bibitem{GPS95}
T.~Giordano, I.~F. Putnam, and C.~F. Skau.
\newblock Topological orbit equivalence and {$C^*$}-crossed products.
\newblock {\em J. Reine Angew. Math.}, 469:51--111, 1995.

\bibitem{H69}
G.~A. Hedlund.
\newblock Endomorphisms and automorphisms of the shift dynamical system.
\newblock {\em Math. Systems Theory}, 3:320--375, 1969.

\bibitem{HPS92}
R.~H. Herman, I.~F. Putnam, and C.~F. Skau.
\newblock Ordered bratteli diagrams, dimension groups and topological dynamics.
\newblock {\em Int. J. Math.}, 3(06):827--864, 1992.

\bibitem{Ormes97}
N.~S. Ormes.
\newblock Strong orbit realization for minimal homeomorphisms.
\newblock {\em Journal d’Analyse Math{\'e}matique}, 71(1):103--133, 1997.

\bibitem{S17}
V.~Salo.
\newblock Toeplitz subshift whose automorphism group is not finitely generated.
\newblock {\em Colloq. Math.}, 146(1):53--76, 2017.

\bibitem{Sugisaki03}
F.~Sugisaki.
\newblock The relationship between entropy and strong orbit equivalence for the
  minimal homeomorphisms.
\newblock {\em Int. J. Math.}, 14(7), 2003.

\bibitem{Sugisaki07}
F.~Sugisaki.
\newblock On the subshift within a strong orbit equivalence class for minimal
  homeomorphisms.
\newblock {\em Ergod. Theory Dyn. Syst.}, 27(3):971--990, 2007.

\end{thebibliography}

\end{document}